\documentclass[11pt,reqno]{amsart}

\usepackage{amsmath,amssymb,amsthm,mathtools,mathrsfs}
\usepackage[T1]{fontenc}
\usepackage{lmodern}
\usepackage{microtype}
\usepackage[colorlinks=true,linkcolor=blue,citecolor=blue,urlcolor=blue]{hyperref}
\usepackage{enumitem}

\allowdisplaybreaks
\newcommand{\E}{\mathop{\textsf E}\nolimits}
\newcommand{\Pp}{\mathop{\textsf P}\nolimits}
\newcommand{\Var}{\operatorname{Var}}
\newcommand{\Cov}{\operatorname{Cov}}
\newcommand{\Tr}{\operatorname{Tr}}
\newcommand{\HS}{\mathrm{HS}}
\newcommand{\Law}{\mathcal L}
\newcommand{\cJ}{\mathcal J}
\newcommand{\cE}{\mathcal E}
\newcommand{\dd}{\,\mathrm d}

\newcommand{\norm}[1]{\left\lVert #1\right\rVert}
\newcommand{\ip}[2]{\left\langle #1,#2\right\rangle}
\newcommand{\R}{\mathbb R}

\newcommand{\Id}{I}

\theoremstyle{plain}
\newtheorem{theorem}{Theorem}[section]
\newtheorem{lemma}[theorem]{Lemma}
\newtheorem{proposition}[theorem]{Proposition}
\newtheorem{corollary}[theorem]{Corollary}
\theoremstyle{definition}

\theoremstyle{remark}
\newtheorem{remark}[theorem]{Remark}

\numberwithin{equation}{section}

\title[The optimal rate in the averaged random-marginal CLT]{The Optimal Rate in the Averaged Random-Marginal Central Limit Theorem for Log-Concave Measures}
\author{Xuanang Hu}
\address{Shandong University, Jinan, Shandong, China}
\date{}

\hypersetup{
 pdftitle={The Optimal Rate in the Averaged Random-Marginal Central Limit Theorem for Log-Concave Measures},
 pdfauthor={Xuanang Hu},
 pdfsubject={Random one-dimensional marginals of isotropic log-concave measures},
 pdfkeywords={log-concave measure, random marginal, Wasserstein distance, quadratic variance inequality, optimal rate}
}

\subjclass[2020]{52A40, 60F05, 60E15}
\keywords{Log-concave measure, random marginal, Wasserstein distance, quadratic variance inequality, explicit spherical integration, optimal rate}

\begin{document}

\begin{abstract}
Let $X$ be a centered isotropic log-concave random vector in $\R^n$. For
$\theta\in S^{n-1}$, let $\mu_\theta$ be the law of $\ip{X}{\theta}$, and let
$\Theta$ be uniformly distributed on $S^{n-1}$, independently of $X$. We
prove the sharp estimate
\[
 \E W_1(\mu_\Theta,\gamma_1)
 \le \frac{C}{n}.
\]
Here the Wasserstein distance is computed after the direction is fixed and is
then averaged over the sphere. No symmetry assumption is imposed. A product
measure with centered exponential coordinates gives a matching lower bound
of order $n^{-1}$.

The proof separates the averaged-direction law from the fluctuation among
fixed directions. For the first part, a Taylor expansion in the random radius
retains a mean-zero cancellation and yields an $O(n^{-1})$ error. For the
second, a weighted $L^2$ distance between distribution functions is converted
into an exact spherical kernel depending only on $|x|^2$, $|y|^2$, and
$\ip{x}{y}$. Expanding this kernel in $\ip{x}{y}$, we control its linear,
quadratic, and cubic terms using the quadratic variance inequality
$\Var(X^TMX)\le 8\Tr(M^2)$, while fixed-order moment estimates control the remainder.
\end{abstract}

\maketitle

\section{Introduction}

Let $X$ be a centered isotropic log-concave random vector in $\R^n$. Thus
\[
 \E X=0,
 \qquad
 \E XX^T=I_n.
\]
Since the covariance is nonsingular, the law of $X$ has a density of the
form $e^{-\mathcal V}$ on a convex set, where $\mathcal V$ is convex. We
write $\gamma_1$ for the standard Gaussian law on $\R$, and
$\sigma_{n-1}$ for the normalized rotation-invariant probability measure on
$S^{n-1}$. For probability measures $\mu$ and
$\nu$ on $\R$ with finite first moments, let $\Pi(\mu,\nu)$ denote the set
of probability measures on $\R^2$ with marginals $\mu$ and $\nu$, and define
\begin{equation}\label{eq:w1-definition}
 W_1(\mu,\nu)
 :=\inf_{\pi\in\Pi(\mu,\nu)}
 \int_{\R^2}|x-y|\,\dd\pi(x,y).
\end{equation}
Equivalently, the infimum may be taken over all pairs of random variables
$(U,V)$ with laws $\mu$ and $\nu$, in which case the cost is $\E|U-V|$.
For $\theta\in S^{n-1}$, write
\[
 \mu_\theta:=\Law_X\ip{X}{\theta}.
\]
Thus the direction is held fixed in $\mu_\theta$, and only $X$ is random.
Throughout, $\E$ and $\Pp$ denote expectation and probability. We omit a
subscript when the source of randomness is clear from the argument and use
one only when it prevents ambiguity. The central limit theorem for
log-concave measures asks whether the one-dimensional marginals $\mu_\theta$
are approximately Gaussian for most directions when $n$ is large.

The first geometric approach related Gaussian marginals to concentration of
the Euclidean norm.  In the symmetric-convex-body setting, suitable
concentration of $|X|$ was shown to imply Gaussian marginals in most
directions; see \cite[Theorem~4]{AnttilaBallPerissinaki}.  The qualitative
central limit theorem for general isotropic log-concave measures was proved
in total variation in \cite[Theorem~1.1]{KlartagCLT}.  It was followed by
power-law quantitative estimates in \cite[Theorem~1.4]{KlartagPower} and by
pointwise estimates for random higher-dimensional marginals in
\cite[Theorem~1]{EldanKlartag}.  A common feature of these results is that
radial concentration supplies the Gaussian reference law, while additional
arguments are needed to show that most fixed directions are close to that
reference law.

A sharper quantitative behavior is possible in randomized formulations. For
sums of independent variables, \cite[Theorem~1.1]{KlartagSodin} obtains an
$O(n^{-1})$ bound, in the interval metric, for most coefficient vectors under
a bounded average fourth moment. For dependent isotropic vectors, the
second-order correlation method of \cite[Theorem~1.1]{BCG2019} combines
concentration of smooth functions on the sphere with a normal-approximation
inequality. Write $F_\theta$ for the distribution function of $\mu_\theta$,
$\Phi$ for the standard Gaussian distribution function, and
$d_K(F,G)=\sup_t|F(t)-G(t)|$. In the symmetric case that method yields
\[
 \E d_K(F_\Theta,\Phi)
 \le C\Lambda\frac{\log n}{n};
\]
here $\Lambda$ is the second-order correlation constant; see also
\cite[Theorem~17.1]{BCG2023}. Without symmetry,
\cite[Theorem~17.2]{BCG2023} gives, under a spectral-gap assumption with
constant $\lambda_1$, the averaged bound
\[
 \E\int_\R
 \bigl(F_\Theta(t)-\Phi(t)\bigr)^2\,\dd t
 \le \frac{C}{\lambda_1^2n^2}.
\]
These results show that second-order information can improve the classical
$n^{-1/2}$ scale.  They do not, however, give the sharp averaged $W_1$ bound
for general, possibly non-symmetric, log-concave measures. The related
Wasserstein normal-approximation and random-projection literature is discussed
separately in Remark~\ref{rem:transport-clt}.

There are two sources of error in the quantity studied here.  If $X$ and
$\Theta$ are averaged simultaneously, the law of $\ip{X}{\Theta}$ depends
only on $|X|$; this is the radial part of the problem.  In contrast,
\[
 \E W_1(\mu_\Theta,\gamma_1)
\]
first fixes the direction, computes the distance, and only then averages.
It therefore also contains the fluctuation of the marginal law as the
direction changes.  Estimating the radial deviation by
$\E||X|^2-n|/n$ loses a square root and gives only $O(n^{-1/2})$.  The proof
below keeps the cancellation in the radial variable and treats the
directional fluctuation through one exact integral formula rather than by
estimating angular degrees separately.

The thin-shell conjecture was recently proved with a universal constant in
\cite[Theorem~1.1]{KlartagLehec}. Two contemporaneous works then identified
the sharp radial constant. Theorem~1.1 of \cite{ChenKlartag} proves
\[
 \Var(|X|^2)\le 8n,
\]
and Theorem~1.2 there proves the sharp third-moment tensor estimate
$\|\E X^{\otimes3}\|_{\mathrm{HS}}^2\le4n$.  On the other hand,
\cite[Theorem~1.2]{Letwin} proves the stronger quadratic-form inequality
\[
 \Var(X^TMX)\le 2\E|\nabla(X^TMX)|^2
 =8\Tr(M^2),
 \qquad M=M^T.
\]
The last equality uses $\nabla(X^TMX)=2MX$ and isotropy.  We use this full
matrix statement, rather than only its radial specialization.  Besides
$\Var(|X|^2)$, it controls a vector, a matrix, and a third-order tensor that
occur in the first three terms of the kernel expansion.  In particular, the
sharp thin-shell estimate and the sharp third-moment estimate from
\cite{ChenKlartag} are important parallel results, but they do not by
themselves give the matrix estimate required in the quadratic term below.

\begin{theorem}\label{thm:main}
There is a universal constant $C$ such that, for every $n\ge1$ and every
centered isotropic log-concave random vector $X\in\R^n$,
\begin{equation}\label{eq:main}
 \E W_1(\mu_\Theta,\gamma_1)
 \le \frac{C}{n}.
\end{equation}
\end{theorem}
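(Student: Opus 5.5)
We follow the two-part scheme from the introduction. Let $\bar\mu:=\E\mu_\Theta$ be the averaged-direction law, i.e.\ the law of $\ip{X}{\Theta}$ with $X$ and $\Theta$ independent. The triangle inequality gives
\[
 \E W_1(\mu_\Theta,\gamma_1)\le W_1(\bar\mu,\gamma_1)+\E W_1(\mu_\Theta,\bar\mu),
\]
and we bound the two terms separately by $C/n$. We may assume $n\ge n_0$, since for small $n$ the left side is bounded by $\E|\ip{X}{\Theta}|+\E|G|\le 2$.

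\emph{Radial part.} Write $\ip{X}{\Theta}\overset{d}{=}|X|\,\Theta_1$, where $\Theta_1$ is the first coordinate of $\Theta$, independent of $|X|$. We have $W_1(\Law(\sqrt n\,\Theta_1),\gamma_1)\le C/n$; this follows by an explicit comparison of the density $c_n(1-t^2/n)^{(n-3)/2}$ with the Gaussian density, whose discrepancy has a $1/n$ leading term. We then test against $1$-Lipschitz $f$:
\[
 \E f(R\,Z)-\E f(Z), \qquad R:=|X|/\sqrt n,\quad Z\sim\gamma_1 .
\]
Set $g(r):=\E f(rZ)$. Gaussian integration by parts gives $g'(r)=r\,\E f''(rZ)$ in the weak sense, so after smoothing $f$ we have $|g'(1)|\le 1$ and $|g''(r)|\lesssim 1$ uniformly for $r$ near $1$. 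Actually it is cleaner to expand in $s=R^2-1$: the map $h(s):=\E f(\sqrt{1+s}\,Z)$ satisfies $|h'(0)|\le C$ and $|h''(s)|\le C/(1+s)$ via the heat-equation representation. Since $\E s=0$, the linear term cancels exactly. This is the key mean-zero cancellation. The remainder is at most $C\,\E s^2=C\Var(|X|^2)/n^2\le 8C/n$ by \cite{ChenKlartag}. That is too weak by a factor $n$, so we redo the computation with the correct scaling: $\E s^2=\Var|X|^2/n^2\le 8/n$. Then the second-order term is $O(1/n)$, as required. On the tail $\{R^2\le 1/2\}$ we use the bound $\E|R Z|$ together with the log-concave small-ball estimate, which contributes $e^{-cn}$. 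Hence $W_1(\bar\mu,\gamma_1)\le C/n$.

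\emph{Fluctuation part.} We use $W_1(\mu,\nu)=\int|F-G|\dd t$ and Cauchy--Schwarz with the weight $w(t)=(1+t^2)$:
\[
 \E W_1(\mu_\Theta,\bar\mu)\le\Bigl(\int\frac{\dd t}{1+t^2}\Bigr)^{1/2}\Bigl(\E\int(F_\Theta-\bar F)^2(1+t^2)\dd t\Bigr)^{1/2}.
\]
It therefore suffices to show that $\E\int(F_\Theta-\bar F)^2(1+t^2)\dd t\le C/n^2$. Expanding the square with independent copies $X,Y$ gives
\[
 \E_\Theta\int(F_\Theta-\bar F)^2 w\,\dd t=\E_{X,Y}\bigl[K(X,Y)-\bar K(X,Y)\bigr],
\]
where $K(x,y)=\E_\Theta\int \mathbf 1\{\ip{x}\Theta\le t\}\mathbf 1\{\ip{y}\Theta\le t\}w\,\dd t$ and $\bar K$ is the same expression with independent directions. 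By rotation invariance, $K$ depends only on $|x|^2$, $|y|^2$ and $\ip{x}{y}$, and it can be computed explicitly from the law of $(\Theta_1,\Theta_2)$ on the sphere. Setting $\rho=\ip{x}{y}/(|x||y|)$, which is typically of size $n^{-1/2}$, we Taylor expand $K-\bar K$ in $\rho$ to third order. The zeroth-order term vanishes. The linear term is $a(|X|,|Y|)\ip{X}{Y}$; its expectation is $\|\E[a(|X|,\cdot)X]\|^2$-type, and we control $\E[(|X|^2-n)X]$ through $\Var(X^TMX)$ with rank-one $M$, which gives a vector of norm $O(1)$, hence a contribution $O(n^{-2})$ after the $n^{-1}$ normalisations in $a$. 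The quadratic term reduces to $\E\ip{X}{Y}^2$ weighted by radial factors, i.e.\ $\|\E[b(|X|)XX^T]-cI\|_{\HS}^2$. Here the full Letwin inequality $\Var(X^TMX)\le 8\Tr M^2$ is used, in duality over symmetric $M$, to show $\|\E[(|X|^2-n)XX^T]\|_{\HS}^2\le 8n$; this gives $O(n^{-2})$ after the $n^{-2}$ prefactor. The cubic term involves $\|\E X^{\otimes 3}\|_{\HS}^2\le 4n$ from \cite{ChenKlartag}, or equivalently Letwin with $M=\sum_i v_iE_i$, multiplied by an $n^{-3}$ coefficient. The fourth-order remainder is bounded by $\E|\rho|^4 n^{\,\cdot}$ using fixed-order moment bounds $\E\ip{X}{Y}^4\le Cn^2$ (Borell), which gives $O(n^{-2})$.

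\emph{Main obstacle.} The hardest step is deriving the exact kernel and verifying that its Taylor coefficients in $\rho$ carry the right powers of $n$, uniformly in the radii, including where $|x|$ or $|y|$ is far from $\sqrt n$. There we would cut off on $\{||X|^2-n|\le n/2\}$ and handle the complement by exponential concentration. We would first try computing $K$ through the Gaussian surrogate, replacing $\Theta$ by a standard Gaussian vector $G/\sqrt n$ up to $O(1/n)$ corrections, because the bivariate Gaussian orthant formula $\tfrac14+\tfrac{1}{2\pi}\arcsin\rho$ makes the expansion transparent.
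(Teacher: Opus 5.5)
Your overall architecture matches the paper's: the triangle inequality through $\bar\mu$, mean-zero cancellation in the radius, Cauchy--Schwarz with weight $1+t^2$, a kernel in $|x|^2,|y|^2,\ip xy$, and a third-order expansion controlled by $v=\E[(|X|^2-n)X]$, $B=\E[(|X|^2-n)XX^T]$ and $\E X^{\otimes3}$. Your radial part is a valid Gaussian-smoothing variant of the paper's argument. Write $G\sim\gamma_1$ for your Gaussian and state explicitly the coupling step $W_1(\Law(R\eta_n),\Law(RG))\le\E R\cdot W_1(\Law(\eta_n),\gamma_1)$. Then $|h(s)-h(0)-h'(0)s|\le Cs^2$ holds for all $s>-1$: use $|h''(s)|\le C(1+s)^{-3/2}$ for $|s|\le 1/2$, and $|h(s)-h(0)|\le|s|\,\E|G|$ otherwise. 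So the small-ball step can simply be dropped; note also that the tools you cite give $e^{-c\sqrt n}$ there, not $e^{-cn}$. The fluctuation part, however, has two genuine gaps.

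\emph{Power counting.} The claim that $v$ has norm $O(1)$ is false. For i.i.d.\ coordinates $E-1$ one has $v_k=\E\xi^3=2$, so $|v|=2\sqrt n$. The correct bound $|v|^2\le 8n$ comes from Cauchy--Schwarz and $\Var|X|^2\le 8n$, not from rank-one $M$. With $|v|^2\asymp n$, the normalisation you indicate no longer yields $n^{-2}$. In the quadratic term, $\norm B_{\HS}^2=O(n)$ times an $n^{-2}$ prefactor is $O(n^{-1})$, which gives only $\E W_1\lesssim n^{-1/2}$ after the square root. The mechanism that actually produces $n^{-2}$ is missing from the proposal. Write $Z=\ip XY$ and $Z^\circ=|X||Y|\ip UV$ with independent uniform $U,V$, and expand the radial coefficients $\ell(a,b)$, $q(a,b)$ of $z$ and $z^2$ to second order at $(n,n)$ in $\delta_a=|X|^2-n$, $\delta_b=|Y|^2-n$. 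All monomials in $a$ alone or in $b$ alone cancel exactly. For the linear term this is because $\E[f(a)Z]=0$; for the quadratic term it is because $\E[f(a)Z^2]=\E[f(a)a]=\E[f(a)(Z^\circ)^2]$ by isotropy and $\E b=n$. Only $\delta_a\delta_b$ survives, with coefficient $\partial_a\partial_b\ell=O(n^{-3})$, resp.\ $\partial_a\partial_b q=O(n^{-4})$. This gives $O(n^{-3})|v|^2=O(n^{-2})$ and $O(n^{-4})\bigl(\norm B_{\HS}^2-(\E\delta_a^2)^2/n\bigr)=O(n^{-3})$, with the cubic Taylor remainders in $(\delta_a,\delta_b)$ handled by moments. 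Your heuristics $\norm{\E[a(|X|,\cdot)X]}^2$ and $\norm{\E[b(|X|)XX^T]-cI}_{\HS}^2$ capture this only for product weights, whereas the true coefficients depend on $a+b$ and $(a-b)^2$.

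\emph{The kernel.} As written, $K(x,y)=\E_\Theta\int\mathbf 1\{\ip x\Theta\le t\}\mathbf 1\{\ip y\Theta\le t\}(1+t^2)\dd t=+\infty$; only $K-\bar K$ is finite. The Gaussian surrogate ``up to $O(1/n)$ corrections'' is too crude. The kernel is of order one, while the difference of its two averages must be $O(n^{-2})$, so the correction would itself need the same smooth $(a,b,z)$-structure and the same cancellations. The arcsin orthant formula also only sees the threshold $t=0$. The missing device is $\Psi(t)=t+t^3/3$, with $\Psi'=1+t^2$. It turns $\cJ$ into $\E|\Psi(U)-\Psi(V)|-\tfrac12\E|\Psi(U)-\Psi(U')|-\tfrac12\E|\Psi(V)-\Psi(V')|$, so the kernel becomes $\E_\Theta|\Psi(\ip x\Theta)-\Psi(\ip y\Theta)|$. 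The factorisation $\Psi(p)-\Psi(q)=(p-q)\bigl(1+\tfrac{p^2+pq+q^2}{3}\bigr)$ together with $\E[|\ip e\Theta|\Theta\Theta^T]=\tfrac{m_n}{n+1}(I_n+ee^T)$ then gives a closed form. From it one reads off $\ell=O(n^{-1})$, $q=O(n^{-2})$, $c_3=O(n^{-3})$, the gain of $n^{-1}$ per radial derivative, and the fourth $z$-derivative bound. That last bound contains negative powers of $|x-y|^2$, so besides the radial cutoff you also need $|X-Y|^2\ge\alpha_1 n$ outside a negligible event; the paper gets this by applying Gu\'edon--Milman to $(X-Y)/\sqrt2$.
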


The order in Theorem~\ref{thm:main} cannot be improved for all isotropic
log-concave laws.

\begin{theorem}\label{thm:lower}
There is a universal constant $c>0$ such that, for every sufficiently large
$n$, one can find a centered isotropic log-concave random vector
$X^{(n)}\in\R^n$ satisfying
\begin{equation}\label{eq:lower-main}
 \E W_1\bigl(\Law_{X^{(n)}}\ip{X^{(n)}}{\Theta},\gamma_1\bigr)
 \ge \frac{c}{n}.
\end{equation}
One may take $X^{(n)}$ to have independent coordinates, each distributed as
$E-1$, where $E$ has the exponential distribution with mean one.
\end{theorem}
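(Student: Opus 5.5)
The plan is to bound $W_1$ from below by a single $1$-Lipschitz test function for which the law of $\ip{X}{\theta}$ is explicitly computable. Take $X=(E_1-1,\dots,E_n-1)$ with $E_i$ i.i.d.\ exponential of mean one. This vector is centered and isotropic, and it is log-concave because its density $\prod_i e^{-(x_i+1)}\mathbf 1_{\{x_i>-1\}}$ is a product of log-concave densities. By Kantorovich--Rubinstein duality, for every $1$-Lipschitz $f$,
\[
 W_1(\mu_\theta,\gamma_1)\ge \bigl|\E f(\ip{X}{\theta})-\E f(G)\bigr|,\qquad G\sim\gamma_1 .
\]

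I will use $f=\cos$. It is even, which kills the odd (skewness) contribution at first order; this matters because $p_3(\theta):=\sum_i\theta_i^3$ has mean zero over the sphere. The characteristic function of $E-1$ is $\varphi(t)=e^{-it}/(1-it)$. Hence $\log\varphi(t)=\sum_{k\ge2}(it)^k/k$ for $|t|<1$. Writing $p_k=\sum_i\theta_i^k$, so that $p_2=1$, we get, whenever $\max_i|\theta_i|<1$,
\[
 \E\cos\ip{X}{\theta}-e^{-1/2}
 =e^{-1/2}\Bigl(e^{\operatorname{Re}L}\cos(\operatorname{Im}L)-1\Bigr),
\]
where
\[
 \operatorname{Re}L=\tfrac{p_4}{4}-\tfrac{p_6}{6}+\dots,
 \qquad
 \operatorname{Im}L=-\tfrac{p_3}{3}+\tfrac{p_5}{5}-\dots .
\]
If $m:=\max_i|\theta_i|\le 1/4$, the tails are dominated by geometric series:
\[
 \operatorname{Re}L\ge p_4\Bigl(\tfrac14-Cm^2\Bigr)\ge \tfrac{p_4}{8},
 \qquad
 |\operatorname{Im}L|\le |p_3|+Cm^2p_3' ,
\]
where $p_3':=\sum_i|\theta_i|^3\le m$. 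Using $e^a\cos b\ge (1+a)(1-b^2/2)$, the difference is at least
\[
 e^{-1/2}\Bigl(\tfrac{p_4}{8}-C\bigl(p_3^2+m^4p_3'^2\bigr)\Bigr).
\]

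The remaining step is probabilistic: showing that, with probability at least $1/2$ over $\Theta$, the positive term $p_4/8$ dominates. Represent $\Theta=g/|g|$ with $g$ standard Gaussian in $\R^n$. Then:
\begin{itemize}
\item $|g|^2\in[n/2,2n]$ with probability tending to $1$;
\item $\sum_i g_i^4\ge n$, since its mean is $3n$ and its variance is $O(n)$;
\item $\max_i|g_i|\le 2\sqrt{\log n}$ with probability tending to $1$;
\item $\bigl|\sum_i g_i^3\bigr|\le A\sqrt n$ with probability at least $0.9$ for a suitable constant $A$, since $\E\bigl(\sum_i g_i^3\bigr)^2=15n$.
\end{itemize}
On the intersection of these events we have
\[
 p_4\ge \frac{1}{4n},\qquad
 p_3^2\le \frac{8A^2}{n^2},\qquad
 m\lesssim\sqrt{\frac{\log n}{n}},\qquad
 p_3'\lesssim \frac{\log^{3/2}n}{\sqrt n}.
\]
Consequently $m^4p_3'^2=O\bigl(\log^5 n/n^3\bigr)$. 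For large $n$ the bracket is therefore at least $\frac{1}{32n}-O(n^{-2})\ge\frac{1}{64n}$. Averaging the duality bound over $\Theta$ gives
\[
 \E W_1(\mu_\Theta,\gamma_1)\ge \tfrac12\, e^{-1/2}\,\frac{1}{64n},
\]
which is \eqref{eq:lower-main}.

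The step that needs the most care is the one just described: making sure that the skewness term $p_3^2$, which is always nonnegative and can only hurt because of the $-b^2/2$ in the cosine, is of order $n^{-2}$ rather than merely bounded by $p_4$ through Cauchy--Schwarz. The concentration of $\sum_i g_i^3$ around $0$ at scale $\sqrt n$ handles exactly this. Everything else is an exact computation with an explicit characteristic function.
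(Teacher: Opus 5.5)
Your argument is correct, and it takes a different route from the paper's. The paper tests against the odd function $h_t(x)=\sin(tx)/t$ at a small fixed frequency $t$, so it detects the \emph{third} cumulant. Pointwise it gets $W_1(\mu_\theta,\gamma_1)\ge c_0t^2|\alpha_3(\theta)|-C_0t^3\beta_4(\theta)-C_0t^8|\alpha_3(\theta)|^3$. Two features of this bound force extra work:
\begin{itemize}
\item $\alpha_3(\Theta)=\sum_j\Theta_j^3$ has mean zero, so the paper computes $\E\alpha_3^2$ and $\E\alpha_3^4$ exactly and interpolates between $L^1$, $L^2$ and $L^4$ to get $\E|\alpha_3(\Theta)|\ge c/n$;
\item the error $t^3\beta_4$ is of the same order $n^{-1}$ as the main term, so $t$ must be taken small, independently of $n$.
\end{itemize}
In return, the small frequency makes $|t\theta_j|\le 1/2$ automatic. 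The expansion then holds for every $\theta$, and everything reduces to exact spherical moments with no exceptional sets.

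Your even test function $\cos$ at unit frequency instead detects the \emph{fourth}-cumulant term $p_4/4$. This term has a definite sign and is of order $n^{-1}$ for typical $\theta$, while skewness enters only through $p_3^2=O(n^{-2})$. So you need neither a small parameter nor a lower bound on the $L^1$ norm of a mean-zero quantity. You pay instead with a good event, built via $\Theta=g/|g|$, that guarantees $m\le 1/4$ and the typical sizes of $p_3$ and $p_4$.

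What you gain is a stronger conclusion. You get $W_1(\mu_\theta,\gamma_1)\ge c/n$ on a set of directions of measure at least $0.9-o(1)$, not merely on average. Moreover, only the even part of $\log\varphi$ is used, so the same idea works for symmetric coordinates such as the unit-variance Laplace law. There $\operatorname{Im}L\equiv0$, and $p_4\ge 1/n$ always, so one even gets $c/n$ for every direction. The $n^{-1}$ rate is therefore sharp already among symmetric log-concave measures, which the paper's skewness-based test function cannot detect.

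Two small points. First, the inequality $e^a\cos b\ge(1+a)(1-b^2/2)$ requires $1-b^2/2\ge0$. This holds because $|\operatorname{Im}L|\le m+Cm^3<1$ when $m\le 1/4$, and you should say so. Second, $p_3'\le m$ already gives $p_3'\le C\sqrt{\log n/n}$, although your weaker bound suffices.
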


\begin{remark}[Wasserstein normal approximation and random projections]
\label{rem:transport-clt}
The $W_1$ distance is a standard metric in quantitative normal approximation.
For a systematic treatment by Stein's method, including independent sums and
several dependence structures, see \cite{ChenGoldsteinShao}. For sums of
independent real random variables, Rio obtained transport-distance bounds for
$1\le p\le2$ in \cite[Theorem~4.1 and Corollary~4.2]{Rio}; Bobkov extended
the corresponding Berry--Esseen inequality to every $p\ge1$ in
\cite[Theorem~1.1]{BobkovTransport}. Extensions to locally dependent random
variables are developed in \cite{LiuAustern}.

Closer to the present geometric setting, Bobkov studied Sudakov's typical
distributions in the Kantorovich--Rubinstein distance, equivalently $W_1$, in
\cite{BobkovSudakov}. Meckes treated random projections in
\cite{MeckesProjection}. A distinction in the latter work is important here:
its Wasserstein estimate concerns the law obtained after the vector and the
direction are averaged together, whereas its fixed-direction results are
formulated in the bounded-Lipschitz distance; see
\cite[Theorems~4--6]{MeckesProjection}. Our quantity
$\E W_1(\mu_\Theta,\gamma_1)$ instead computes $W_1$ for each fixed direction
before averaging over the sphere. Thus the cited results provide the relevant
Wasserstein and random-projection background, but do not yield
Theorem~\ref{thm:main} or its sharp $n^{-1}$ rate.
\end{remark}

\begin{corollary}\label{cor:typical}
For every $0<\delta<1$, all but a set of spherical measure at most $\delta$
of directions satisfy
\[
 W_1(\mu_\theta,\gamma_1)\le \frac{C}{\delta n}.
\]
\end{corollary}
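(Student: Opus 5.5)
This follows from Theorem~\ref{thm:main} together with Markov's inequality applied to the nonnegative random variable $W_1(\mu_\Theta,\gamma_1)$, where $\Theta$ is distributed according to $\sigma_{n-1}$.

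First, the function $\theta\mapsto W_1(\mu_\theta,\gamma_1)$ is measurable on $S^{n-1}$. Indeed, it is continuous. For $\theta,\theta'\in S^{n-1}$, coupling $\ip{X}{\theta}$ with $\ip{X}{\theta'}$ through the same $X$ gives
\[
 W_1(\mu_\theta,\mu_{\theta'})\le \E|\ip{X}{\theta-\theta'}|\le \bigl(\E\ip{X}{\theta-\theta'}^2\bigr)^{1/2}=|\theta-\theta'|,
\]
where the last equality uses isotropy. By the triangle inequality, the map is therefore $1$-Lipschitz.

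Let $C$ be the constant of Theorem~\ref{thm:main}, and set
\[
 A_\delta:=\Bigl\{\theta\in S^{n-1}: W_1(\mu_\theta,\gamma_1)>\frac{C}{\delta n}\Bigr\}.
\]
Markov's inequality and \eqref{eq:main} give
\[
 \sigma_{n-1}(A_\delta)=\Pp\Bigl(W_1(\mu_\Theta,\gamma_1)>\frac{C}{\delta n}\Bigr)\le \frac{\delta n}{C}\,\E W_1(\mu_\Theta,\gamma_1)\le \frac{\delta n}{C}\cdot\frac{C}{n}=\delta.
\]
Every direction outside $A_\delta$ satisfies $W_1(\mu_\theta,\gamma_1)\le C/(\delta n)$, which is the claim, with the same universal constant $C$ as in Theorem~\ref{thm:main}.

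No step here is a real obstacle: all of the substance lies in Theorem~\ref{thm:main}. The only point to confirm is measurability, and the Lipschitz bound above settles it.
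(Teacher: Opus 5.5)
Your proof is correct and matches the paper's argument: Markov's inequality applied to $W_1(\mu_\Theta,\gamma_1)$ at level $C/(\delta n)$, combined with the bound $\E W_1(\mu_\Theta,\gamma_1)\le C/n$ from Theorem~\ref{thm:main}. Your added check that $\theta\mapsto W_1(\mu_\theta,\gamma_1)$ is $1$-Lipschitz, via the common coupling through $X$ and isotropy, is a valid detail that the paper leaves implicit.
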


\begin{proof}
Set $D(\theta)=W_1(\mu_\theta,\gamma_1)$. Theorem~\ref{thm:main} gives
$\E D(\Theta)\le C/n$. Markov's inequality says that, for every
$a>0$,
\[
 \Pp\{D(\Theta)>a\}\le\frac{\E D(\Theta)}{a}.
\]
Taking $a=C/(\delta n)$ gives
\[
 \Pp\left\{D(\Theta)>\frac{C}{\delta n}\right\}
 \le \frac{\delta n}{C}\E D(\Theta)
 \le\delta.
\]
\end{proof}

We summarize the new points of the argument. Let
\[
 \bar\mu:=\int_{S^{n-1}}\mu_\theta\,\dd\sigma_{n-1}(\theta)
\]
be the law obtained after averaging the direction. First, we prove
\[
 W_1(\bar\mu,\gamma_1)\le \frac Cn.
\]
The key point is that the linear term in the random scale has mean zero; one
must use this cancellation before taking absolute values.

Second, for the variation in the direction we introduce
\[
 \cJ(\mu,\nu)
 =\int_\R(1+t^2)(F_\mu(t)-F_\nu(t))^2\,\dd t.
\]
The change of variables $\Psi(t)=t+t^3/3$ turns this integral into an exact
expectation involving $|\Psi(u)-\Psi(v)|$.  Spherical averaging then produces
an explicit kernel depending only on
\[
 |x|^2,\qquad |y|^2,\qquad \ip{x}{y}.
\]
We compare the original pair with a pair having the same radii and independent
uniform directions.  The constant term cancels because the radii are the
same.  The linear, quadratic, and cubic terms are controlled, respectively,
by
\[
 \E[(|X|^2-n)X],\qquad
 \E[(|X|^2-n)XX^T],\qquad
 \E X^{\otimes3}.
\]
The quadratic-form inequality bounds all three objects at exactly the scale
needed for an $O(n^{-2})$ bound on the squared distribution-function error.
After taking a square root, this gives the desired $O(n^{-1})$ Wasserstein
bound.  Finally, an explicit product measure gives a matching lower bound.
Thus the proof both identifies the optimal rate and explains why the full
quadratic-form estimate, rather than a radial thin-shell estimate alone, is
the relevant input.

\begin{remark}\label{rem:conjectures}
Theorem~\ref{thm:main} does not prove the Kannan--Lov\'asz--Simonovits
conjecture. It uses the quadratic-form estimate
\cite[Theorem~1.2]{Letwin}, while the KLS conjecture asks for a
dimension-free Poincar\'e inequality for every locally Lipschitz function.
The same external work proves $\psi_n\le C\log^{1/4}n$ in
\cite[Theorem~1.1]{Letwin}.

Taking $M=I_n$ in the quadratic-form estimate gives
\[
 \Var(|X|^2)\le8n.
\]
This sharp estimate is also the conclusion of
\cite[Theorem~1.1]{ChenKlartag}.  Moreover,
\cite[Theorem~1.2]{ChenKlartag} gives the sharper tensor bound
$\|\E X^{\otimes3}\|_{\mathrm{HS}}^2\le4n$.  We retain the short deductions
from the full quadratic-form inequality because the same input is also
needed for the matrix $B$ in Lemma~\ref{lem:B}.  Indeed,
\[
 \bigl||X|-\sqrt n\bigr|
 =\frac{\bigl||X|^2-n\bigr|}{|X|+\sqrt n}
 \le\frac{\bigl||X|^2-n\bigr|}{\sqrt n}.
\]
After squaring and taking expectation, we obtain
\[
 \E(|X|-\sqrt n)^2
 \le \frac1n\E(|X|^2-n)^2
 \le8.
\]
Thus a dimension-free thin-shell bound is already part of the external
input. The new conclusion here is the sharp averaged $W_1$ rate for random
one-dimensional projections. The argument does not remove the logarithm
from the known averaged Kolmogorov estimate, does not give an $n^{-1}$ bound
for every fixed direction, and does not imply the full KLS conjecture.
\end{remark}

Section~\ref{sec:basic} records the one-dimensional and spherical
formulas used throughout, and Section~\ref{sec:lower} proves sharpness.  We
then collect the external estimates and prove the radial part of the upper
bound.  The final two sections derive the exact spherical kernel and estimate
its Taylor expansion.  Algebraic differentiations are placed in the
appendix.

\section{Two basic formulas}\label{sec:basic}

We record two standard formulas that will be used several times.

\begin{lemma}\label{lem:w1-line}
Let $\mu$ and $\nu$ be probability measures on $\R$ with finite first
moments, and let
\[
 Q_\mu(s)=\inf\{t:F_\mu(t)\ge s\},
 \qquad 0<s<1,
\]
with the analogous definition for $Q_\nu$. Then
\begin{equation}\label{eq:w1-cdf}
 W_1(\mu,\nu)
 =\int_0^1|Q_\mu(s)-Q_\nu(s)|\,\dd s
 =\int_\R|F_\mu(t)-F_\nu(t)|\,\dd t.
\end{equation}
Moreover, if $h:\R\to\R$ is one-Lipschitz, then
\begin{equation}\label{eq:w1-test}
 \left|\int h\,\dd\mu-\int h\,\dd\nu\right|
 \le W_1(\mu,\nu).
\end{equation}
\end{lemma}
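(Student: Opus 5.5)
The plan is to use the quantile coupling for the upper bound, a layer-cake (Fubini) identity for the matching lower bound, and duality for \eqref{eq:w1-test}.

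\emph{Quantile coupling.} Let $U$ be uniform on $(0,1)$. By the standard property of the generalized inverse, $Q_\mu(s)\le t$ if and only if $s\le F_\mu(t)$. Hence $Q_\mu(U)$ has law $\mu$, and likewise $Q_\nu(U)$ has law $\nu$. The pair $(Q_\mu(U),Q_\nu(U))$ is therefore admissible in \eqref{eq:w1-definition}, which gives
\[
 W_1(\mu,\nu)\le\int_0^1|Q_\mu(s)-Q_\nu(s)|\,\dd s .
\]

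\emph{Fubini identity.} Next I would show
\[
 \int_0^1|Q_\mu-Q_\nu|\,\dd s=\int_\R|F_\mu-F_\nu|\,\dd t .
\]
Write $|a-b|=\int_\R|\mathbf 1\{a\le t\}-\mathbf 1\{b\le t\}|\,\dd t$ and integrate over $s$. For fixed $t$, the set of $s$ with $Q_\mu(s)\le t$ is $(0,F_\mu(t)]$, and similarly for $\nu$. The integrand in $s$ is the indicator of the symmetric difference of these two intervals, whose measure is $|F_\mu(t)-F_\nu(t)|$. Tonelli's theorem justifies exchanging the integrals, since everything is nonnegative.

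\emph{Lower bound.} Take any coupling $(X,Y)$ with laws $\mu$ and $\nu$. The same layer-cake representation gives
\[
 \E|X-Y|=\int_\R\E\bigl|\mathbf 1\{X\le t\}-\mathbf 1\{Y\le t\}\bigr|\,\dd t .
\]
For each $t$, $\E|\mathbf 1\{X\le t\}-\mathbf 1\{Y\le t\}|\ge|F_\mu(t)-F_\nu(t)|$. Taking the infimum over couplings yields $W_1\ge\int|F_\mu-F_\nu|$. This closes the chain of inequalities and proves \eqref{eq:w1-cdf}.

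\emph{Proof of \eqref{eq:w1-test}.} For any coupling $(X,Y)$, $h(X)$ and $h(Y)$ are integrable because $|h(x)|\le|h(0)|+|x|$ and first moments are finite. Then
\[
 \Bigl|\int h\,\dd\mu-\int h\,\dd\nu\Bigr|=|\E[h(X)-h(Y)]|\le\E|X-Y| ,
\]
and taking the infimum over couplings gives the claim.

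The only delicate point is the equivalence $Q_\mu(s)\le t\iff s\le F_\mu(t)$, which needs right-continuity of $F_\mu$ and the fact that the infimum defining $Q_\mu$ is attained. Finite first moments guarantee that all the integrals involved are finite.
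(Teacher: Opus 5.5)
Your proof is correct, and it is more self-contained than the paper's. The paper proves only \eqref{eq:w1-test}, by exactly your coupling argument. For \eqref{eq:w1-cdf} it cites the monotone transport theorem (Villani, Theorem~2.18) for the first equality and Vallender's theorem for the second.

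You instead derive both equalities from the single identity $|a-b|=\int_\R|\mathbf 1\{a\le t\}-\mathbf 1\{b\le t\}|\,\dd t$. Applied to the quantile coupling, it gives $\int_0^1|Q_\mu-Q_\nu|\,\dd s=\int_\R|F_\mu-F_\nu|\,\dd t$. Applied to an arbitrary coupling, together with $\E|\mathbf 1\{X\le t\}-\mathbf 1\{Y\le t\}|\ge|F_\mu(t)-F_\nu(t)|$, it gives the lower bound $W_1\ge\int_\R|F_\mu-F_\nu|\,\dd t$. The chain $\int|F_\mu-F_\nu|\le W_1\le\int|Q_\mu-Q_\nu|=\int|F_\mu-F_\nu|$ then closes without any optimal-transport machinery; you never need attainment of the infimum or cyclical monotonicity. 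The generalized-inverse equivalence you flag is indeed the only point needing care.

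The price is generality. Your lower-bound step uses that the cost $|x-y|$ is an integral of indicator differences, so it is specific to $p=1$. The cited transport theorem gives optimality of the monotone coupling for every convex cost $c(x-y)$, a generality the paper never uses. Your key identity is also the one the paper itself uses later in the proof of Lemma~\ref{lem:cdf-identity}, so your route fits well with the rest of the argument.
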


\begin{proof}
The first equality in \eqref{eq:w1-cdf} is the one-dimensional monotone
transport formula \cite[Theorem~2.18]{VillaniTopics}.  The second equality
is the main result of \cite{Vallender}.

For \eqref{eq:w1-test}, let $\pi\in\Pi(\mu,\nu)$. Since $h$ is
one-Lipschitz,
\[
 \left|\int h\,\dd\mu-\int h\,\dd\nu\right|
 =\left|\int_{\R^2}(h(x)-h(y))\,\dd\pi(x,y)\right|
 \le\int_{\R^2}|x-y|\,\dd\pi(x,y).
\]
Taking the infimum over $\pi$ proves the claim.
\end{proof}

\begin{lemma}\label{lem:sphere-basic}
Let $n\ge2$, let $G=(G_1,\ldots,G_n)$ have independent standard Gaussian
coordinates, and put $U=G/|G|$. Then $U$ is uniform on $S^{n-1}$ and is
independent of $|G|$. If $k_1,\ldots,k_r$ are nonnegative integers and
$K=k_1+\cdots+k_r$, then
\begin{equation}\label{eq:sphere-even-moments}
 \E\prod_{j=1}^rU_j^{2k_j}
 =\frac{\prod_{j=1}^r(2k_j-1)!!}
 {n(n+2)\cdots(n+2K-2)}.
\end{equation}
Every moment containing an odd power of one coordinate is zero. Also,
$U_1$ has density
\begin{equation}\label{eq:U1-density}
 c_n(1-u^2)^{(n-3)/2}\mathbf1_{[-1,1]}(u),
 \qquad
 c_n=\frac{\Gamma(n/2)}{\sqrt\pi\,\Gamma((n-1)/2)}.
\end{equation}
\end{lemma}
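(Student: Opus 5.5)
The plan is to rely on the rotation invariance of the standard Gaussian vector $G$ together with polar coordinates. The first step is to show that $U=G/|G|$ is uniform and independent of $|G|$. The density of $G$ is $(2\pi)^{-n/2}e^{-|g|^2/2}$, which depends only on $|g|$, so $OG$ has the same law as $G$ for every orthogonal $O$. Now fix a Borel set $A\subset S^{n-1}$ and an interval $I\subset(0,\infty)$. Integrating in polar coordinates $g=r\omega$, with $\dd g=|S^{n-1}|\,r^{n-1}\dd r\,\dd\sigma_{n-1}(\omega)$, gives
\[
 \Pp\{U\in A,\ |G|\in I\}
 =\sigma_{n-1}(A)\cdot c\int_I r^{n-1}e^{-r^2/2}\,\dd r .
\]
This factorization proves both that $U$ is uniform and that it is independent of $|G|$, because the normalized rotation-invariant measure $\sigma_{n-1}$ is unique.

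For the even moments \eqref{eq:sphere-even-moments}, we write $G_j=|G|U_j$ and use the independence just established:
\[
 \E\prod_j G_j^{2k_j}=\E|G|^{2K}\cdot\E\prod_j U_j^{2k_j}.
\]
The left side factorizes over independent coordinates and equals $\prod_j(2k_j-1)!!$. The radial factor follows from $|G|^2\sim\chi^2_n$:
\[
 \E|G|^{2K}=2^K\frac{\Gamma(n/2+K)}{\Gamma(n/2)}=n(n+2)\cdots(n+2K-2).
\]
Dividing the first identity by this product gives the formula. For odd powers, the reflection $G_j\mapsto -G_j$ preserves the law of $G$ and flips the sign of $U_j$ while leaving the other coordinates fixed. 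Hence any moment containing an odd power of $U_j$ equals its own negative, so it is zero. All these moments are finite because $|U_j|\le1$.

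For the density \eqref{eq:U1-density}, we write
\[
 U_1=\frac{G_1}{\sqrt{G_1^2+R^2}},\qquad R^2=\sum_{j\ge2}G_j^2\sim\chi^2_{n-1}\ \text{independent of }G_1 .
\]
Then $U_1^2=B$ with $B\sim\mathrm{Beta}(1/2,(n-1)/2)$; this is the standard ratio of independent Gamma variables. By the reflection symmetry, $U_1$ is a symmetric variable with $U_1^2=B$. Changing variables $u=\pm\sqrt b$, the Beta density $b^{-1/2}(1-b)^{(n-3)/2}/\mathrm B(1/2,(n-1)/2)$ becomes
\[
 (1-u^2)^{(n-3)/2}\big/\mathrm B\bigl(1/2,(n-1)/2\bigr)\quad\text{on }[-1,1].
\]
Since $\mathrm B(1/2,(n-1)/2)=\sqrt\pi\,\Gamma((n-1)/2)/\Gamma(n/2)$, the constant is exactly $c_n$. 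This step also works for $n=2$, where the density is integrable, and that is why the lemma assumes $n\ge2$.

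The only step with real content is the independence of $U$ and $|G|$. Everything else is bookkeeping with Gamma functions, provided that factorization is in place.
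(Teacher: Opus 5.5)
Your proposal is correct and follows the paper's proof essentially step for step: the polar decomposition $G=|G|U$ with $U$ independent of $|G|$, dividing the Gaussian even moments by $\E|G|^{2K}=n(n+2)\cdots(n+2K-2)$, coordinate reflections for the odd moments, and the $\mathrm{Beta}(1/2,(n-1)/2)$ law of $U_1^2$ for the density. The only difference is that you verify uniformity and independence directly by factorizing the Gaussian integral in polar coordinates, whereas the paper cites Vignat--Bhatnagar for that step.
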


\begin{proof}
The proof of \cite[Theorem~2]{VignatBhatnagar} uses the Gaussian polar
decomposition $G=|G|U$, in which $U$ is uniform on the sphere and is
independent of $|G|$; that theorem also gives the spherical moment formula
used below. We only write the coordinate calculation needed
here.

Since $G=|G|U$ and $|G|$ is independent of $U$,
\[
 \E\prod_{j=1}^rG_j^{2k_j}
 =\E|G|^{2K}\,\E\prod_{j=1}^rU_j^{2k_j}.
\]
The left-hand side is $\prod_j(2k_j-1)!!$. Since $|G|^2$ has the
$\chi_n^2$ distribution,
\[
 \E|G|^{2K}
 =2^K\frac{\Gamma(n/2+K)}{\Gamma(n/2)}
 =n(n+2)\cdots(n+2K-2).
\]
This proves \eqref{eq:sphere-even-moments}. Odd moments vanish by invariance
under a sign change of any coordinate.

Finally,
\[
 U_1^2=\frac{G_1^2}{G_1^2+\sum_{j=2}^nG_j^2}.
\]
The two terms in the denominator are independent chi-square variables with
one and $n-1$ degrees of freedom. Thus $U_1^2$ has the beta distribution
with parameters $1/2$ and $(n-1)/2$, and the change of variables
$u\mapsto u^2$ gives \eqref{eq:U1-density}.
\end{proof}

\section{The matching lower bound}\label{sec:lower}

Let $E$ be exponential with mean one and put $\xi=E-1$. Then $\xi$ is
centered, has variance one, and has a log-concave density. For a real random
variable $Y$, write
\[
 \varphi_Y(t):=\E e^{itY},\qquad t\in\R,
\]
for its characteristic function. Let
$X^{(n)}=(\xi_1,\ldots,\xi_n)$ with independent coordinates. For
$\theta\in S^{n-1}$ set
\[
 S_\theta=\sum_{j=1}^n\theta_j\xi_j,
 \qquad
 \alpha_3(\theta)=\sum_{j=1}^n\theta_j^3,
 \qquad
 \beta_4(\theta)=\sum_{j=1}^n\theta_j^4.
\]

\begin{lemma}\label{lem:cf-lower}
There are universal constants $t_0,c_0,C_0>0$ such that, for
$0<t\le t_0$ and every $\theta\in S^{n-1}$,
\begin{equation}\label{eq:cf-lower}
 W_1(\Law(S_\theta),\gamma_1)
 \ge c_0t^2|\alpha_3(\theta)|-C_0t^3\beta_4(\theta)-C_0t^8|\alpha_3(\theta)|^3.
\end{equation}
\end{lemma}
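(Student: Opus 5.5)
We test the $W_1$ distance against the imaginary part of a characteristic function. For $t>0$ the function $h_t(x)=\sin(tx)/t$ is one-Lipschitz, so \eqref{eq:w1-test} in Lemma~\ref{lem:w1-line} gives
\[
 W_1(\Law(S_\theta),\gamma_1)\ge\frac1t\bigl|\operatorname{Im}\varphi_{S_\theta}(t)-\operatorname{Im}e^{-t^2/2}\bigr|=\frac1t\bigl|\operatorname{Im}\varphi_{S_\theta}(t)\bigr|,
\]
because the Gaussian characteristic function is real. It then remains to show
\[
 |\operatorname{Im}\varphi_{S_\theta}(t)|\ge c_0t^3|\alpha_3|-C_0t^4\beta_4-C_0t^9|\alpha_3|^3
\]
for $0<t\le t_0$. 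The factor $1/t$ converts this into \eqref{eq:cf-lower}.

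We use independence and the explicit law of $\xi$. We have $\varphi_\xi(s)=e^{-is}/(1-is)$, so $\log\varphi_\xi(s)=-is-\log(1-is)=-s^2/2-is^3/3+r(s)$. Here $r(s)=\sum_{k\ge4}(is)^k/k$, and $|r(s)|\le Cs^4$ for $|s|\le1/2$. Since $|\theta_j|\le1$, taking $t_0\le1/2$ ensures $|t\theta_j|\le1/2$ for every $j$. Summing over $j$ and using $\sum\theta_j^2=1$ gives
\[
 \varphi_{S_\theta}(t)=e^{-t^2/2}\exp\bigl(-\tfrac i3t^3\alpha_3+R\bigr),\qquad |R|\le Ct^4\beta_4 .
\]
Write $a=t^3\alpha_3/3$. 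Note that $|\alpha_3|\le\sum\theta_j^2=1$, so $|a|\le t_0^3/3$ is small.

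Next we expand. We have $\exp(-ia+R)=e^{-ia}+e^{-ia}(e^R-1)$. Since $|R|\le C t_0^4$ is bounded, $|e^R-1|\le C'|R|$. Also $\operatorname{Im}e^{-ia}=-\sin a$, and $|\sin a|\ge|a|-|a|^3/6$. Together with $e^{-t^2/2}\ge e^{-1/8}$, this gives
\[
 |\operatorname{Im}\varphi_{S_\theta}(t)|\ge e^{-1/8}\Bigl(\tfrac{t^3|\alpha_3|}{3}-\tfrac{t^9|\alpha_3|^3}{162}\Bigr)-C''t^4\beta_4 .
\]
Dividing by $t$ yields $W_1\ge c_0t^2|\alpha_3|-C_0t^3\beta_4-C_0t^8|\alpha_3|^3$, which is exactly the stated form.

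The only delicate point is making the remainder bound uniform in $\theta$. One has to check that each $|t\theta_j|$ stays inside the disc where the logarithm series for $\log(1-is)$ converges with a quartic remainder. One also has to check that the constant in $|e^R-1|\le C'|R|$ is universal, which follows from $\beta_4\le1$. Both are handled by fixing $t_0=1/2$ or smaller. After that, everything reduces to elementary one-variable Taylor bounds.
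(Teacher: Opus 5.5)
Your proposal is correct and follows essentially the same route as the paper. Both arguments use the one-Lipschitz test function $h_t(x)=\sin(tx)/t$ to reduce to $|\operatorname{Im}\varphi_{S_\theta}(t)|/t$, the explicit expansion of $\log\varphi_\xi$ with a quartic remainder summing to $O(t^4\beta_4(\theta))$, the bound $|e^{R}-1|\le C|R|$, and $|\sin a|\ge|a|-|a|^3/6$. The only differences are cosmetic: you track explicit constants such as $e^{-1/8}$ and $1/162$, and you present the test-function step first rather than last.
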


\begin{proof}
The characteristic function of $\xi$ is
\[
 \varphi_\xi(s)
 =\E e^{is(E-1)}
 =e^{-is}\int_0^\infty e^{-(1-is)x}\,\dd x
 =\frac{e^{-is}}{1-is}.
\]
Fix $|s|\le1/2$.  Then $1-is$ belongs to the open right half-plane, so the
principal logarithm is defined at $1-is$.  Moreover, since $|is|<1$,
\[
 -\operatorname{Log}(1-is)=\sum_{k=1}^\infty\frac{(is)^k}{k}.
\]
Let $L(s)$ be the analytic logarithm of $\varphi_\xi(s)$ near the origin
which satisfies $L(0)=0$.  The preceding identity gives
\begin{align}
 L(s)
 &=-is-\operatorname{Log}(1-is)\notag\\
 &=-is+\sum_{k=1}^\infty\frac{(is)^k}{k}\notag\\
 &=-\frac{s^2}{2}-i\frac{s^3}{3}+\rho(s),
 \label{eq:log-cf-exp}
\end{align}
where
\[
 \rho(s):=\sum_{k=4}^\infty\frac{(is)^k}{k}.
\]
The remainder is bounded explicitly by
\begin{equation}\label{eq:rho-bound}
 |\rho(s)|
 \le\sum_{k=4}^\infty\frac{|s|^k}{k}
 \le\frac{|s|^4}{4(1-|s|)}
 \le\frac12|s|^4.
\end{equation}

Choose $t_0\le1/2$.  Since $|\theta_j|\le1$, the expansion above may be
applied to every $s=t\theta_j$ whenever $0<t\le t_0$.  Independence of the
coordinates gives
\begin{align*}
 \varphi_{S_\theta}(t)
 &=\prod_{j=1}^n\varphi_\xi(t\theta_j)\\
 &=\exp\left(\sum_{j=1}^nL(t\theta_j)\right).
\end{align*}
Using $\sum_j\theta_j^2=1$ and the definitions of $\alpha_3(\theta)$ and
$\beta_4(\theta)$, we obtain
\begin{equation}\label{eq:cf-Stheta-exp}
 \varphi_{S_\theta}(t)
 =\exp\left(
 -\frac{t^2}{2}
 -i\frac{t^3}{3}\alpha_3(\theta)
 +\varepsilon_\theta(t)
 \right),
\end{equation}
where
\[
 \varepsilon_\theta(t):=\sum_{j=1}^n\rho(t\theta_j).
\]
By \eqref{eq:rho-bound},
\begin{equation}\label{eq:eps-theta-bound}
 |\varepsilon_\theta(t)|
 \le\frac12t^4\sum_{j=1}^n\theta_j^4
 =\frac12t^4\beta_4(\theta).
\end{equation}
Also,
\[
 \beta_4(\theta)=\sum_{j=1}^n\theta_j^4
 \le\left(\max_{1\le j\le n}\theta_j^2\right)
     \sum_{j=1}^n\theta_j^2
 \le1,
\]
and
\begin{equation}\label{eq:A-absolute-bound}
 |\alpha_3(\theta)|
 \le\sum_{j=1}^n|\theta_j|^3
 \le\left(\max_{1\le j\le n}|\theta_j|\right)
     \sum_{j=1}^n\theta_j^2
 \le1.
\end{equation}

Set
\[
 u:=\frac{t^3}{3}\alpha_3(\theta),
 \qquad
 \varepsilon:=\varepsilon_\theta(t).
\]
Equation \eqref{eq:cf-Stheta-exp} becomes
\[
 \varphi_{S_\theta}(t)=e^{-t^2/2}e^{-iu}e^{\varepsilon}.
\]
From the power series of the exponential,
\[
 |e^{\varepsilon}-1|
 \le\sum_{m=1}^\infty\frac{|\varepsilon|^m}{m!}
 =e^{|\varepsilon|}-1
 \le e^{|\varepsilon|}|\varepsilon|.
\]
Choose $t_0$ small enough that $t_0^4/2\le1$.  Then
\eqref{eq:eps-theta-bound} implies $|\varepsilon|\le1$, and hence $e^{|\varepsilon|}\le e$.  Thus
\begin{equation}\label{eq:exp-eps-bound}
 |e^{\varepsilon}-1|
 \le e|\varepsilon|
 \le\frac e2t^4\beta_4(\theta).
\end{equation}
Since
\[
 e^{-iu}e^{\varepsilon}=e^{-iu}+e^{-iu}(e^{\varepsilon}-1),
\]
the triangle inequality implies
\begin{align*}
 |\operatorname{Im}\varphi_{S_\theta}(t)|
 &=e^{-t^2/2}\left|\operatorname{Im}(e^{-iu}e^{\varepsilon})\right|\\
 &\ge e^{-t^2/2}|\operatorname{Im}(e^{-iu})|
      -e^{-t^2/2}|e^{\varepsilon}-1|\\
 &=e^{-t^2/2}|\sin u|-e^{-t^2/2}|e^{\varepsilon}-1|.
\end{align*}
For $0<t\le t_0$,
\[
 e^{-t^2/2}\ge e^{-t_0^2/2}=:c_1>0.
\]
Combining this bound with \eqref{eq:exp-eps-bound} gives
\begin{equation}\label{eq:imag-before-sine}
 |\operatorname{Im}\varphi_{S_\theta}(t)|
 \ge c_1|\sin u|-Ct^4\beta_4(\theta).
\end{equation}
To estimate the sine term, write
\[
 \sin u-u=\int_0^u(\cos v-1)\,\dd v.
\]
Since
\[
 |1-\cos v|
 =\left|\int_0^v\sin w\,\dd w\right|
 \le\int_0^{|v|}w\,\dd w
 =\frac{v^2}{2},
\]
we obtain
\[
 |\sin u-u|
 \le\int_0^{|u|}\frac{v^2}{2}\,\dd v
 =\frac{|u|^3}{6}.
\]
Hence
\[
 |\sin u|
 \ge |u|-\frac{|u|^3}{6}.
\]
Using $u=t^3\alpha_3(\theta)/3$ in \eqref{eq:imag-before-sine}, we obtain
\begin{equation}\label{eq:imag-final-bound}
 |\operatorname{Im}\varphi_{S_\theta}(t)|
 \ge ct^3|\alpha_3(\theta)|
     -Ct^4\beta_4(\theta)
     -Ct^9|\alpha_3(\theta)|^3.
\end{equation}

It remains to convert \eqref{eq:imag-final-bound} into a lower bound for
$W_1$.  Define
\[
 h_t(x):=\frac{\sin(tx)}{t}.
\]
Since $h_t'(x)=\cos(tx)$, we have $|h_t'(x)|\le1$, so $h_t$ is
one-Lipschitz.  For every real random variable $Y$,
\begin{align*}
 \varphi_Y(t)
 &=\E\bigl(\cos(tY)+i\sin(tY)\bigr),\\
 \operatorname{Im}\varphi_Y(t)
 &=\E\sin(tY),
\end{align*}
and therefore
\begin{equation}\label{eq:ht-characteristic}
 \E h_t(Y)=\frac{1}{t}\operatorname{Im}\varphi_Y(t).
\end{equation}
If $N$ is standard Gaussian, then
$\varphi_N(t)=e^{-t^2/2}$ is real, so
\[
 \E h_t(N)=\frac{1}{t}\operatorname{Im}\varphi_N(t)=0.
\]
Applying \eqref{eq:w1-test} from Lemma~\ref{lem:w1-line} to the
one-Lipschitz function $h_t$ yields
\begin{align*}
 W_1(\Law(S_\theta),\gamma_1)
 &\ge\left|\E h_t(S_\theta)-\E h_t(N)\right|\\
 &=\frac{|\operatorname{Im}\varphi_{S_\theta}(t)|}{t}.
\end{align*}
Substitution of \eqref{eq:imag-final-bound} proves \eqref{eq:cf-lower}.
\end{proof}

\begin{lemma}\label{lem:sphere-lower}
For $\Theta$ uniform on $S^{n-1}$,
\begin{align}
 \E A(\Theta)^2
 &=\frac{15}{(n+2)(n+4)},\label{eq:A2}\\
 \E A(\Theta)^4
 &=\frac{135(5n+72)}{(n+2)(n+4)(n+6)(n+8)(n+10)},\label{eq:A4}\\
 \E B(\Theta)&=\frac{3}{n+2}.
 \label{eq:Bmean}
\end{align}
In particular,
\begin{equation}\label{eq:A1-lower}
 \E|A(\Theta)|\ge\frac{c}{n},
 \qquad
 \E|A(\Theta)|^3\le\frac{C}{n^3}.
\end{equation}
\end{lemma}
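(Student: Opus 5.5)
The plan is to compute all three identities directly from the spherical moment formula \eqref{eq:sphere-even-moments} of Lemma~\ref{lem:sphere-basic}. The other two facts, that even moments factor through the double-factorial numerator and that any monomial containing an odd power of a coordinate has zero mean, will do all the bookkeeping. Here $A(\Theta)=\alpha_3(\Theta)=\sum_j\Theta_j^3$ and $B(\Theta)=\beta_4(\Theta)=\sum_j\Theta_j^4$. I then derive \eqref{eq:A1-lower} from \eqref{eq:A2} and \eqref{eq:A4} by Hölder-type moment interpolation. For $n=1$ everything is trivial, since $A=\pm1$ and $B=1$, so I take $n\ge2$.

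For \eqref{eq:Bmean}, linearity and symmetry give $\E B=n\,\E\Theta_1^4=n\cdot 3/(n(n+2))$.

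For \eqref{eq:A2}, expand $A^2=\sum_{j,k}\Theta_j^3\Theta_k^3$. The terms with $j\ne k$ contain odd powers and vanish. The diagonal then gives $n\,\E\Theta_1^6=n\cdot 15/(n(n+2)(n+4))$.

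For \eqref{eq:A4}, expand $A^4=\sum_{i_1,\dots,i_4}\prod_m\Theta_{i_m}^3$. A term survives only if every index value occurs an even number of times. So either all four indices coincide, contributing $n\,\E\Theta_1^{12}=n\cdot 11!!/(n(n+2)\cdots(n+10))$. Or the indices form two distinct pairs: there are $3$ pairings and $n(n-1)$ ordered choices of the two values, each contributing $\E\Theta_1^6\Theta_2^6=15^2/(n(n+2)\cdots(n+10))$. Adding these gives
\[
 \frac{10395+675(n-1)}{(n+2)(n+4)(n+6)(n+8)(n+10)}=\frac{135(5n+72)}{(n+2)\cdots(n+10)},
\]
as claimed.

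For \eqref{eq:A1-lower}, the upper bound follows from Lyapunov's inequality: $\E|A|^3\le(\E A^4)^{3/4}\le (Cn^{-4})^{3/4}=Cn^{-3}$, using $\E A^4\le Cn/n^5$. For the lower bound on the first absolute moment, I write $A^2=|A|^{2/3}|A|^{4/3}$ and apply Hölder with exponents $3$ and $3/2$ to get $\E A^2\le(\E|A|)^{2/3}(\E A^4)^{1/3}$. Hence
\[
 \E|A|\ge \frac{(\E A^2)^{3/2}}{(\E A^4)^{1/2}} .
\]
The numerator is of exact order $n^{-3}$, since $15/((n+2)(n+4))\ge c n^{-2}$ for $n\ge2$. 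The denominator is at most $Cn^{-2}$. So $\E|A|\ge c/n$ with a universal $c$, valid for all $n\ge2$ because the rational expressions are comparable to their leading powers uniformly in $n\ge2$.

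The only real obstacle is the combinatorial count in \eqref{eq:A4}: correctly enumerating which index patterns survive, and checking the constant $135(5n+72)$ against the pairing count. Everything else is a direct substitution into Lemma~\ref{lem:sphere-basic} followed by the Hölder step.
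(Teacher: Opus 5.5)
Your proposal is correct and follows the paper's proof essentially step for step. You use the same odd-moment bookkeeping and spherical moment formula for the three identities, and your count $3n(n-1)$ of two-pair index patterns equals the paper's $6\binom n2$. You also use the same interpolation $\E|A|\ge(\E A^2)^{3/2}/(\E A^4)^{1/2}$ and the same Lyapunov bound for \eqref{eq:A1-lower}.

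One small slip: in the H\"older step the exponent $3/2$ must go on $|A|^{2/3}$ and $3$ on $|A|^{4/3}$. The other assignment only yields $\E A^2\le\E A^2$. The inequality you actually state, $\E A^2\le(\E|A|)^{2/3}(\E A^4)^{1/3}$, is the right one.
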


\begin{proof}
We first explain which terms in the expansions can have nonzero expectation.
The uniform distribution on $S^{n-1}$ is invariant under reflection in any
coordinate.  Thus, for every $r\in\{1,\ldots,n\}$,
\[
 (\Theta_1,\ldots,\Theta_r,\ldots,\Theta_n)
 \stackrel{d}{=}
 (\Theta_1,\ldots,-\Theta_r,\ldots,\Theta_n).
\]
It follows that
\begin{equation}\label{eq:odd-coordinate-zero}
 \E\prod_{j=1}^n\Theta_j^{m_j}=0
 \quad\text{whenever at least one exponent $m_j$ is odd.}
\end{equation}
Indeed, reflecting a coordinate with odd exponent changes the sign of the
integrand but does not change its distribution.

Expanding the square gives
\[
 A(\Theta)^2
 =\sum_{i=1}^n\Theta_i^6
  +\sum_{i\ne j}\Theta_i^3\Theta_j^3.
\]
Every term in the second sum has an odd power of both $\Theta_i$ and
$\Theta_j$, and hence has expectation zero by
\eqref{eq:odd-coordinate-zero}.  By exchangeability of the coordinates and
Lemma~\ref{lem:sphere-basic},
\begin{align*}
 \E A(\Theta)^2
 &=n\E\Theta_1^6\\
 &=n\frac{5!!}{n(n+2)(n+4)}\\
 &=\frac{15}{(n+2)(n+4)}.
\end{align*}
This proves \eqref{eq:A2}.

Next expand
\[
 A(\Theta)^4
 =\sum_{i,j,k,l=1}^n
   \Theta_i^3\Theta_j^3\Theta_k^3\Theta_l^3.
\]
By \eqref{eq:odd-coordinate-zero}, a summand can have nonzero expectation
only if every coordinate index among $i,j,k,l$ occurs an even number of
times.  There are exactly two possible patterns:
\begin{enumerate}[label=\textnormal{(\roman*)},leftmargin=2.2em]
 \item one index occurs four times;
 \item two distinct indices occur twice each.
\end{enumerate}
The first pattern gives $n$ terms.  For the second pattern, choose the two
indices in $\binom n2$ ways and choose the two positions occupied by the
first index in $\binom42=6$ ways.  Therefore
\begin{align*}
 \E A(\Theta)^4
 &=n\E\Theta_1^{12}
   +6\binom n2\E(\Theta_1^6\Theta_2^6).
\end{align*}
Lemma~\ref{lem:sphere-basic} gives
\begin{align*}
 \E\Theta_1^{12}
 &=\frac{11!!}{n(n+2)(n+4)(n+6)(n+8)(n+10)}\\
 &=\frac{10395}{n(n+2)(n+4)(n+6)(n+8)(n+10)},
\end{align*}
and
\begin{align*}
 \E(\Theta_1^6\Theta_2^6)
 &=\frac{(5!!)^2}{n(n+2)(n+4)(n+6)(n+8)(n+10)}\\
 &=\frac{225}{n(n+2)(n+4)(n+6)(n+8)(n+10)}.
\end{align*}
Consequently,
\begin{align*}
 \E A(\Theta)^4
 &=\frac{10395n+675n(n-1)}
 {n(n+2)(n+4)(n+6)(n+8)(n+10)}\\
 &=\frac{135(5n+72)}
 {(n+2)(n+4)(n+6)(n+8)(n+10)},
\end{align*}
which is \eqref{eq:A4}.

Finally, exchangeability and Lemma~\ref{lem:sphere-basic} yield
\begin{align*}
 \E B(\Theta)
 &=\sum_{j=1}^n\E\Theta_j^4
 =n\E\Theta_1^4\\
 &=n\frac{3}{n(n+2)}
 =\frac{3}{n+2}.
\end{align*}
This proves \eqref{eq:Bmean}.

For completeness, interpolation between $L^1$ and $L^4$ gives
\[
 \|A\|_2\le\|A\|_1^{1/3}\|A\|_4^{2/3},
\]
because $1/2=(1/3)/1+(2/3)/4$.  Cubing this inequality and rearranging gives
\[
 \E|A|
 =\|A\|_1
 \ge\frac{\|A\|_2^3}{\|A\|_4^2}
 =\frac{(\E A^2)^{3/2}}{(\E A^4)^{1/2}}.
\]
Also, monotonicity of $L^p$ norms gives
\[
 \E|A|^3=\|A\|_3^3\le\|A\|_4^3=(\E A^4)^{3/4}.
\]
Substituting \eqref{eq:A2} and \eqref{eq:A4} into these two inequalities
proves \eqref{eq:A1-lower}.
\end{proof}

\begin{proof}[Proof of Theorem~\ref{thm:lower}]
Average \eqref{eq:cf-lower} over $\Theta$. By
Lemma~\ref{lem:sphere-lower},
\[
 \E W_1(\Law(S_\Theta),\gamma_1)
 \ge \frac{c_1t^2}{n}-\frac{C_1t^3}{n}-\frac{C_2t^8}{n^3}.
\]
Choose $t>0$ so small that $C_1t\le c_1/2$. This choice is independent of
$n$. Then
\[
 \E W_1(\Law(S_\Theta),\gamma_1)
 \ge \frac{c_1t^2}{2n}-\frac{C_2t^8}{n^3}.
\]
For all sufficiently large $n$, the last term is at most
$c_1t^2/(4n)$. The desired lower bound follows.
\end{proof}

\section{External inputs and consequences used below}

Throughout the proof, $C,c>0$ denote universal constants which may change
from line to line. Constants with a subscript may depend only on that subscript.

\subsection{Notation for the upper-bound proof}

From this section onward, $X$ is the centered isotropic log-concave random
vector fixed in the introduction, and $Y$ is an independent copy of $X$.
In particular, both $X$ and $Y$ have the same log-concave law. We use the
persistent notation
\begin{equation}\label{eq:upper-notation}
\begin{aligned}
 R_X&:=|X|, & R_Y&:=|Y|,\\
 \Delta_X&:=R_X^2-n, & \Delta_Y&:=R_Y^2-n,\\
 Z&:=\ip{X}{Y}, & \widetilde X&:=\frac{X-Y}{\sqrt2}.
\end{aligned}
\end{equation}
The symbols in \eqref{eq:upper-notation} retain these meanings throughout
the upper-bound proof.

\subsection{Quadratic variance}

The following estimate is an immediate isotropic consequence of
\cite[Theorem~1.2]{Letwin}.

\begin{theorem}\label{thm:qve}
Let $X$ be centered, isotropic, and log-concave in $\R^n$. Then, for every
symmetric matrix $M$,
\begin{equation}\label{eq:qve}
 \Var(X^TMX)\le 8\Tr(M^2).
\end{equation}
\end{theorem}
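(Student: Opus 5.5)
This is a direct consequence of \cite[Theorem~1.2]{Letwin}, which we take in the form: for a log-concave random vector $X$ in $\R^n$ and every symmetric matrix $M$,
\[
 \Var(X^TMX)\le 2\,\E|\nabla(X^TMX)|^2 .
\]
The plan is to specialize this to the isotropic case by computing the right-hand side exactly. No further analytic input is needed; the argument is bookkeeping.

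\textbf{Step 1: the gradient.} For symmetric $M$ the function $f(x)=x^TMx$ is a quadratic form. Its gradient is $\nabla f(x)=(M+M^T)x=2Mx$. Hence
\[
 |\nabla f(X)|^2=4X^TM^2X ,
\]
using $M^TM=M^2$.

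\textbf{Step 2: the expectation.} By linearity of trace and expectation,
\[
 \E X^TM^2X=\E\Tr(M^2XX^T)=\Tr\bigl(M^2\,\E XX^T\bigr)=\Tr(M^2),
\]
since $\E XX^T=I_n$ by isotropy. Only the second-moment normalization is used here, so centering plays no role in this step.

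\textbf{Step 3: conclusion.} Substituting gives
\[
 \Var(X^TMX)\le 2\cdot 4\Tr(M^2)=8\Tr(M^2),
\]
which is \eqref{eq:qve}.

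\textbf{Points to check.} The inequality must be invoked exactly under the hypotheses of \cite[Theorem~1.2]{Letwin}. If that theorem is stated with the covariance matrix in the gradient term, i.e.\ as $2\,\E\ip{\Cov(X)\nabla f}{\nabla f}$, then isotropy again reduces it to $2\,\E|\nabla f|^2$. It may also be stated only for centered measures, which our $X$ is.

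Integrability is automatic. Log-concave vectors have moments of all orders, so $\Var(X^TMX)$ and $\E|\nabla f|^2$ are finite.

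If the external result requires a density on all of $\R^n$, this also holds here. An isotropic covariance is nonsingular, so the law of $X$ is absolutely continuous on $\R^n$ with density $e^{-\mathcal V}$ on a convex set, as noted in the introduction.
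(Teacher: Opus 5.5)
Your proposal is correct and matches the paper's argument: you quote \cite[Theorem~1.2]{Letwin} as $\Var(X^TMX)\le 2\E|\nabla(X^TMX)|^2$, compute $\nabla(X^TMX)=2MX$, and use isotropy to get $\E|MX|^2=\Tr(M^2)$. The factor $2\cdot4=8$ then gives \eqref{eq:qve}.
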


Indeed, the cited theorem states
\[
 \Var(X^TMX)\le2\E|\nabla(X^TMX)|^2.
\]
For symmetric $M$, $\nabla(X^TMX)=2MX$. Moreover, isotropy gives
\begin{align*}
 \E|MX|^2
 &=\E\Tr(MXX^TM)\\
 &=\Tr\bigl(M\E[XX^T]M\bigr)\\
 &=\Tr(M^2).
\end{align*}
Substitution proves \eqref{eq:qve}. We now derive the three consequences
needed later.

Taking $M=\Id$ in \eqref{eq:qve} gives
\begin{equation}\label{eq:delta-L2}
 \E\Delta_X^2\le 8n.
\end{equation}

For $u\in\R^n$, define the symmetric matrix
\begin{equation}\label{eq:Tu-def}
 T_u:=\E\bigl[\ip{X}{u}XX^T\bigr].
\end{equation}

\begin{lemma}\label{lem:Tu}
For every $u\in\R^n$,
\begin{equation}\label{eq:Tu}
 \norm{T_u}_{\HS}\le\sqrt8\,|u|.
\end{equation}
Consequently, if
\begin{equation}\label{eq:v-def}
 v:=\E[\Delta_X X],
\end{equation}
then
\begin{equation}\label{eq:v-bound}
 |v|^2\le8n.
\end{equation}
\end{lemma}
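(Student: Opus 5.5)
We prove \eqref{eq:Tu} by duality with the Hilbert--Schmidt inner product and then use \eqref{eq:qve}. Fix $u\in\R^n$. Since $T_u$ is symmetric,
\[
 \norm{T_u}_{\HS}=\sup\bigl\{\Tr(T_uM): M=M^T,\ \Tr(M^2)\le1\bigr\}.
\]
For such $M$, linearity of trace and expectation gives
\[
 \Tr(T_uM)=\E\bigl[\ip{X}{u}\,X^TMX\bigr].
\]
Because $\E\ip{X}{u}=0$, we may subtract the constant $\E X^TMX=\Tr M$ from the quadratic form. This shows
\[
 \Tr(T_uM)=\Cov\bigl(\ip{X}{u},X^TMX\bigr).
\]

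Next we apply Cauchy--Schwarz. Isotropy gives $\Var\ip{X}{u}=|u|^2$, and \eqref{eq:qve} gives $\Var(X^TMX)\le8\Tr(M^2)\le8$. Hence
\[
 |\Tr(T_uM)|\le|u|\sqrt8 .
\]
Taking the supremum over $M$ proves \eqref{eq:Tu}.

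For \eqref{eq:v-bound}, we relate $v$ to $T_u$ by taking the trace. Since $\E X=0$,
\[
 \ip{v}{u}=\E[(|X|^2-n)\ip{X}{u}]=\E[|X|^2\ip{X}{u}]=\Tr(T_u).
\]
By Cauchy--Schwarz in the Hilbert--Schmidt inner product against $I$, and then \eqref{eq:Tu},
\[
 |\ip{v}{u}|=|\Tr(T_uI)|\le\sqrt n\,\norm{T_u}_{\HS}\le\sqrt{8n}\,|u|.
\]
Choosing $u=v$ gives $|v|^2\le\sqrt{8n}\,|v|$, so $|v|\le\sqrt{8n}$ and $|v|^2\le8n$.

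Alternatively, the second claim follows directly: $\ip{v}{u}=\Cov(\ip{X}{u},\Delta_X)$, so \eqref{eq:delta-L2} gives $|\ip{v}{u}|\le|u|\sqrt{8n}$.

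The only nonroutine step is the duality representation of $\norm{T_u}_{\HS}$ combined with centering the quadratic form so that the covariance appears; everything after that is Cauchy--Schwarz.
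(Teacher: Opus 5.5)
Your proof is correct and follows the paper's argument essentially step for step. Both use Hilbert--Schmidt duality over symmetric $M$, center the quadratic form using $\E\ip{X}{u}=0$, apply Cauchy--Schwarz with \eqref{eq:qve}, and then use $|\ip{v}{u}|=|\Tr T_u|\le\sqrt n\,\norm{T_u}_{\HS}$. Your alternative for the second claim, $|\Cov(\ip{X}{u},\Delta_X)|\le|u|\sqrt{8n}$ via \eqref{eq:delta-L2}, is also valid; it is the case $M=I_n$ of the same covariance bound and bypasses \eqref{eq:Tu}.
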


\begin{proof}
Let $M$ be symmetric with $\norm M_{\HS}=1$. Since $\E\ip{X}{u}=0$,
\[
 \Tr(T_uM)
 =\E\left[\ip{X}{u}\bigl(X^TMX-\Tr M\bigr)\right].
\]
Cauchy--Schwarz and Theorem~\ref{thm:qve} give
\[
 |\Tr(T_uM)|
 \le
 \sqrt{\E\ip{X}{u}^2}\,
 \sqrt{\Var(X^TMX)}
 \le \sqrt8\,|u|.
\]
For a symmetric matrix $A$,
\[
 \|A\|_{\HS}
 =\sup\bigl\{|\Tr(AM)|:M=M^T,\ \|M\|_{\HS}=1\bigr\}.
\]
Apply this formula with $A=T_u$. The preceding estimate therefore proves
\eqref{eq:Tu}.
Moreover,
\[
 \ip{v}{u}
 =\E[\Delta_X\ip{X}{u}]
 =\E[R_X^2\ip{X}{u}]
 =\Tr T_u.
\]
Hence
\[
 |\ip{v}{u}|
 \le\sqrt n\norm{T_u}_{\HS}
 \le\sqrt{8n}|u|,
\]
Taking the supremum over all $u$ with $|u|=1$ gives
$|v|\le\sqrt{8n}$, and hence \eqref{eq:v-bound}.
\end{proof}

Define also
\begin{equation}\label{eq:B-def}
 B:=\E[\Delta_X XX^T].
\end{equation}

\begin{lemma}\label{lem:B}
The matrix $B$ satisfies
\begin{equation}\label{eq:B-bound}
 \norm B_{\HS}\le8\sqrt n.
\end{equation}
\end{lemma}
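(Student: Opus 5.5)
We will use duality, exactly as in the proof of Lemma~\ref{lem:Tu}. Since $B$ is symmetric,
\[
 \norm B_{\HS}=\sup\bigl\{|\Tr(BM)|:M=M^T,\ \norm M_{\HS}=1\bigr\},
\]
so it suffices to bound $|\Tr(BM)|\le 8\sqrt n$ for every such $M$. Fix a symmetric $M$ with $\norm M_{\HS}=1$. Then
\[
 \Tr(BM)=\E\bigl[\Delta_X\,X^TMX\bigr].
\]

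Because $\E\Delta_X=0$, we may subtract the constant $\E X^TMX=\Tr M$ (this uses isotropy) without changing the expectation:
\[
 \Tr(BM)=\E\bigl[\Delta_X\,(X^TMX-\Tr M)\bigr].
\]
Cauchy--Schwarz then gives
\[
 |\Tr(BM)|\le\sqrt{\E\Delta_X^2}\,\sqrt{\Var(X^TMX)}.
\]
We bound the first factor by \eqref{eq:delta-L2}, which gives $\E\Delta_X^2\le 8n$. For the second factor, Theorem~\ref{thm:qve} gives $\Var(X^TMX)\le 8\Tr(M^2)=8\norm M_{\HS}^2=8$. Combining the two,
\[
 |\Tr(BM)|\le\sqrt{8n}\cdot\sqrt8=8\sqrt n.
\]
Taking the supremum over $M$ proves \eqref{eq:B-bound}.

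The only point needing care is the centering step. Without subtracting $\Tr M$, the factor $\E(X^TMX)^2$ would contain $(\Tr M)^2$, which can be as large as $n$ when $\norm M_{\HS}=1$, and the argument would lose the scale. Centering through $\E\Delta_X=0$ removes this term. All integrability follows from the finite moments of log-concave laws.
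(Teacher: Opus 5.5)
Your proof is correct and is the same argument the paper gives. The paper writes $\Tr(BM)=\Cov(R_X^2,X^TMX)$, which is your centering step, and then applies Cauchy--Schwarz with \eqref{eq:delta-L2} and Theorem~\ref{thm:qve} before taking the supremum over symmetric $M$ with $\norm M_{\HS}=1$.
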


\begin{proof}
For every symmetric $M$ with $\norm M_{\HS}=1$,
\[
 \Tr(BM)
 =\Cov(R_X^2,X^TMX).
\]
Thus, by Cauchy--Schwarz, \eqref{eq:delta-L2}, and
Theorem~\ref{thm:qve},
\[
 |\Tr(BM)|
 \le\sqrt{8n}\sqrt8=8\sqrt n.
\]
Using
\[
 \|B\|_{\HS}
 =\sup\bigl\{|\Tr(BM)|:M=M^T,\ \|M\|_{\HS}=1\bigr\}
\]
proves \eqref{eq:B-bound}.
\end{proof}

\subsection{Polynomial moments}

We use the following fixed-degree consequence of a polynomial norm
comparison. The derivation records the dependence on $p$ explicitly.

\begin{theorem}\label{thm:CW}
Let $\nu$ be a log-concave probability measure on a finite-dimensional
Euclidean space. For every $p\ge2$ and every polynomial $P$ of degree at
most two,
\begin{equation}\label{eq:CW}
 \norm{P}_{L^p(\nu)}\le Cp^2\norm{P}_{L^2(\nu)},
\end{equation}
where $C$ is universal.
\end{theorem}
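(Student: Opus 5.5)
We would obtain \eqref{eq:CW} from the classical reverse H\"older (Khintchine--Kahane type) inequality for polynomials under log-concave measures, due to Bourgain and in sharp form to Carbery--Wright and Nazarov--Sodin--Volberg, with the $p$-dependence written out. The key input is the following distributional form. If $\nu$ is log-concave on $\R^m$ and $P$ is a polynomial of degree at most $d$, then for every $\varepsilon>0$
\[
 \nu\bigl\{|P|>\varepsilon^{-d}\,\norm{P}_{L^1(\nu)}\bigr\}\le C\exp(-c/\varepsilon),
\]
or equivalently,
\[
 \nu\bigl\{|P|>s\norm{P}_{L^1(\nu)}\bigr\}\le C\exp(-cs^{1/d}),\qquad s>0.
\]
We would cite this tail bound as the external input; it is proved by localization, reducing to one-dimensional log-concave measures, where a degree-$d$ polynomial has at most $d$ roots and a Remez-type inequality applies. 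In the present case $d=2$, so the tail is $C\exp(-c\sqrt s)$.

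The deduction would proceed in two steps. First, integrate the tail. Writing $A=\norm{P}_{L^1(\nu)}$,
\[
 \E|P|^p=\int_0^\infty p s^{p-1}A^p\,\nu\{|P|>sA\}\dd s
 \le CA^p p\int_0^\infty s^{p-1}e^{-c\sqrt s}\dd s .
\]
The substitution $s=r^2$ turns the last integral into $2c^{-2p}\Gamma(2p)$. Since $\Gamma(2p)^{1/p}\le (2p)^2$, we get
\[
 \norm P_{L^p}\le (2Cp)^{1/p}c^{-2}(2p)^2A\le C'p^2\norm{P}_{L^1}.
\]
Second, use $\norm P_{L^1}\le\norm P_{L^2}$ for a probability measure. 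This gives \eqref{eq:CW} with a universal $C$. The argument works in any finite dimension, so the ambient space is irrelevant. This is why the statement is phrased for a general log-concave $\nu$; it will later be applied to laws such as that of $(X,Y)$ on $\R^{2n}$, which is again log-concave as a product.

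The main obstacle is the tail inequality itself. If one prefers not to cite it in that form, we would instead argue by the standard route. First, Borell's lemma applied to the sublevel set $K=\{|P|\le t\}$ is not directly available, since $K$ need not be convex. One therefore uses the Carbery--Wright small-ball estimate
\[
 \nu\{|P|\le\varepsilon\norm P_{L^2}\}\le C\varepsilon^{1/d}
\]
together with the Nazarov--Sodin--Volberg ``Remez via localization'' argument, which upgrades it to the exponential tail for $P$. We would cite that result with its explicit exponent rather than reprove it, and only check that the constants it supplies are universal. This holds because the degree is fixed at two, which is exactly why the final constant is $Cp^2$ rather than $C_dp^d$ with a hidden dependence.
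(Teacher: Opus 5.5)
Your proposal is correct and essentially the paper's argument. Both rest on the same dimension-free Bourgain--Carbery--Wright inequality for bounded-degree polynomials under log-concave measures. You cite its distributional form $\nu\{|P|>s\norm{P}_{L^1(\nu)}\}\le Ce^{-c\sqrt s}$ and integrate the tail; your $\Gamma(2p)$ computation and the final step $\norm{P}_{L^1}\le\norm{P}_{L^2}$ are fine. The paper instead cites the $L^q$ form, Theorem~7 of Carbery--Wright with $d=2$, $q=2p$, $r=4$, which gives $(\E|P|^p)^{1/(2p)}\le Cp(\E|P|^2)^{1/4}$ directly, and then simply squares.
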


\begin{proof}
By \cite[Theorem~7]{CarberyWright}, applied with $d=2$, $q=2p$, and $r=4$,
we have
\[
 (\E|P|^p)^{1/(2p)}
 \le Cp(\E|P|^2)^{1/4}.
\]
Squaring both sides yields
\[
 (\E|P|^p)^{1/p}\le Cp^2(\E|P|^2)^{1/2},
\]
which is \eqref{eq:CW}.
\end{proof}

Only finitely many fixed exponents are used below. Since the laws of $X$
and $Y$ have densities $e^{-\mathcal V_X}$ and $e^{-\mathcal V_Y}$ with
convex potentials, their product density on $\R^{2n}$ is
\[
 \exp\bigl(-\mathcal V_X(x)-\mathcal V_Y(y)\bigr),
\]
and is log-concave. The variable
$Z=\ip{X}{Y}$ is a polynomial of degree two. Isotropy and independence give
\[
 \E Z^2=n.
\]
Together with \eqref{eq:delta-L2} and Theorem~\ref{thm:CW}, this yields,
for every fixed $p\ge2$,
\begin{equation}\label{eq:moment-ledger}
 \norm{\Delta_X}_p\le C_p\sqrt n,
 \qquad
 \norm Z_p\le C_p\sqrt n.
\end{equation}
We also need moments of $R_X$.  For $p\ge4$,
\begin{align*}
 \|R_X\|_p^2
 &=\|R_X^2\|_{p/2}
 =\|n+\Delta_X\|_{p/2}\\
 &\le n+\|\Delta_X\|_{p/2}
 \le C_p n.
\end{align*}
Thus $\|R_X\|_p\le C_p\sqrt n$ for $p\ge4$.  If $2\le p<4$, then
$\|R_X\|_p\le\|R_X\|_4$.  Hence
\begin{equation}\label{eq:Rmom}
 \norm{R_X}_p\le C_p\sqrt n
 \qquad(p\ge2\text{ fixed}).
\end{equation}

\subsection{Radial deviations}

The next estimate follows from
\cite[Theorem~1.1 and the paragraph following it]{GuedonMilman}: take
$A=I_n$ and $\alpha=1$, and use the universal $\psi_1$ bound for
log-concave random vectors recorded there.

\begin{theorem}\label{thm:GM}
If $W$ is centered, isotropic, and log-concave in $\R^n$, then
\begin{equation}\label{eq:GM}
 \Pp\left\{\bigl||W|-\sqrt n\bigr|\ge t\sqrt n\right\}
 \le C\exp\bigl[-c\sqrt n\min(t^3,t)\bigr]
 \qquad(t\ge0).
\end{equation}
\end{theorem}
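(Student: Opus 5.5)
The plan is to obtain \eqref{eq:GM} directly from \cite[Theorem~1.1]{GuedonMilman}, so the work consists only of checking hypotheses and matching constants and exponents. In its general form, that theorem concerns a centered log-concave random vector $W$ with covariance (or normalizing) matrix $A$ whose one-dimensional marginals are $\psi_\alpha$ with constant $b_\alpha$. It gives a deviation bound for $|W|$ about $\sqrt{\Tr(A^2)}$ (or the analogous normalization). The exponent involves $\sqrt{n}$ times $\min(t^{2+\alpha},t)$, up to factors depending on $\alpha$ and $b_\alpha$. We specialize to $A=I_n$, which is exactly isotropy: $\E WW^T=I_n$. The normalizing quantity is then $\sqrt n$, so the event in the cited theorem becomes $\{||W|-\sqrt n|\ge t\sqrt n\}$.

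Next we take $\alpha=1$. With this choice the exponent $t^{2+\alpha}$ becomes $t^3$, so the two regimes of the cited bound are $t^3$ for $0\le t\le1$ and $t$ for $t\ge1$. Together these give $\min(t^3,t)$. The $\psi_1$ hypothesis holds with a universal constant for every centered isotropic log-concave vector. This is the standard consequence of Borell's lemma, which is the ``universal $\psi_1$ bound'' recorded in the paragraph following the cited theorem. Concretely, each marginal $\ip{W}{\theta}$ is a one-dimensional log-concave variable with variance one, so its Orlicz $\psi_1$ norm is at most a universal constant $b_1$. Substituting $b_1$ into the constants of the cited theorem, which depend only on $\alpha$ and $b_\alpha$, makes $C$ and $c$ universal.

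The only point needing care is bookkeeping. We must check how the cited statement is normalized: for instance, whether the deviation is written as $t\sqrt n$ or as $t$ with $\sqrt n$ absorbed into the exponent, and whether the exponent is stated as $\min(t^{2+\alpha},t)$ or split into cases. Any such rescaling $t\mapsto \kappa t$ by a universal factor $\kappa$ only changes $c$, since $\min((\kappa t)^3,\kappa t)\ge \min(\kappa^3,\kappa)\min(t^3,t)$. We must also handle any restriction in the source to a range such as $t\ge t_1$ or $t\le T$. For $t$ in a bounded initial range, the bound is trivial once $C\ge e^{c'}$ is enlarged appropriately, because the right-hand side is then at least $1$ whenever $c\sqrt n\min(t^3,t)\le 1$. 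For $t\ge1$ with $t\sqrt n>|W|$ impossible below zero, the lower tail is empty when $t\ge1$, so only the upper tail matters there. I expect this matching of the source's normalization to be the main (and only) obstacle; no new probabilistic argument is required.
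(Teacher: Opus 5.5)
Your proposal is correct and is the same argument as the paper, which likewise just invokes Gu\'edon--Milman's Theorem~1.1 with $A=I_n$ and $\alpha=1$, together with the universal $\psi_1$ constant for isotropic log-concave vectors (Borell's lemma). Two small cautions, neither of which matters here. Your fallback for a hypothetically restricted range of $t$ is not valid uniformly in $n$: for fixed $t\le1$ the right-hand side is at least $1$ only when $t\lesssim n^{-1/6}$, but the cited bound already holds for all $t\ge0$. Also, for general $\alpha$ the factor in the exponent is $n^{\alpha/2}$ rather than $\sqrt n$, which is harmless because you only use $\alpha=1$.
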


We shall use \eqref{eq:GM} only for fixed positive values of $t$.

\subsection{The difference of two independent copies}

The variables $X$ and $Y$ were fixed in \eqref{eq:upper-notation}; each has
the same log-concave law. Since reflection is an invertible linear map, $-Y$
is log-concave. By \cite[Proposition~3.5]{SaumardWellner}, the convolution
$X-Y=X+(-Y)$ is therefore log-concave. Independence and centering give
\[
 \E\widetilde X=0,
\]
and
\begin{align*}
 \E\widetilde X\widetilde X^T
 &=\frac12\bigl(\E XX^T+\E YY^T-\E X\,\E Y^T-\E Y\,\E X^T\bigr)\\
 &=I_n.
\end{align*}
Thus $\widetilde X$ is centered, isotropic, and log-concave, so
Theorem~\ref{thm:GM} applies to it.

\section{The law after averaging the direction}

Let $\bar\mu$ be the averaged law, defined by
\begin{equation}\label{eq:barmu}
 \bar\mu(E)=\int_{S^{n-1}}\mu_\theta(E)\,\dd\sigma_{n-1}(\theta)
\end{equation}
for every Borel set $E\subset\R$. This law depends only on the radius
$R_X=|X|$ fixed in \eqref{eq:upper-notation}.

Let $U$ be uniform on $S^{n-1}$ and define
\begin{equation}\label{eq:eta-n}
 \eta_n:=\sqrt n\,U_1.
\end{equation}
Choose $\eta_n$ independent of $R_X$.  For a fixed nonzero vector $x$, choose
an orthogonal matrix $O$ such that $Oe_1=x/|x|$. Since $O^T\Theta$ is
uniform on $S^{n-1}$,
\[
 \ip{x}{\Theta}
 =|x|\ip{e_1}{O^T\Theta}
 \stackrel{d}{=}|x|U_1.
\]
The conditional law of $\ip{X}{\Theta}$ given $X=x$ therefore depends on
$x$ only through $|x|$.  Integrating this conditional law in $X$ gives
\begin{equation}\label{eq:averaged-rep}
 \bar\mu
 =\Law\left(\frac{R_X}{\sqrt n}\eta_n\right).
\end{equation}

We now prove the required $n^{-1}$ estimate.

\begin{lemma}\label{lem:sphere-gauss}
For all $n\ge6$,
\begin{equation}\label{eq:sphere-gauss}
 W_1(\Law(\eta_n),\gamma_1)\le\frac Cn.
\end{equation}
\end{lemma}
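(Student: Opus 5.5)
We need to show that $W_1(\Law(\sqrt n U_1),\gamma_1)\le C/n$. The plan is to use the $L^1$ formula \eqref{eq:w1-cdf} together with the explicit density \eqref{eq:U1-density}. Write $f_n$ for the density of $\eta_n$ and $\phi$ for the Gaussian density. Then
\[
 f_n(s)=\frac{c_n}{\sqrt n}\Bigl(1-\frac{s^2}{n}\Bigr)^{(n-3)/2}\mathbf 1_{\{|s|\le\sqrt n\}} .
\]
Both laws are symmetric. For symmetric laws with densities, the antiderivative of $F_\mu-F_\nu$ vanishing at $-\infty$ gives the bound $W_1\le\int|s|\,|f_n(s)-\phi(s)|\,\dd s$. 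More directly, $F_\mu(t)-F_\nu(t)=\int_{-\infty}^t(f_n-\phi)$ for $t<0$, and Fubini gives
\[
 \int_\R|F_{\eta_n}-\Phi|\,\dd t\le\int_\R|s|\,|f_n(s)-\phi(s)|\,\dd s .
\]
So it suffices to prove a weighted $L^1$ density estimate $\int|s|\,|f_n-\phi|\,\dd s\le C/n$.

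The bulk region $|s|\le n^{1/4}$ (or $|s|\le \sqrt n/2$ with care) is handled by the following steps.

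1. From Stirling (or the standard bound on $\Gamma(x+1/2)/\Gamma(x)$), obtain $c_n/\sqrt n=(2\pi)^{-1/2}(1+O(1/n))$. Here $n\ge6$ keeps everything finite.

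2. Write $(1-s^2/n)^{(n-3)/2}=\exp\bigl(\tfrac{n-3}{2}\log(1-s^2/n)\bigr)$. For $s^2\le n/2$, use $\log(1-x)=-x-x^2/2+O(x^3)$ to get
\[
 \tfrac{n-3}{2}\log(1-s^2/n)=-\tfrac{s^2}{2}+\tfrac{3s^2}{2n}-\tfrac{s^4}{4n}+O\!\Bigl(\tfrac{s^4+s^6}{n^2}\Bigr).
\]
Hence $|f_n(s)/\phi(s)-1|\le C(1+s^4)/n\cdot e^{C(s^2+s^4)/n}$ on a region where the exponent correction stays bounded, say $|s|\le n^{1/4}$.

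3. On that region, $\int|s|\phi(s)(1+s^4)\,\dd s<\infty$ gives a contribution $O(1/n)$.

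The tails $|s|>n^{1/4}$ are treated separately. For $|s|\le\sqrt n$, the inequality $\log(1-x)\le -x$ gives $f_n(s)\le C e^{-(n-3)s^2/(2n)}\le Ce^{-s^2/4}$ for $n\ge6$. Therefore $\int_{|s|>n^{1/4}}|s|(f_n+\phi)\,\dd s\le C e^{-c\sqrt n}\le C/n$.

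The main obstacle is keeping the Taylor remainder uniform enough. The factor $e^{C(s^2+s^4)/n}$ is harmless only when $s^4\lesssim n$, which is exactly why the cutoff at $n^{1/4}$ is chosen: beyond it, the crude Gaussian domination of $f_n$ takes over. The constant estimate in step 1 is routine, since $\Gamma(x+\tfrac12)/(\sqrt x\,\Gamma(x))=1-\tfrac1{8x}+O(x^{-2})$ and here $x=(n-1)/2$. An alternative to step 1 is to normalize without knowing $c_n$: first show $f_n=\kappa_n\phi\,(1+O((1+s^4)/n))$ in the bulk, then use $\int f_n=\int\phi=1$ to force $\kappa_n=1+O(1/n)$. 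I would use this route if the Gamma asymptotics become messy.
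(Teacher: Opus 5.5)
Your proposal is correct and follows essentially the same route as the paper: you reduce $W_1$ to the weighted density bound $\int_\R|s|\,|f_n(s)-\phi(s)|\,\dd s$ via Tonelli, obtain the normalizing constant from the gamma-ratio asymptotics, expand $\log(1-s^2/n)$ to second order on a bulk region, and use the Gaussian domination $f_n(s)\le Ce^{-s^2/4}$ (valid for $n\ge6$) on the tails. The differences are cosmetic. You cut at $|s|\le n^{1/4}$ instead of the paper's $n^{1/8}$; this still works because $s^6/n^2\le s^4/n\le1$ there, so the exponential factor stays bounded, and the tail becomes $e^{-c\sqrt n}$ rather than $e^{-cn^{1/4}}$. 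You also use symmetry in the reduction step where the paper writes $D_n$ two-sidedly.
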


\begin{proof}
By \eqref{eq:U1-density} in Lemma~\ref{lem:sphere-basic}, $U_1$ has density
$c_n(1-u^2)^{(n-3)/2}$ on $[-1,1]$. Since $\eta_n=\sqrt n\,U_1$, the change of
variables $t=\sqrt n\,u$ gives
\begin{equation}\label{eq:fn}
 f_n(t)=\kappa_n\left(1-\frac{t^2}{n}\right)_+^{(n-3)/2},
 \qquad
 \kappa_n=
 \frac{\Gamma(n/2)}{\sqrt{\pi n}\,\Gamma((n-1)/2)}.
\end{equation}
The general gamma-ratio expansion in
\cite[Eq.~(5.11.13)]{NIST}, specialized to the present parameters, gives
\[
 \frac{\Gamma(x+1/2)}{\Gamma(x)}
 =x^{1/2}\left(1-\frac{1}{8x}+O(x^{-2})\right)
 \qquad(x\to\infty).
\]
Apply it with $x=(n-1)/2$.  Substitution into \eqref{eq:fn} gives
\begin{equation}\label{eq:kappa}
 \kappa_n=(2\pi)^{-1/2}\bigl(1+O(n^{-1})\bigr).
\end{equation}
Let $\phi(t)=(2\pi)^{-1/2}e^{-t^2/2}$.

For $|t|\le n^{1/8}$, put $u=t^2/n$.  From \eqref{eq:fn},
\[
 \log\frac{f_n(t)}{\phi(t)}
 =\log(\sqrt{2\pi}\,\kappa_n)
  +\frac{n-3}{2}\log(1-u)+\frac{t^2}{2}.
\]
Here $0\le u\le n^{-3/4}$.  The expansion
$\log(1-u)=-u-u^2/2+O(u^3)$ and \eqref{eq:kappa} therefore give
\[
 \left|
 \log\frac{f_n(t)}{\phi(t)}
 \right|
 \le \frac Cn(1+t^2+t^4).
\]
In this range the right-hand side is $O(n^{-1/2})$. Hence
$|e^u-1|\le e^{|u|}|u|$, applied to
$u=\log(f_n/\phi)$, gives
\begin{equation}\label{eq:local-density}
 |f_n(t)-\phi(t)|
 \le \frac Cn(1+t^4)e^{-ct^2}
 \qquad(|t|\le n^{1/8}).
\end{equation}
For $|t|\le\sqrt n$, the inequality $\log(1-u)\le-u$ gives
\[
 \left(1-\frac{t^2}{n}\right)^{(n-3)/2}
 \le \exp\left(-\frac{n-3}{2n}t^2\right)
 \le e^{-t^2/4}
 \qquad(n\ge6).
\]
Together with the uniform bound $\kappa_n\le C$ from \eqref{eq:kappa}, this
shows $f_n(t)\le Ce^{-t^2/4}$. Hence
\begin{equation}\label{eq:density-tail}
 \int_{|t|>n^{1/8}}|t|\bigl(f_n(t)+\phi(t)\bigr)\dd t
 \le Ce^{-c n^{1/4}}.
\end{equation}
Combining \eqref{eq:local-density} and \eqref{eq:density-tail} yields
\begin{equation}\label{eq:weighted-density}
 \int_\R |t|\,|f_n(t)-\phi(t)|\dd t\le\frac Cn.
\end{equation}

Let $h=f_n-\phi$. Since both $f_n$ and $\phi$ integrate to one,
$\int_\R h=0$. Put $D_n(t)=\int_{-\infty}^t h(s)\,\dd s$. For $t\ge0$ we may
also write $D_n(t)=-\int_t^\infty h(s)\,\dd s$. Therefore
\begin{align*}
 \int_\R|D_n(t)|\,\dd t
 &\le\int_{-\infty}^0\int_{-\infty}^t|h(s)|\,\dd s\,\dd t
 +\int_0^\infty\int_t^\infty|h(s)|\,\dd s\,\dd t\\
 &=\int_\R|s|\,|h(s)|\,\dd s,
\end{align*}
where Tonelli's theorem was used in the last equality. By
Lemma~\ref{lem:w1-line}, the left-hand side is
$W_1(\Law(\eta_n),\gamma_1)$. Now use \eqref{eq:weighted-density}.
\end{proof}

\begin{lemma}\label{lem:scale}
Let
\[
 \tau:=\frac{R_X}{\sqrt n}.
\]
Then
\begin{equation}\label{eq:scale-result}
 W_1(\Law(\tau\eta_n),\Law(\eta_n))\le\frac Cn.
\end{equation}
\end{lemma}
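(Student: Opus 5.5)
The plan is to prove \eqref{eq:scale-result} directly from the distribution-function formula \eqref{eq:w1-cdf}, expanding in the squared scale $r:=\tau^2=R_X^2/n$ rather than in $\tau$ itself. The reason is that $\E(r-1)=\E\Delta_X/n=0$ exactly, while $\E(\tau-1)$ need not vanish. Write $F_n$ for the distribution function of $\eta_n$ and set
\[
 H(\rho,t):=\Pp\{\sqrt\rho\,\eta_n\le t\}=F_n(t/\sqrt\rho),\qquad \rho>0.
\]
By independence of $\eta_n$ and $R_X$, the distribution function of $\tau\eta_n$ is $\E H(r,t)$. Hence
\[
 W_1(\Law(\tau\eta_n),\Law(\eta_n))=\int_\R\bigl|\E[H(r,t)-H(1,t)]\bigr|\dd t .
\]
Small $n$ (say $n\le n_0$) are trivial, since both laws have first moments bounded by a universal constant. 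So assume $n$ is large.

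On the good event $G=\{|r-1|\le 1/2\}$, Taylor's formula in $\rho$ at $\rho=1$ gives
\[
 H(r,t)-H(1,t)=\partial_\rho H(1,t)(r-1)+\rho_2(r,t),\qquad
 |\rho_2(r,t)|\le \tfrac12 (r-1)^2\sup_{\rho\in[1/2,3/2]}|\partial_\rho^2H(\rho,t)| .
\]
Compute $\partial_\rho H=-\tfrac12 t\rho^{-3/2}f_n(t/\sqrt\rho)$, and $\partial_\rho^2H$ is a combination of $t\rho^{-5/2}f_n(t/\sqrt\rho)$ and $t^2\rho^{-3}f_n'(t/\sqrt\rho)$. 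From \eqref{eq:fn}, $f_n'(s)=-\frac{n-3}{n}s\,\kappa_n(1-s^2/n)_+^{(n-5)/2}$. The argument giving $f_n\le Ce^{-s^2/4}$ in the proof of Lemma~\ref{lem:sphere-gauss} then gives $|f_n'(s)|\le C|s|e^{-cs^2}$ for $n\ge n_0$. The exponent $(n-5)/2\ge1$ makes $f_n$ continuously differentiable across $|s|=\sqrt n$, so there is no boundary contribution. Consequently
\[
 \sup_{\rho\in[1/2,3/2]}|\partial_\rho^2H(\rho,t)|\le C(1+|t|^3)e^{-ct^2},
\]
which is integrable in $t$.

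The linear term is handled using the cancellation. Write $\E[(r-1)\mathbf 1_G]=-\E[(r-1)\mathbf 1_{G^c}]$. Its absolute value is at most $\|r-1\|_2\Pp(G^c)^{1/2}$. By \eqref{eq:delta-L2}, $\|r-1\|_2\le\sqrt{8/n}$. By Theorem~\ref{thm:GM} applied to $X$ with a fixed $t$ (the event $|r-1|>1/2$ forces $\bigl||X|-\sqrt n\bigr|\ge c\sqrt n$), $\Pp(G^c)\le Ce^{-c\sqrt n}$. Multiplying by $\int|\partial_\rho H(1,t)|\dd t\le C$ shows that this term is exponentially small.

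The quadratic term contributes at most
\[
 C\,\E(r-1)^2\int(1+|t|^3)e^{-ct^2}\dd t\le C\,\E\Delta_X^2/n^2\le C/n
\]
by \eqref{eq:delta-L2}. On the bad event $G^c$ I will not expand at all. By \eqref{eq:w1-cdf}, $\int|H(r,t)-H(1,t)|\dd t=W_1(\Law(\sqrt r\eta_n),\Law(\eta_n))\le|\sqrt r-1|\,\E|\eta_n|$ for each fixed $r$, using the coupling $\sqrt r\,\eta_n$ versus $\eta_n$. After Tonelli, this piece is at most $C\,\E[(1+\tau)\mathbf 1_{G^c}]\le C(1+\|\tau\|_2)\Pp(G^c)^{1/2}$. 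This is $O(e^{-c\sqrt n})$ by \eqref{eq:Rmom} and Theorem~\ref{thm:GM}. Summing the three contributions gives \eqref{eq:scale-result}.

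The main obstacle is the uniform-in-$\rho$ Gaussian-type bound on $\partial_\rho^2H$. It requires controlling $f_n'$ uniformly in $n$, including near the edge of the support, and I expect this to follow from the explicit form \eqref{eq:fn} together with \eqref{eq:kappa}. The other essential point, and the one that gives the $n^{-1}$ rate rather than $n^{-1/2}$, is to expand in $r=\tau^2$ so that the first-order term has exact mean zero, up to the exponentially small truncation.
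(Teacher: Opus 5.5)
Your proof is correct and has the same architecture as the paper's. Both truncate to a good radial event using Theorem~\ref{thm:GM}, Taylor-expand the distribution function of the rescaled $\eta_n$ in the scale, cancel the first-order term in mean, bound the second-order term by $\E\Delta_X^2/n^2\le 8/n$, and handle the bad event with the exact scaling identity $W_1(\Law(s\eta_n),\Law(\eta_n))=|s-1|\,\E|\eta_n|$.

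There are two local differences. First, the paper expands in $\tau$ itself, where $\E(\tau-1)=-\tfrac12\E(\tau-1)^2=O(n^{-1})$ already suffices, so switching to $r=\tau^2$ is a convenience rather than what produces the $n^{-1}$ rate. Second, the paper uses the integral form of the Taylor remainder together with Tonelli, so it only needs $\int_\R|\partial_s^2F_{n,s}(t)|\dd t\le C$ for each fixed $s\in[1/2,3/2]$; this follows exactly from $\int_\R u^2|f_n'(u)|\dd u=2\E|\eta_n|$, which makes your pointwise Gaussian bound on $f_n'$ (the step you flagged as the main obstacle) unnecessary.
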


\begin{proof}
Since $\E\tau^2=1$,
\begin{equation}\label{eq:mean-scale}
 \E(\tau-1)=-\frac12\E(\tau-1)^2.
\end{equation}
Moreover,
\begin{equation}\label{eq:scale-L2}
 \E(\tau-1)^2
 =\frac1n\E(R_X-\sqrt n)^2
 \le\frac1{n^2}\E(R_X^2-n)^2
 \le\frac8n.
\end{equation}

Let $F_n$ be the distribution function of $\eta_n$ and, for $s>0$, set
\[
 F_{n,s}(t):=F_n(t/s).
\]
Thus $F_{n,s}$ is the distribution function of $s\eta_n$. For $n\ge6$,
differentiation of the explicit density in \eqref{eq:fn} gives
\begin{align}
 \partial_s F_{n,s}(t)
 &=-\frac{t}{s^2}f_n(t/s),\label{eq:H1}\\
 \partial_s^2F_{n,s}(t)
 &=\frac{2t}{s^3}f_n(t/s)
 +\frac{t^2}{s^4}f_n'(t/s).\label{eq:H2}
\end{align}
For $1/2\le s\le3/2$,
\begin{equation}\label{eq:H-derivative-L1}
 \int_\R|\partial_sF_{n,s}(t)|\dd t\le C,
 \qquad
 \int_\R|\partial_s^2F_{n,s}(t)|\dd t\le C.
\end{equation}
Indeed, after the change of variables $t=su$,
\[
 \int_\R|\partial_sF_{n,s}(t)|\,\dd t
 =\int_\R|u|f_n(u)\,\dd u=\E|\eta_n|\le1,
\]
where the last inequality follows from $\E\eta_n^2=1$. For the second
derivative, the triangle inequality and the same change of variables reduce
the estimate to $\E|\eta_n|$ and
\[
 \int_\R u^2|f_n'(u)|\dd u
 =2\int_0^{\sqrt n}u^2(-f_n'(u))\dd u
 =4\int_0^{\sqrt n}u f_n(u)\dd u
 =2\E|\eta_n|.
\]
The integration by parts has no boundary term because $f_n(\sqrt n)=0$ and
$u^2f_n(u)=0$ at $u=0$.

Let
\[
 A_\tau:=\{1/2\le\tau\le3/2\}.
\]
For every fixed $s\in[1/2,3/2]$, Taylor's formula with integral remainder,
applied to the map $r\mapsto F_{n,r}(t)$ at $r=1$, gives
\[
 F_{n,s}(t)-F_{n,1}(t)
 =(s-1)\left.\partial_rF_{n,r}(t)\right|_{r=1}
 +(s-1)^2\int_0^1(1-u)
 \partial_r^2F_{n,1+u(s-1)}(t)\,\dd u.
\]
Substitute $s=\tau$, multiply by $\mathbf1_{A_\tau}$, take expectation, and
then integrate in $t$. By \eqref{eq:H-derivative-L1},
\begin{align}
 &\int_\R\left|
 \E\bigl[(F_{n,\tau}(t)-F_{n,1}(t))\mathbf1_{A_\tau}\bigr]
 \right|\dd t\notag\\
 &\hspace{2cm}\le
 C\left|\E[(\tau-1)\mathbf1_{A_\tau}]\right|
 +C\E(\tau-1)^2.\label{eq:scale-good}
\end{align}
Using \eqref{eq:mean-scale},
\begin{align*}
 \left|\E[(\tau-1)\mathbf1_{A_\tau}]\right|
 &\le \frac12\E(\tau-1)^2
   +\E[|\tau-1|\mathbf1_{A_\tau^c}]\\
 &\le \frac4n
   +\|\tau-1\|_2\,\Pp(A_\tau^c)^{1/2}.
\end{align*}
The event $A_\tau^c$ is
\[
 \{|R_X-\sqrt n|>\tfrac12\sqrt n\}.
\]
Theorem~\ref{thm:GM}, applied to $X$ with $t=1/2$, therefore gives
$\Pp(A_\tau^c)\le Ce^{-c\sqrt n}$. Together with \eqref{eq:scale-L2}, this
proves
\begin{equation}\label{eq:truncated-scale-mean}
 \left|\E[(\tau-1)\mathbf1_{A_\tau}]\right|
 \le\frac Cn+Ce^{-c\sqrt n}.
\end{equation}

For fixed $s>0$, the generalized inverse of the distribution function of
$s\eta_n$ is $sF_n^{-1}$. The monotone-coupling formula in
Lemma~\ref{lem:w1-line} gives
\begin{align*}
 W_1(\Law(s\eta_n),\Law(\eta_n))
 &=\int_0^1|sF_n^{-1}(r)-F_n^{-1}(r)|\,\dd r\\
 &=|s-1|\int_0^1|F_n^{-1}(r)|\,\dd r\\
 &=|s-1|\E|\eta_n|.
\end{align*}
Using also the distribution-function formula in Lemma~\ref{lem:w1-line},
\begin{equation}\label{eq:scaled-cdf-distance}
 \int_\R|F_{n,s}(t)-F_{n,1}(t)|\dd t
 =|s-1|\E|\eta_n|.
\end{equation}
Apply \eqref{eq:scaled-cdf-distance} with $s=\tau$ on $A_\tau^c$. Then
Cauchy--Schwarz, \eqref{eq:scale-L2}, and the bound on
$\Pp(A_\tau^c)$ give
\begin{align}
 &\E\left[\mathbf1_{A_\tau^c}
 \int_\R|F_{n,\tau}(t)-F_{n,1}(t)|\dd t\right]\notag\\
 &=\E|\eta_n|\,\E[|\tau-1|\mathbf1_{A_\tau^c}]\notag\\
 &\le C\|\tau-1\|_2\Pp(A_\tau^c)^{1/2}
 \le Ce^{-c\sqrt n}.\label{eq:scale-bad}
\end{align}
Combining \eqref{eq:scale-good}, \eqref{eq:truncated-scale-mean},
\eqref{eq:scale-L2}, and \eqref{eq:scale-bad} proves
\eqref{eq:scale-result}.
\end{proof}

\begin{proposition}\label{prop:averaged}
The averaged law satisfies
\begin{equation}\label{eq:averaged}
 W_1(\bar\mu,\gamma_1)\le\frac Cn.
\end{equation}
\end{proposition}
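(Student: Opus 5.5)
The plan is to combine the representation \eqref{eq:averaged-rep} with the two preceding lemmas through the triangle inequality for $W_1$. By \eqref{eq:averaged-rep}, $\bar\mu=\Law(\tau\eta_n)$ with $\tau=R_X/\sqrt n$ independent of $\eta_n$. For $n\ge6$ we then write
\[
 W_1(\bar\mu,\gamma_1)
 \le W_1\bigl(\Law(\tau\eta_n),\Law(\eta_n)\bigr)
 +W_1\bigl(\Law(\eta_n),\gamma_1\bigr).
\]
Lemma~\ref{lem:scale} bounds the first term by $C/n$, and Lemma~\ref{lem:sphere-gauss} bounds the second by $C/n$. Together they give \eqref{eq:averaged} for all $n\ge6$.

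It remains to treat $1\le n\le5$. Here no decay is needed; it suffices that $W_1(\bar\mu,\gamma_1)$ is bounded by a universal constant, since then enlarging $C$ (say by a factor of $5$) absorbs these cases. For any coupling, the definition \eqref{eq:w1-definition} gives $W_1(\mu,\nu)\le\int|x|\,\dd\mu+\int|y|\,\dd\nu$. By Cauchy--Schwarz,
\[
 \int|x|\,\dd\bar\mu=\E|\ip{X}{\Theta}|\le(\E\ip{X}{\Theta}^2)^{1/2}=1,
\]
using isotropy and $|\Theta|=1$. Likewise $\int|y|\,\dd\gamma_1\le1$. Hence $W_1(\bar\mu,\gamma_1)\le2\le 10/n$ for $n\le5$. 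For $n=1$ the sphere $S^0=\{\pm1\}$ carries the normalized counting measure and the same moment bound applies.

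There is no substantive obstacle; the analytic work was done in Lemmas~\ref{lem:sphere-gauss} and~\ref{lem:scale}. The only points to check are that the independence of $\eta_n$ and $R_X$ used in Lemma~\ref{lem:scale} matches the representation \eqref{eq:averaged-rep}, which holds by construction, and that the small-dimensional cases are covered by the trivial moment bound above.
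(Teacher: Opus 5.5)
Your proof is correct and matches the paper's: the paper also writes $\bar\mu=\Law(\tau\eta_n)$ via \eqref{eq:averaged-rep} and bounds the two terms of the triangle inequality by Lemmas~\ref{lem:scale} and~\ref{lem:sphere-gauss}. Your explicit treatment of $n\le5$ via the trivial bound $W_1(\bar\mu,\gamma_1)\le2$ is a small but worthwhile addition, since those lemmas are proved only for $n\ge6$, whereas the paper handles bounded dimensions only later, in the proof of Theorem~\ref{thm:main}.
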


\begin{proof}
By \eqref{eq:averaged-rep}, $\bar\mu=\Law(\tau\eta_n)$.  Hence
\[
 W_1(\bar\mu,\gamma_1)
 \le W_1(\Law(\tau\eta_n),\Law(\eta_n))
   +W_1(\Law(\eta_n),\gamma_1).
\]
The two terms are bounded by Lemmas~\ref{lem:scale} and
\ref{lem:sphere-gauss}, respectively.
\end{proof}

\section{An exact kernel for directional fluctuations}\label{sec:kernel}

\subsection{A weighted distribution-function integral}

For probability measures $\mu,\nu$ on $\R$ with finite third moments, define
\begin{equation}\label{eq:J-def}
 \cJ(\mu,\nu)
 :=\int_\R(1+t^2)\bigl(F_\mu(t)-F_\nu(t)\bigr)^2\dd t.
\end{equation}

\begin{lemma}\label{lem:CDF-reduction}
For all such $\mu,\nu$,
\begin{equation}\label{eq:W1-J}
 W_1(\mu,\nu)\le\sqrt\pi\,\cJ(\mu,\nu)^{1/2}.
\end{equation}
\end{lemma}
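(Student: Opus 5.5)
By the distribution-function formula \eqref{eq:w1-cdf} in Lemma~\ref{lem:w1-line}, the plan is to write
\[
 W_1(\mu,\nu)=\int_\R|F_\mu(t)-F_\nu(t)|\,\dd t
 =\int_\R (1+t^2)^{-1/2}\cdot(1+t^2)^{1/2}|F_\mu(t)-F_\nu(t)|\,\dd t,
\]
and then apply the Cauchy--Schwarz inequality to the two factors. This gives
\[
 W_1(\mu,\nu)\le\left(\int_\R\frac{\dd t}{1+t^2}\right)^{1/2}
 \left(\int_\R(1+t^2)\bigl(F_\mu(t)-F_\nu(t)\bigr)^2\dd t\right)^{1/2}.
\]
The first integral equals $\pi$, since $\arctan$ increases by $\pi$ over $\R$. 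The second factor is exactly $\cJ(\mu,\nu)^{1/2}$ by \eqref{eq:J-def}. Together these yield \eqref{eq:W1-J}.

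The only points to check are that the quantities involved are finite, so that the manipulation is legitimate. Since $\mu$ and $\nu$ have finite first moments (indeed third), Lemma~\ref{lem:w1-line} applies and $W_1$ is finite. If $\cJ(\mu,\nu)=\infty$, the inequality is trivial. Otherwise, Cauchy--Schwarz holds for nonnegative measurable functions with no integrability hypothesis beyond measurability.

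Finiteness of $\cJ$ under the third-moment assumption is not needed for the inequality itself, but it can be verified. For $t>0$ we have $|F_\mu(t)-F_\nu(t)|\le \Pp(|U|>t)+\Pp(|V|>t)$. Markov's inequality then bounds this by $C/t^3$, so the integrand $(1+t^2)(F_\mu-F_\nu)^2$ is integrable at infinity. There is no real obstacle here; the lemma is a one-line weighted Cauchy--Schwarz argument.
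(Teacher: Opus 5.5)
Your proposal is correct and is essentially the paper's proof: you write $W_1$ via the distribution-function formula of Lemma~\ref{lem:w1-line}, insert the weight $(1+t^2)^{-1/2}(1+t^2)^{1/2}$, apply Cauchy--Schwarz, and use $\int_\R(1+t^2)^{-1}\,\dd t=\pi$. Your side remarks on finiteness are accurate, though, as you note, they are not needed for the inequality itself.
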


\begin{proof}
Lemma~\ref{lem:w1-line} gives
\[
 W_1(\mu,\nu)=\int_\R|F_\mu(t)-F_\nu(t)|\,\dd t.
\]
Write the integrand as
\[
 \frac{1}{\sqrt{1+t^2}}
 \bigl(\sqrt{1+t^2}\,|F_\mu(t)-F_\nu(t)|\bigr)
\]
and apply Cauchy--Schwarz. Since
$\int_\R(1+t^2)^{-1}\,\dd t=\pi$, this gives
\eqref{eq:W1-J}.
\end{proof}

Let
\begin{equation}\label{eq:Psi}
 \Psi(t):=t+\frac{t^3}{3},
 \qquad
 K(u,v):=|\Psi(u)-\Psi(v)|.
\end{equation}

\begin{lemma}\label{lem:cdf-identity}
Let $U,U'$ be independent with law $\mu$, and let $V,V'$ be independent
with law $\nu$, all four variables independent. Then
\begin{equation}\label{eq:cdf-identity}
 \cJ(\mu,\nu)
 =\E K(U,V)
 -\frac12\E K(U,U')
 -\frac12\E K(V,V').
\end{equation}
\end{lemma}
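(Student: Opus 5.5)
The plan is to reduce the identity to the classical energy-distance formula after the change of variables $s=\Psi(t)$. The map $\Psi$ is a strictly increasing $C^1$ bijection of $\R$ with $\Psi'(t)=1+t^2$. Put $\tilde\mu=\Psi_\#\mu$ and $\tilde\nu=\Psi_\#\nu$. Then $F_{\tilde\mu}(s)=F_\mu(\Psi^{-1}(s))$, and substituting $s=\Psi(t)$ gives
\[
 \cJ(\mu,\nu)=\int_\R\bigl(F_{\tilde\mu}(s)-F_{\tilde\nu}(s)\bigr)^2\dd s.
\]
Since $|\Psi(u)|\le |u|+|u|^3/3$, the finite third moments of $\mu,\nu$ imply that $\tilde\mu,\tilde\nu$ have finite first moments. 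Every expectation in \eqref{eq:cdf-identity} is therefore finite, and so is the right-hand side above. Also $K(U,V)=|\tilde U-\tilde V|$ with $\tilde U=\Psi(U)$ and $\tilde V=\Psi(V)$. So it suffices to prove, for laws $\alpha,\beta$ on $\R$ with finite first moments,
\[
 \int_\R(F_\alpha-F_\beta)^2\dd s=\E|A-B|-\tfrac12\E|A-A'|-\tfrac12\E|B-B'|,
\]
where $A,A'\sim\alpha$, $B,B'\sim\beta$, and all four are independent.

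For this one-dimensional identity I would use the layer-cake formula $|a-b|=\int_\R\bigl(\mathbf1\{a\le s<b\}+\mathbf1\{b\le s<a\}\bigr)\dd s$. Taking expectations with Tonelli gives
\[
 \E|A-B|=\int_\R\bigl(F_\alpha(1-F_\beta)+F_\beta(1-F_\alpha)\bigr)\dd s,
\]
and in the same way $\E|A-A'|=\int 2F_\alpha(1-F_\alpha)\,\dd s$ and $\E|B-B'|=\int 2F_\beta(1-F_\beta)\,\dd s$. Pointwise,
\[
 F_\alpha(1-F_\beta)+F_\beta(1-F_\alpha)-F_\alpha(1-F_\alpha)-F_\beta(1-F_\beta)=(F_\alpha-F_\beta)^2 .
\]
Combining the three integrals then gives the identity.

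The only delicate point is integrability: we must not subtract integrals that are each infinite. Each of the three integrands is bounded by $F(1-G)+G(1-F)$ for suitable choices of $F,G$. For $s\ge0$ this is at most $(1-F)+(1-G)$, and for $s<0$ it is at most $F+G$. Hence each integrand is integrable exactly when the first moments are finite, since $\int_0^\infty(1-F)\,\dd s$ and $\int_{-\infty}^0F\,\dd s$ are bounded by $\E|A|$. All integrals are finite, so the linear combination is legitimate. With that checked, the transport via $\Psi$ finishes the proof. I expect no real obstacle beyond recording this moment bookkeeping.
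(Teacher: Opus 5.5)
Your proposal is correct and follows essentially the same route as the paper: the substitution $s=\Psi(t)$ reduces $\cJ(\mu,\nu)$ to the unweighted integral $\int_\R(F_{\Psi(U)}-F_{\Psi(V)})^2\,\dd s$, and the identity $|a-b|=\int_\R(\mathbf1_{\{a\le s\}}-\mathbf1_{\{b\le s\}})^2\,\dd s$ together with Tonelli gives the three-term formula. Your integrability check is a little more explicit than the paper's, which only notes that finite third moments make $\E|\Psi(U)|+\E|\Psi(V)|$ finite.
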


\begin{proof}
The map $\Psi$ is strictly increasing and $\Psi'(t)=1+t^2$.  Therefore
\[
 F_{\Psi(U)}(\Psi(t))=\Pp\{\Psi(U)\le\Psi(t)\}=F_U(t),
\]
and the same identity holds for $V$.  With $s=\Psi(t)$ and
$\dd s=(1+t^2)\dd t$, we obtain
\[
 \cJ(\mu,\nu)
 =\int_\R\bigl(F_{\Psi(U)}(s)-F_{\Psi(V)}(s)\bigr)^2\dd s.
\]
We prove the needed identity. For real numbers $a,b$,
\[
 |a-b|=\int_\R
 \bigl(\mathbf1_{\{a\le s\}}-\mathbf1_{\{b\le s\}}\bigr)^2\,\dd s.
\]
Let $P,P'$ be independent copies and let $Q,Q'$ be independent copies, with
all four variables independent. Tonelli's theorem gives
\begin{align*}
 \E|P-Q|
 &=\int_\R\E
 \bigl(\mathbf1_{\{P\le s\}}-\mathbf1_{\{Q\le s\}}\bigr)^2\,\dd s\\
 &=\int_\R\bigl(F_P(s)+F_Q(s)-2F_P(s)F_Q(s)\bigr)\,\dd s.
\end{align*}
Similarly,
\[
 \frac12\E|P-P'|
 =\int_\R F_P(s)(1-F_P(s))\,\dd s
\]
and
\[
 \frac12\E|Q-Q'|
 =\int_\R F_Q(s)(1-F_Q(s))\,\dd s.
\]
Subtracting the last two formulas from the first gives
\[
 \int_\R(F_P(s)-F_Q(s))^2\,\dd s.
\]
The finite-third-moment assumption implies
$\E|\Psi(U)|+\E|\Psi(V)|<\infty$, so every displayed integral is finite.
Apply the identity with $P=\Psi(U)$ and $Q=\Psi(V)$.
\end{proof}

Define the averaged weighted squared error
\begin{equation}\label{eq:En-def}
 \cE_n:=\E_\Theta\cJ(\mu_\Theta,\bar\mu).
\end{equation}

\begin{lemma}
\label{lem:same-independent}
Let $\Theta,\Theta'$ be independent uniform directions, independent of
$(X,Y)$. Then
\begin{align}
 \cE_n
 =\frac12\bigl(&\E K(\ip{X}{\Theta},\ip{Y}{\Theta'})
 \notag\\
 &-\E K(\ip{X}{\Theta},\ip{Y}{\Theta})\bigr).
 \label{eq:same-independent}
\end{align}
\end{lemma}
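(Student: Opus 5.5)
We apply Lemma~\ref{lem:cdf-identity} for each fixed direction $\theta$, with $\mu=\mu_\theta$ and $\nu=\bar\mu$, and then average over $\theta$. Finite third moments hold because log-concave laws have all moments; for $\bar\mu$ this follows from \eqref{eq:averaged-rep} and \eqref{eq:Rmom}. We realize the four independent variables as follows. The variables $U,U'$ with law $\mu_\theta$ are $\ip{X}{\theta}$ and $\ip{Y}{\theta}$. The variables $V,V'$ with law $\bar\mu$ are $\ip{X'}{\Theta_1}$ and $\ip{Y'}{\Theta_2}$, where $X',Y'$ are further independent copies and $\Theta_1,\Theta_2$ are independent uniform directions. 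Lemma~\ref{lem:cdf-identity} then gives
\[
 \cJ(\mu_\theta,\bar\mu)=\E K(\ip{X}{\theta},\ip{X'}{\Theta_1})-\tfrac12\E K(\ip{X}{\theta},\ip{Y}{\theta})-\tfrac12\E K(\ip{X'}{\Theta_1},\ip{Y'}{\Theta_2}).
\]

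Next we replace $\theta$ by an independent uniform $\Theta$ and take expectation; Tonelli applies since $K\ge0$ and all moments are finite. We then identify each of the three terms.

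The middle term becomes $\tfrac12\E K(\ip{X}{\Theta},\ip{Y}{\Theta})$. For the first term, the pair $(\ip{X}{\Theta},\ip{X'}{\Theta_1})$ consists of two independent random variables, each with law $\bar\mu$: the two components use disjoint independent sources of randomness. The same is true for the third term. Hence both equal $\E K(\ip{X}{\Theta},\ip{Y}{\Theta'})$, where $X,Y$ are independent copies and $\Theta,\Theta'$ are independent uniform directions. Combining gives
\[
 \cE_n=\E K(\ip{X}{\Theta},\ip{Y}{\Theta'})-\tfrac12\E K(\ip{X}{\Theta},\ip{Y}{\Theta'})-\tfrac12\E K(\ip{X}{\Theta},\ip{Y}{\Theta}),
\]
which is \eqref{eq:same-independent}.

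The only step needing care is measurability and integrability when averaging the identity over $\theta$. The map $\theta\mapsto\cJ(\mu_\theta,\bar\mu)$ is measurable because each term is an expectation of a jointly measurable nonnegative function. All terms are finite, since $K(u,v)\le C(1+|u|^3+|v|^3)$ and $\E|X|^3\le Cn^{3/2}$ by \eqref{eq:Rmom}. Therefore the subtraction in the averaged identity is legitimate.
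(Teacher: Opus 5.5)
Your proof is correct and follows the same route as the paper. You apply Lemma~\ref{lem:cdf-identity} with $\mu=\mu_\theta$ and $\nu=\bar\mu$, average over $\theta$, and note that the cross term and the $\bar\mu$-self term both reduce to $\E K$ evaluated at two independent $\bar\mu$-samples; your explicit realization via independent copies $X',Y'$ and the integrability check $K(u,v)\le C(1+|u|^3+|v|^3)$ make precise steps the paper leaves implicit.
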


\begin{proof}
Let $\Theta,\Theta',\Theta''$ be independent uniform directions, also
independent of $(X,Y)$. Lemma
\ref{lem:cdf-identity}, followed by averaging in $\Theta$, gives
\begin{align*}
 \cE_n
 &=\E K(\ip{X}{\Theta},\ip{Y}{\Theta'})
 -\frac12\E K(\ip{X}{\Theta},\ip{Y}{\Theta})\\
 &\quad-\frac12\E K(\ip{X}{\Theta'},\ip{Y}{\Theta''}).
\end{align*}
The first and third expectations are equal because both use two independent
uniform directions. Combining their coefficients gives
\eqref{eq:same-independent}.
\end{proof}

We now compute the same-direction formula exactly. For fixed $x,y\in\R^n$,
write
\begin{equation}\label{eq:invariants}
 a=|x|^2,
 \qquad b=|y|^2,
 \qquad z=\ip{x}{y},
 \qquad A=a+b,
 \qquad D=a-b,
\end{equation}
and let $m_n=\E|\Theta_1|$.  Integrating the density in
\eqref{eq:U1-density} gives
\begin{equation}\label{eq:mn}
 m_n=\frac{\Gamma(n/2)}{\sqrt\pi\,\Gamma((n+1)/2)}.
\end{equation}
The upper bound $m_n\le n^{-1/2}$ follows from Cauchy--Schwarz and
$\E\Theta_1^2=1/n$.  For the lower bound, H\"older's inequality gives
\[
 \E\Theta_1^2
 \le (\E|\Theta_1|)^{2/3}(\E\Theta_1^4)^{1/3}.
\]
Since Lemma~\ref{lem:sphere-basic} gives
$\E\Theta_1^2=1/n$ and $\E\Theta_1^4=3/[n(n+2)]$, rearranging yields
\[
 m_n\ge \frac{\sqrt{n+2}}{\sqrt3\,n}.
\]
Consequently
\begin{equation}\label{eq:mn-bounds}
 c n^{-1/2}\le m_n\le n^{-1/2}.
\end{equation}

\begin{proposition}\label{prop:formula}
For $A-2z=|x-y|^2>0$,
\begin{align}
 S_n(a,b,z)
 &:=\E_\Theta
 K(\ip{x}{\Theta},\ip{y}{\Theta})\notag\\
 &=m_n\left[
 \sqrt{A-2z}
 +\frac{(5A+2z)\sqrt{A-2z}}{12(n+1)}
 +\frac{D^2}{4(n+1)\sqrt{A-2z}}
 \right].\label{eq:formula}
\end{align}
The right-hand side extends continuously to $x=y$, where it is zero.
\end{proposition}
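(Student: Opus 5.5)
The plan is to reduce the spherical average to one-dimensional moments of $\Theta_1$, using only the algebraic structure of $\Psi$. Put $u=\ip{x}{\Theta}$ and $v=\ip{y}{\Theta}$. Factoring $\Psi(u)-\Psi(v)$ gives
\[
 \Psi(u)-\Psi(v)=(u-v)\Bigl(1+\tfrac{u^2+uv+v^2}{3}\Bigr).
\]
Since $u^2+uv+v^2\ge0$, the second factor is positive, so the absolute value falls only on $u-v$. Next, introduce $w=x-y$, $s=x+y$, $p=\ip{w}{\Theta}=u-v$ and $q=\ip{s}{\Theta}=u+v$. The identity $u^2+uv+v^2=\tfrac14\bigl(3(u+v)^2+(u-v)^2\bigr)$ then yields
\[
 K(u,v)=|p|+\tfrac1{12}|p|^3+\tfrac14|p|q^2 .
\]
Hence $S_n$ is determined by three spherical integrals: $\E|p|$, $\E|p|^3$ and $\E|p|q^2$.

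\emph{Reduction to coordinates.} By rotation invariance (as in the derivation of \eqref{eq:averaged-rep}) I may assume $w=|w|e_1$, where $|w|^2=A-2z>0$. I then decompose $s=\alpha e_1+s_\perp$ with $s_\perp\perp e_1$. Here $\alpha=\ip{s}{w}/|w|=D/|w|$, because $\ip{x+y}{x-y}=a-b$, and $|s|^2=A+2z$. A further rotation fixing $e_1$ sends $s_\perp$ to $|s_\perp|e_2$. Expanding $q^2$ and discarding the term that is odd in $\Theta_2$ (Lemma~\ref{lem:sphere-basic}) gives
\[
 \E|p|q^2=|w|\bigl(\alpha^2\E|\Theta_1|^3+|s_\perp|^2\E|\Theta_1|\Theta_2^2\bigr),
 \qquad |s_\perp|^2=|s|^2-\alpha^2 .
\]

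\emph{The two moment identities.} Everything now reduces to
\[
 \E|\Theta_1|^3=\frac{2m_n}{n+1},
 \qquad
 \E|\Theta_1|\Theta_2^2=\frac{m_n}{n+1}.
\]
For the first identity I would integrate the density \eqref{eq:U1-density} directly. In beta-function form, $\E|\Theta_1|^k$ is proportional to $\Gamma(\tfrac{k+1}2)/\Gamma(\tfrac{n+k}2)$, and comparing $k=3$ with $k=1$ gives the ratio $2\big/\tfrac{n+1}{2}\cdot\tfrac12=2/(n+1)$. The second identity follows by exchangeability together with $\sum_j\Theta_j^2=1$:
\[
 \E|\Theta_1|\Theta_2^2=\frac{\E|\Theta_1|(1-\Theta_1^2)}{n-1}
 =\frac{m_n\bigl(1-\tfrac{2}{n+1}\bigr)}{n-1}=\frac{m_n}{n+1}.
\]

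\emph{Assembly.} Substituting, $\E|p|q^2=|w|\,m_n(|s|^2+\alpha^2)/(n+1)$, and therefore
\[
 S_n=m_n|w|\Bigl[1+\frac{2|w|^2+3|s|^2+3\alpha^2}{12(n+1)}\Bigr].
\]
Using $2(A-2z)+3(A+2z)=5A+2z$ and $\alpha^2|w|=D^2/\sqrt{A-2z}$ gives exactly \eqref{eq:formula}. For continuity at $x=y$, note that $|D|=|\ip{s}{w}|\le|s||w|$, so $D^2/\sqrt{A-2z}\le|s|^2|w|\to0$ as $w\to0$; the other terms carry a factor $|w|$ and also vanish. The main obstacle is only bookkeeping: the exact constant $2/(n+1)$ in the Gamma-function ratio, and the sign-symmetry step that kills the cross term $\Theta_1\Theta_2$.
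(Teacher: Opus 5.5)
Your proof is correct and follows essentially the paper's route. Your two moments $\E|\Theta_1|^3=2m_n/(n+1)$ and $\E[|\Theta_1|\Theta_2^2]=m_n/(n+1)$ (the latter obtained from $\sum_j\Theta_j^2=1$) are exactly the eigenvalues $\lambda_\parallel,\lambda_\perp$ of the paper's matrix $M_e=\E[|\ip{e}{\Theta}|\Theta\Theta^T]$, and your splitting $u^2+uv+v^2=\frac14\bigl(3q^2+p^2\bigr)$ in the $(x-y,\,x+y)$ frame is a coordinate form of the paper's evaluation of $\Tr M+e^TMe$. One small point: $\E[|\Theta_1|\Theta_1\Theta_2]=0$ is not literally one of the polynomial moments covered by Lemma~\ref{lem:sphere-basic}, but the same reflection $\Theta_2\mapsto-\Theta_2$ gives it.
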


\begin{proof}
Put
\[
 p=\ip{x}{\Theta},\qquad q=\ip{y}{\Theta},\qquad d=x-y,
 \qquad r=|d|=\sqrt{A-2z}.
\]
Since
\[
 \Psi(p)-\Psi(q)
 =(p-q)\left(1+\frac{p^2+pq+q^2}{3}\right)
\]
and $p^2+pq+q^2\ge0$, we have
\begin{equation}\label{eq:K-factor}
 K(p,q)
 =|\ip{d}{\Theta}|
 \left(1+\frac{p^2+pq+q^2}{3}\right).
\end{equation}

Assume first that $d\ne0$ and put $e=d/r$. We compute the matrix
\[
 M_e:=\E\bigl[|\ip{e}{\Theta}|\Theta\Theta^T\bigr].
\]
Choose coordinates so that $e=e_1$. Sign changes of the coordinates show
that all off-diagonal entries are zero. Rotations fixing $e_1$ show that the
last $n-1$ diagonal entries are equal. If their common value is
$\lambda_\perp$ and the first entry is $\lambda_\parallel$, then
$\lambda_\parallel=\E|\Theta_1|^3$.  From the density
\eqref{eq:U1-density}, the substitution $v=u^2$ gives
\begin{equation}\label{eq:absolute-sphere-moment}
 \E|\Theta_1|^r
 =\frac{\Gamma(n/2)\Gamma((r+1)/2)}
 {\sqrt\pi\,\Gamma((n+r)/2)}
 \qquad(r>-1).
\end{equation}
Taking $r=1$ and $r=3$ in this formula gives
\[
 \lambda_\parallel=\frac{2m_n}{n+1}.
\]
Taking the trace gives
\[
 \lambda_\parallel+(n-1)\lambda_\perp
 =\E|\Theta_1|=m_n,
\]
so $\lambda_\perp=m_n/(n+1)$. Therefore
\begin{equation}\label{eq:sphere-matrix}
 M_e=\frac{m_n}{n+1}(I_n+ee^T).
\end{equation}

Let
\[
 M=xx^T+\frac{xy^T+yx^T}{2}+yy^T.
\]
Then $p^2+pq+q^2=\Theta^TM\Theta$. Formula
\eqref{eq:sphere-matrix} gives
\begin{align*}
 \E\bigl[|\ip{d}{\Theta}|(p^2+pq+q^2)\bigr]
 &=r\Tr(MM_e)\\
 &=\frac{rm_n}{n+1}\bigl(\Tr M+e^TMe\bigr).
\end{align*}
We now calculate the two scalar terms. First,
\[
 \Tr M=a+b+z=A+z.
\]
Second,
\[
 e^TMe
 =\frac{(a-z)^2+(a-z)(z-b)+(z-b)^2}{r^2}.
\]
Since
\[
 a-z=\frac{D+r^2}{2},
 \qquad
 z-b=\frac{D-r^2}{2},
\]
the numerator equals $(3D^2+r^4)/4$. Hence
\[
 \Tr M+e^TMe
 =A+z+\frac{r^2}{4}+\frac{3D^2}{4r^2}
 =\frac{5A+2z}{4}+\frac{3D^2}{4(A-2z)}.
\]
Insert this identity into \eqref{eq:K-factor}. Since
$\E|\ip{d}{\Theta}|=rm_n$, we obtain \eqref{eq:formula}.

It remains to discuss $d=0$. In this case $x=y$, so $D=0$ and
$K(\ip{x}{\Theta},\ip{y}{\Theta})=0$. More generally,
\[
 |D|=|\ip{x-y}{x+y}|\le |x-y|\,|x+y|.
\]
Thus $D^2/|x-y|\le |x-y|\,|x+y|^2$, which tends to zero as $y\to x$.
Therefore the formula extends continuously to $x=y$ with value zero.
\end{proof}

\subsection{Independent spherical directions}

Let $U,V$ be independent and uniform on $S^{n-1}$ and independent of
$(X,Y)$. Using the radii fixed in \eqref{eq:upper-notation}, define
\begin{equation}\label{eq:circ}
 X^\circ=R_XU,
 \qquad
 Y^\circ=R_YV,
 \qquad
 Z^\circ=\ip{X^\circ}{Y^\circ}.
\end{equation}
Set
\[
 a=R_X^2,
 \qquad b=R_Y^2,
 \qquad A=a+b,
 \qquad D=a-b.
\]
These definitions agree with the invariants in \eqref{eq:invariants}.

\begin{lemma}\label{lem:cdf-comparison}
One has
\begin{equation}\label{eq:cdf-comparison}
 \cE_n
 =\frac12\left(
 \E S_n(a,b,Z^\circ)-\E S_n(a,b,Z)
 \right).
\end{equation}
\end{lemma}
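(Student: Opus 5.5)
The identity follows from Lemma~\ref{lem:same-independent} by identifying each of its two expectations with an average of $S_n$. The second term is immediate. Conditioning on $(X,Y)$, which is independent of $\Theta$, and applying Proposition~\ref{prop:formula} with $x=X$ and $y=Y$ gives
\[
 \E K(\ip{X}{\Theta},\ip{Y}{\Theta})=\E S_n(R_X^2,R_Y^2,Z)=\E S_n(a,b,Z).
\]
Two technical points need attention here.

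\emph{The diagonal $X=Y$.} This event has probability zero, because $X$ has a density and $Y$ is independent of $X$. In any case the continuous extension in Proposition~\ref{prop:formula} covers it.

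\emph{Integrability.} Every variable involved has finite third moment. Indeed $K(u,v)\le C(1+|u|^3+|v|^3)$, and the moments of $R_X$ are controlled by \eqref{eq:Rmom}. Hence Fubini's theorem may be applied freely.

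For the first term, the plan is to show that the pair $(\ip{X}{\Theta},\ip{Y}{\Theta'})$ has the same joint law as $(\ip{X^\circ}{\Theta},\ip{Y^\circ}{\Theta})$, where $\Theta$ is one uniform direction independent of everything else.

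\emph{Step 1: reduce to the radii.} Condition on $(X,Y)$. By the rotation argument preceding \eqref{eq:averaged-rep}, $\ip{X}{\Theta}\stackrel{d}{=}R_X U_1$ and $\ip{Y}{\Theta'}\stackrel{d}{=}R_Y V_1$. These are conditionally independent because $\Theta$ and $\Theta'$ are independent. So the joint law is that of $(R_XU_1,R_YV_1)$, where $U,V$ are independent uniform vectors, independent of the radii.

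\emph{Step 2: match with the circled vectors.} Now condition on $(R_X,R_Y,U,V)$ instead. The pair $(\ip{R_XU}{\Theta},\ip{R_YV}{\Theta})$ involves a single $\Theta$. I claim that, after integrating out $U$ and $V$, its law is again that of $(R_XU_1',R_YV_1')$ with $U',V'$ independent uniform vectors. To see this, condition on $\Theta$. Since $U$ and $V$ are independent uniform vectors, $\ip{U}{\Theta}$ and $\ip{V}{\Theta}$ are independent, and each has the law of $U_1$, by rotation invariance.

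\emph{Step 3: evaluate.} By Steps 1 and 2,
\[
 \E K(\ip{X}{\Theta},\ip{Y}{\Theta'})=\E K(\ip{X^\circ}{\Theta},\ip{Y^\circ}{\Theta}).
\]
Applying Proposition~\ref{prop:formula} conditionally on $(X^\circ,Y^\circ)$, with $x=X^\circ$ and $y=Y^\circ$, gives $\E S_n(a,b,Z^\circ)$, since $|X^\circ|^2=a$ and $|Y^\circ|^2=b$. The diagonal issue is again trivial: $X^\circ=Y^\circ$ requires $U=\pm V$, which has probability zero when $n\ge2$.

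Substituting both expressions into \eqref{eq:same-independent} gives \eqref{eq:cdf-comparison}.

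The main obstacle is the bookkeeping of the conditioning in Step 2, namely keeping track of which randomness is frozen at each stage. Apart from that, the argument is only checking integrability so that Fubini's theorem applies.
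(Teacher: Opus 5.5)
Your proof is correct and follows the paper's argument: you show that $(\ip{X}{\Theta},\ip{Y}{\Theta'})$ and $(\ip{X^\circ}{\Theta},\ip{Y^\circ}{\Theta})$ have the same conditional law given $(R_X,R_Y)$, by conditioning on the common direction and using the independence and rotation invariance of $U,V$, and then you apply Proposition~\ref{prop:formula} to both terms of \eqref{eq:same-independent}; your remarks on the diagonal and on integrability are harmless additions. One wording fix: in Step~2, the conditioning that makes the argument work is on $(R_X,R_Y,\Theta)$, not on $(R_X,R_Y,U,V)$, because given $U$ and $V$ the two inner products with a common $\Theta$ are generally dependent.
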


\begin{proof}
Fix the radii $R_X$ and $R_Y$. In the first expectation in
\eqref{eq:same-independent}, the variables
$\ip{X}{\Theta}$ and $\ip{Y}{\Theta'}$ are independent and have laws
$R_XU_1$ and $R_YV_1$, where $U_1,V_1$ are independent first coordinates of
uniform sphere points. In the pair
\[
 (\ip{X^\circ}{\Theta},\ip{Y^\circ}{\Theta}),
\]
condition on the common direction $\Theta$. The vectors $U$ and $V$ are
independent and uniform, so the two inner products are again independent
with laws $R_XU_1$ and $R_YV_1$. Thus the two pairs have the same conditional
law given $(R_X,R_Y)$.

The second expectation in \eqref{eq:same-independent} uses the original
vectors $X,Y$ and one common direction. Applying Proposition
\ref{prop:formula} to both expectations gives \eqref{eq:cdf-comparison}.
\end{proof}

Conditionally on $a,b$,
\[
 Z^\circ=\sqrt{ab}\,\ip{U}{V}.
\]
Fix $V=w$. Choose an orthogonal matrix $O$ with $Oe_1=w$. Since $O^TU$ is
uniform on the sphere,
\[
 \ip{U}{w}=\ip{O^TU}{e_1}
\]
has the same law as $U_1$.  Lemma~\ref{lem:sphere-basic} now gives
\begin{equation}\label{eq:circ-moments}
 \E[Z^\circ\mid a,b]=0,
 \qquad
 \E[(Z^\circ)^2\mid a,b]=\frac{ab}{n},
 \qquad
 \E[(Z^\circ)^3\mid a,b]=0,
\end{equation}
and
\begin{equation}\label{eq:circ-fourth}
 \E[(Z^\circ)^4\mid a,b]
 =\frac{3a^2b^2}{n(n+2)}.
\end{equation}

For the original inner product, we use the following cubic moment identity.

\begin{lemma}\label{lem:Z3}
One has
\begin{equation}\label{eq:Z3}
 \E Z^3
 =\norm{\E X^{\otimes3}}_{\HS}^2
 =\sum_{k=1}^n\norm{T_{e_k}}_{\HS}^2
 \le8n.
\end{equation}
\end{lemma}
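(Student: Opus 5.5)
We expand the cube of $Z=\ip{X}{Y}=\sum_i X_iY_i$ and use the independence of $X$ and $Y$. Writing
\[
 Z^3=\sum_{i,j,k=1}^n X_iX_jX_k\,Y_iY_jY_k,
\]
independence and the fact that $Y$ has the same law as $X$ give
\[
 \E Z^3=\sum_{i,j,k}\bigl(\E X_iX_jX_k\bigr)^2=\norm{\E X^{\otimes3}}_{\HS}^2 .
\]
This is the first equality in \eqref{eq:Z3}. Finiteness of all these moments follows from \eqref{eq:Rmom}, so the expansion and the exchange of sum and expectation are justified.

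For the second equality we group the tensor entries by the first index. By definition \eqref{eq:Tu-def}, the $(j,l)$ entry of $T_{e_k}$ is $\E[X_kX_jX_l]$. Hence
\[
 \sum_{k=1}^n\norm{T_{e_k}}_{\HS}^2=\sum_{k,j,l}(\E X_kX_jX_l)^2,
\]
which is again $\norm{\E X^{\otimes3}}_{\HS}^2$.

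For the inequality, Lemma~\ref{lem:Tu} with $u=e_k$ gives $\norm{T_{e_k}}_{\HS}^2\le8$ for each $k$. Summing over the $n$ values of $k$ yields the bound $8n$.

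There is no real obstacle in this argument. The only points needing care are the index bookkeeping in identifying the entries of $T_{e_k}$ with the entries of $\E X^{\otimes3}$, and the integrability remark that justifies expanding $\E Z^3$.
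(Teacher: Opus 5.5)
Your proof is correct and follows the paper's own argument essentially verbatim: expand $Z^3$ and use independence to get $\sum_{i,j,k}(\E X_iX_jX_k)^2$, identify the slices with the matrices $T_{e_k}$, and apply Lemma~\ref{lem:Tu} with $u=e_k$. Your integrability remark via \eqref{eq:Rmom} is a harmless addition that the paper leaves implicit.
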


\begin{proof}
Expanding $Z^3=\ip{X}{Y}^3$ and using independence gives
\[
 \E Z^3
 =\sum_{i,j,k=1}^n
 \bigl(\E X_iX_jX_k\bigr)^2.
\]
For a fixed $k$, the $(i,j)$ entry of $T_{e_k}$ is
$\E X_iX_jX_k$. Hence
\[
 \sum_{i,j,k=1}^n(\E X_iX_jX_k)^2
 =\sum_{k=1}^n\|T_{e_k}\|_{\HS}^2.
\]
Lemma~\ref{lem:Tu} gives $\|T_{e_k}\|_{\HS}^2\le8$ for every $k$.
Summing over $k$ gives the last inequality in \eqref{eq:Z3}.
\end{proof}

\section{Proof of the directional fluctuation bound}\label{sec:directional-proof}

\subsection{Good and bad events}

We use the following terminology throughout this section.  For
$\zeta\in\{Z,Z^\circ\}$ and for constants to be chosen below, set
\begin{equation}\label{eq:good-event-general}
 \mathcal G_\zeta
 :=\{\alpha_0 n\le a,b\le\beta_0 n,\ A-2\zeta\ge\alpha_1 n\},
 \qquad
 \mathcal B_\zeta:=\mathcal G_\zeta^c.
\end{equation}
We call $\mathcal G_\zeta$ the good event and $\mathcal B_\zeta$ the bad
event.  On the good event both radii are comparable with $\sqrt n$ and
$|X-Y|^2=A-2Z$, or its spherical analogue $A-2Z^\circ$, is bounded below
by a fixed multiple of $n$.  These are exactly the conditions needed to
bound the denominators and the fourth derivative of the kernel.  Every use
of the words ``good event'' or ``bad event'' below refers to
\eqref{eq:good-event-general}.

\begin{lemma}\label{lem:truncation}
There are universal constants
$0<\alpha_0<1<\beta_0$, $0<\alpha_1<1$, and $c,C>0$ such that
\begin{equation}\label{eq:bad-prob}
 \Pp(\mathcal B_Z)+\Pp(\mathcal B_{Z^\circ})
 \le Ce^{-c\sqrt n}.
\end{equation}
Moreover, for every fixed integer $k\ge0$,
\begin{align}
 &\E\bigl[(1+a+b)^k\mathbf1_{\mathcal B_Z}\bigr]
 +\E\bigl[(1+a+b)^k\mathbf1_{\mathcal B_{Z^\circ}}\bigr]
 \notag\\
 &\hspace{5cm}\le C_k n^k e^{-c_k\sqrt n}.
 \label{eq:bad-weighted}
\end{align}
\end{lemma}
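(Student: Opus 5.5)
We choose the constants so that each way of failing the good event is a large deviation covered by Theorem~\ref{thm:GM}. Take $\alpha_0=1/4$, $\beta_0=9/4$, and $\alpha_1=1/2$. The event $\{a<n/4\}\cup\{a>9n/4\}$ is contained in $\{|R_X-\sqrt n|\ge\tfrac12\sqrt n\}$. Theorem~\ref{thm:GM}, applied to $X$ with $t=1/2$, bounds its probability by $Ce^{-c\sqrt n}$, and the same bound holds for $b$. For the distance condition, note that $A-2Z=|X-Y|^2=2|\widetilde X|^2$. The condition $A-2Z<n/2$ therefore means $|\widetilde X|<\tfrac12\sqrt n$, so $\bigl||\widetilde X|-\sqrt n\bigr|\ge\tfrac12\sqrt n$. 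Since $\widetilde X$ is centered, isotropic, and log-concave, as recorded in the previous section, Theorem~\ref{thm:GM} applies to it and gives the bound $Ce^{-c\sqrt n}$.

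For the spherical analogue we have $A-2Z^\circ=|R_XU-R_YV|^2$. Conditionally on the radii, the inner product $\ip{U}{V}$ has the law of $U_1$. On the event where the radii lie in $[\tfrac12\sqrt n,\tfrac32\sqrt n]$, we get
\[
 A-2Z^\circ\ge (R_X-R_Y)^2+2R_XR_Y(1-\ip{U}{V})\ge \tfrac12 n\,(1-\ip{U}{V}).
\]
This is at least $n/4$ unless $\ip{U}{V}\ge 1/2$. The density \eqref{eq:U1-density} gives $\Pp\{U_1\ge1/2\}\le C\sqrt n\,(3/4)^{(n-3)/2}\le Ce^{-cn}$. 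If needed, we shrink $\alpha_1$ to $1/4$ so that a single choice of constants works for both $\zeta$. Collecting the three estimates proves \eqref{eq:bad-prob}.

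For \eqref{eq:bad-weighted}, apply Cauchy--Schwarz:
\[
 \E[(1+a+b)^k\mathbf1_{\mathcal B}]\le \|(1+a+b)^k\|_2\,\Pp(\mathcal B)^{1/2}.
\]
By \eqref{eq:Rmom}, or equivalently \eqref{eq:moment-ledger} with $a=n+\Delta_X$, we have $\|1+a+b\|_{2k}\le C_kn$. Hence $\|(1+a+b)^k\|_2\le C_kn^k$. Combining this with \eqref{eq:bad-prob} gives the bound $C_kn^ke^{-c\sqrt n/2}$.

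The main point needing care is the $Z^\circ$ term. There we must check that a lower bound on the radii together with a non-aligned direction forces $|X^\circ-Y^\circ|^2\gtrsim n$, and that the constants can be chosen uniformly. Beyond that, the argument consists of direct applications of Theorem~\ref{thm:GM} to $X$, $Y$, and $\widetilde X$.
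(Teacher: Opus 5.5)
Your proof is correct and follows the paper's argument essentially step for step. You apply Theorem~\ref{thm:GM} to $X$, $Y$ and $\widetilde X$ for the radii and for $|X-Y|^2$, use a spherical-cap bound for $\ip{U}{V}$ after conditioning on the radii (your identity $A-2Z^\circ=(R_X-R_Y)^2+2R_XR_Y(1-\ip{U}{V})$ is the paper's AM--GM step in another form), and use H\"older/Cauchy--Schwarz with \eqref{eq:Rmom} for the weighted bound. The one ingredient you assert without proof, $c_n\le C\sqrt n$, follows from \eqref{eq:kappa} because $c_n=\sqrt n\,\kappa_n$, or from the paper's elementary integration over $|u|\le n^{-1/2}$.
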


\begin{proof}
Choose fixed constants $0<\alpha_0<1<\beta_0$.  Theorem~\ref{thm:GM}
with fixed values of $t$ gives
\[
 \Pp\{a\notin[\alpha_0n,\beta_0n]\}
 +\Pp\{b\notin[\alpha_0n,\beta_0n]\}
 \le Ce^{-c\sqrt n}.
\]
By the preceding subsection, the vector $\widetilde X$ fixed in
\eqref{eq:upper-notation} is centered, isotropic, and log-concave.  Choose
$0<\alpha_1<1$ so small that $\sqrt{\alpha_1/2}<1/2$.  If
$|X-Y|^2<\alpha_1n$, then
\[
 \bigl||\widetilde X|-\sqrt n\bigr|>\frac12\sqrt n.
\]
Apply Theorem~\ref{thm:GM} to $\widetilde X$ with $t=1/2$.  The preceding inclusion
then gives
\[
 \Pp\{|X-Y|^2<\alpha_1 n\}
 \le \Pp\left\{\bigl||\widetilde X|-\sqrt n\bigr|>\frac12\sqrt n\right\}
 \le Ce^{-c\sqrt n}.
\]

For the spherical pair, condition on radii satisfying
$a,b\ge\alpha_0 n$. If $A-2Z^\circ<\alpha_1n$, then
\[
 \ip{U}{V}
 >\frac{A-\alpha_1n}{2\sqrt{ab}}
 \ge1-\frac{\alpha_1}{2\alpha_0}.
\]
Set $\delta=\alpha_1/(2\alpha_0)$ and choose $\alpha_1$ so that
$0<\delta<1/2$. Fix $V=w$ and choose an orthogonal matrix $O$ with
$Oe_1=w$. Since $O^TU$ is uniform on $S^{n-1}$,
\[
 \ip{U}{w}=\ip{O^TU}{e_1}
\]
has the same law as $U_1$. Formula \eqref{eq:U1-density} gives
\begin{align*}
 \Pp\{U_1>1-\delta\}
 &=c_n\int_{1-\delta}^1(1-u^2)^{(n-3)/2}\,\dd u\\
 &\le c_n\delta(2\delta)^{(n-3)/2}
 \le Ce^{-cn},
\end{align*}
To bound $c_n$ without an asymptotic formula, integrate
\eqref{eq:U1-density} over $|u|\le n^{-1/2}$.  On this interval,
$(1-u^2)^{(n-3)/2}\ge(1-1/n)^{(n-3)/2}\ge c$, and therefore
\[
 1\ge c_n\int_{-n^{-1/2}}^{n^{-1/2}}
 (1-u^2)^{(n-3)/2}\,\dd u
 \ge c\,c_n n^{-1/2}.
\]
Thus $c_n\le C\sqrt n$, and the cap probability is at most $Ce^{-cn}$.
This proves \eqref{eq:bad-prob}.

To prove \eqref{eq:bad-weighted}, choose $p>1$ and let $p'$ be its conjugate
exponent. H\"older's inequality gives, for example,
\[
 \E[(1+a+b)^k\mathbf1_{\mathcal B_Z}]
 \le \|(1+a+b)^k\|_{p}\,
 \Pp(\mathcal B_Z)^{1/p'}.
\]
The moment bound \eqref{eq:Rmom} makes the first factor at most $C_kn^k$,
and \eqref{eq:bad-prob} makes the second exponentially small. The proof for $\mathcal B_{Z^\circ}$ is identical.
\end{proof}

The function $S_n$ satisfies the global bound
\begin{equation}\label{eq:formula-growth}
 0\le S_n(a,b,z)
 \le C m_n\sqrt A\left(1+\frac An\right).
\end{equation}
Indeed, Cauchy--Schwarz gives $|z|\le\sqrt{ab}\le A/2$. Also
$D=\ip{x-y}{x+y}$, so
\[
 \frac{D^2}{\sqrt{A-2z}}
 \le \sqrt{A-2z}\,|x+y|^2
 \le 2A\sqrt{A-2z}.
\]
Combining this bound with Lemma~\ref{lem:truncation} shows that the
expectation of $S_n(a,b,\zeta)$ over $\mathcal B_\zeta$ is smaller than
$n^{-N}$ for every fixed $N$, for either $\zeta=Z$ or
$\zeta=Z^\circ$.  We may therefore perform the Taylor expansion on
$\mathcal G_\zeta$, where every denominator is uniformly separated from
zero, and then restore the bad event with an exponentially small error.

Choose a fixed function $\chi\in C_c^\infty((0,\infty))$ such that
$\chi=1$ on $[\alpha_0,\beta_0]$ and
$\operatorname{supp}\chi\subset[\alpha_0/2,2\beta_0]$. Put
\begin{equation}\label{eq:cutoff}
 \chi_n(a,b)=\chi(a/n)\chi(b/n).
\end{equation}
The function $\chi_n$ is used only to make the coefficients smooth on all
of $\R^2$.  By Lemma~\ref{lem:truncation}, $\chi_n(a,b)=1$ except on an
event of probability at most $Ce^{-c\sqrt n}$.

Write the Taylor expansion of $S_n$ in the inner-product variable at
$z=0$ as
\begin{equation}\label{eq:z-expansion}
 S_n(a,b,z)
 =s_0(a,b)+\ell(a,b)z+q(a,b)z^2+c_3(a,b)z^3+\mathcal R_4(a,b,z).
\end{equation}
Differentiating \eqref{eq:formula} at $z=0$ gives
\begin{align}
 \ell(a,b)
 &=\frac{m_n}{\sqrt A}
 \left[-1-\frac{A}{4(n+1)}
 +\frac{D^2}{4(n+1)A}\right],\label{eq:l-formula}\\
 q(a,b)
 &=\frac{m_n}{2A^{3/2}}
 \left[-1-\frac{3A}{4(n+1)}
 +\frac{3D^2}{4(n+1)A}\right],\label{eq:q-formula}\\
 c_3(a,b)
 &=\frac{m_n}{6A^{5/2}}
 \left[-3-\frac{7A}{4(n+1)}
 +\frac{15D^2}{4(n+1)A}\right].\label{eq:c-formula}
\end{align}
The value of $s_0(a,b)=S_n(a,b,0)$ will cancel later and is not needed.  The differentiation is written out in Appendix~A.2.

Define the cut-off coefficients
\begin{equation}\label{eq:cut-coeff}
 L=\chi_n\ell,
 \qquad
 Q=\chi_nq,
 \qquad
 C_3=\chi_nc_3,
 \qquad
 S_0=\chi_ns_0,
\end{equation}
extended smoothly by zero to all of $\R^2$.

\begin{lemma}\label{lem:coeff-bounds}
For $i+j\le3$,
\begin{align}
 |\partial_a^i\partial_b^jL(a,b)|
 &\le C_{i,j}n^{-1-i-j},\label{eq:L-deriv}\\
 |\partial_a^i\partial_b^jQ(a,b)|
 &\le C_{i,j}n^{-2-i-j}.\label{eq:Q-deriv}
\end{align}
Moreover,
\begin{equation}\label{eq:C-deriv}
 |C_3(a,b)|\le Cn^{-3},
 \qquad
 |\nabla C_3(a,b)|\le Cn^{-4}.
\end{equation}
On the support of $\chi_n$, whenever $A-2z\ge\alpha_1 n$,
\begin{equation}\label{eq:R4}
 |\mathcal R_4(a,b,z)|\le Cn^{-4}|z|^4.
\end{equation}
\end{lemma}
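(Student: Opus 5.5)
Since the coefficients \eqref{eq:l-formula}--\eqref{eq:c-formula} are explicit, the plan is to obtain the derivative bounds by homogeneity and scaling on the support of $\chi_n$. There, $a,b\in[\alpha_0n/2,2\beta_0n]$, so $A\asymp n$ and $|D|\le A$. Write each coefficient as $m_nA^{-k-1/2}g_k(A/(n+1),D^2/((n+1)A))$ with $g_k$ a polynomial of degree one. Here $k=0,1,2$ for $\ell,q,c_3$, and by \eqref{eq:mn-bounds} we have $m_n\asymp n^{-1/2}$. Put $a=n\hat a$ and $b=n\hat b$. Then $\ell=m_nn^{-1/2}\hat\ell(\hat a,\hat b)$, where $\hat\ell$ is a smooth function of $(\hat a,\hat b)$ on the compact set $[\alpha_0/2,2\beta_0]^2$. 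It is smooth because $\hat A\ge\alpha_0$ stays away from zero, and $n/(n+1)$ enters only as a bounded parameter, so the bounds are uniform in $n$. The same applies after multiplying by $\chi(\hat a)\chi(\hat b)$. Each derivative $\partial_a$ costs a factor $n^{-1}$ by the chain rule, which gives $|\partial^i_a\partial^j_bL|\le C_{i,j}n^{-1/2}n^{-1/2}n^{-i-j}=C_{i,j}n^{-1-i-j}$. The identical argument with prefactors $m_nA^{-3/2}\asymp n^{-2}$ and $m_nA^{-5/2}\asymp n^{-3}$ yields \eqref{eq:Q-deriv} and \eqref{eq:C-deriv}. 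Outside $\operatorname{supp}\chi_n$ all the cut-off coefficients vanish identically.

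For the remainder \eqref{eq:R4}, I will use Taylor's formula with integral remainder in $z$:
\[
 \mathcal R_4(a,b,z)=\frac{z^4}{6}\int_0^1(1-s)^3\,\partial_z^4S_n(a,b,sz)\,\dd s.
\]
This is valid because $A-2sz\ge\min(A,A-2z)$. On the support of $\chi_n$ we have $A\ge\alpha_0 n$, and by hypothesis $A-2z\ge\alpha_1n$, so $A-2sz\ge\min(\alpha_0,\alpha_1)n$ along the whole segment. Hence $S_n$ is smooth there, since all its singularities come from powers of $r=A-2z$. It therefore suffices to show $|\partial_z^4S_n(a,b,w)|\le Cn^{-4}$ whenever $r(w)=A-2w\ge cn$ and $A\le Cn$.

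Differentiating \eqref{eq:formula} term by term, $\partial_z^4$ of $r^{1/2}$ gives $Cr^{-7/2}$. For $(5A+2z)r^{1/2}$, writing $5A+2z=6A-r$ gives terms $Ar^{-7/2}$ and $r^{-5/2}$. Finally $D^2r^{-1/2}$ gives $D^2r^{-9/2}$. Multiplying by $m_n\asymp n^{-1/2}$, and by $1/(n+1)$ where applicable, the three contributions are
\[
 n^{-1/2}n^{-7/2},\qquad n^{-3/2}\cdot n\cdot n^{-7/2},\qquad n^{-3/2}n^2n^{-9/2},
\]
each of which is $O(n^{-4})$. Here I use $|D|\le A\le Cn$ and $r\ge cn$, noting that $|z|\le\sqrt{ab}\le A/2$ keeps $r\le 2A\le Cn$ bounded above.

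This last step is also the main obstacle. The only delicate point is that the hypothesis $A-2z\ge\alpha_1n$ must be propagated to all intermediate points $sz$ of the segment. That is handled by the convexity remark above, namely that $A-2sz$ is affine in $s$ and at least $\min(A,\,A-2z)$. The rest is bookkeeping with the explicit powers.
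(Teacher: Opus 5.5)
Your proposal is correct and follows essentially the same route as the paper. You rescale $a=n\hat a$, $b=n\hat b$ to get $n$-uniformly smooth functions on the fixed compact set $[\alpha_0/2,2\beta_0]^2$ and apply the chain rule, then bound $\partial_z^4S_n$ by the same four term types using $A-2w\ge\min\{A,A-2z\}\ge cn$; the paper uses the Lagrange rather than the integral form of the remainder, which is immaterial. Two small points: the contribution $\frac{m_n}{n+1}(A-2w)^{-5/2}\lesssim n^{-4}$ that you did not list explicitly is fine, and your upper bound $r\le Cn$ is unnecessary, since only negative powers of $A-2w$ occur.
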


\begin{proof}
Put $\alpha=a/n$ and $\beta=b/n$.  On the support of $\chi_n$ the pair
$(\alpha,\beta)$ lies in the fixed compact set
$[\alpha_0/2,2\beta_0]^2$.  Substituting
\[
 A=n(\alpha+\beta),\qquad D=n(\alpha-\beta)
\]
into \eqref{eq:l-formula}--\eqref{eq:c-formula}, and using
$cn^{-1/2}\le m_n\le Cn^{-1/2}$, we can write
\begin{align*}
 L(a,b)&=n^{-1}\mathscr L_n(\alpha,\beta),\\
 Q(a,b)&=n^{-2}\mathscr Q_n(\alpha,\beta),\\
 C_3(a,b)&=n^{-3}\mathscr C_n(\alpha,\beta).
\end{align*}
The three functions on the right are smooth, supported in a fixed compact
set, and all of their derivatives of order at most three are bounded by a
constant independent of $n$.  For example, the factors
$(\alpha+\beta)^{-1/2}$ and $(\alpha+\beta)^{-3/2}$ are bounded with all
needed derivatives because $\alpha+\beta\ge\alpha_0$.  Since
$\partial_a=n^{-1}\partial_\alpha$ and
$\partial_b=n^{-1}\partial_\beta$, the chain rule gives
\eqref{eq:L-deriv}--\eqref{eq:C-deriv}.

For \eqref{eq:R4}, differentiate \eqref{eq:formula} four times in $z$.
Every resulting term is a universal multiple of one of
\begin{align*}
 &m_n(A-2z)^{-7/2},
 &&\frac{m_nA}{n}(A-2z)^{-7/2},\\
 &\frac{m_n}{n}(A-2z)^{-5/2},
 &&\frac{m_nD^2}{n}(A-2z)^{-9/2}.
\end{align*}
If $w$ lies between $0$ and $z$, then
\[
 A-2w\ge \min\{A,A-2z\}\ge c n,
\]
because $A\ge\alpha_0n$ on the support of the cutoff and
$A-2z\ge\alpha_1n$.  Since $|D|\le A\le Cn$, each of the four displayed
terms is at most $Cn^{-4}$ for every such $w$.  Taylor's theorem gives, for
some $w$ between $0$ and $z$,
\[
 \mathcal R_4(a,b,z)=\frac{\partial_z^4S_n(a,b,w)}{4!}z^4.
\]
The derivative bound now gives \eqref{eq:R4}.
\end{proof}

\begin{proposition}\label{prop:truncated-expansion}
For $\zeta=Z$ and for $\zeta=Z^\circ$,
\begin{equation}\label{eq:truncated-expansion}
 \E S_n(a,b,\zeta)
 =\E\bigl[S_0(a,b)+L(a,b)\zeta+Q(a,b)\zeta^2
 +C_3(a,b)\zeta^3\bigr]
 +O(n^{-2}).
\end{equation}
\end{proposition}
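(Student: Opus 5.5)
We prove \eqref{eq:truncated-expansion} by splitting the expectation of $S_n(a,b,\zeta)$ according to the good and bad events of \eqref{eq:good-event-general}. On $\mathcal G_\zeta$ the pair $(a,b)$ lies in $[\alpha_0n,\beta_0n]^2$, where $\chi_n=1$. There the cut-off coefficients $S_0,L,Q,C_3$ coincide with $s_0,\ell,q,c_3$, and we have $A-2\zeta\ge\alpha_1n$. The Taylor expansion \eqref{eq:z-expansion} therefore gives the pointwise identity
\[
 S_n(a,b,\zeta)\mathbf1_{\mathcal G_\zeta}
 =\bigl[S_0+L\zeta+Q\zeta^2+C_3\zeta^3+\mathcal R_4(a,b,\zeta)\bigr]\mathbf1_{\mathcal G_\zeta}.
\]
By \eqref{eq:R4}, the remainder contributes at most $Cn^{-4}\E\zeta^4$. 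For $\zeta=Z$, the moment ledger \eqref{eq:moment-ledger} gives $\E Z^4\le Cn^2$. For $\zeta=Z^\circ$, formula \eqref{eq:circ-fourth} gives $\E(Z^\circ)^4=3\E[a^2b^2]/(n(n+2))\le Cn^2$, using \eqref{eq:Rmom}. In both cases the remainder term is $O(n^{-2})$.

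Next we restore the bad event on both sides. The discarded piece $\E[S_n(a,b,\zeta)\mathbf1_{\mathcal B_\zeta}]$ is bounded using the growth estimate \eqref{eq:formula-growth}, namely $S_n\le Cm_n\sqrt A(1+A/n)\le C(1+a+b)^2$. Together with \eqref{eq:bad-weighted}, this shows the piece is at most $Cn^2e^{-c\sqrt n}=O(n^{-2})$. We must also add back
\[
 \E\bigl[(S_0+L\zeta+Q\zeta^2+C_3\zeta^3)\mathbf1_{\mathcal B_\zeta}\bigr].
\]
On the support of $\chi_n$ the coefficients satisfy $|L|\le Cn^{-1}$, $|Q|\le Cn^{-2}$ and $|C_3|\le Cn^{-3}$ by Lemma~\ref{lem:coeff-bounds}. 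The bound $|S_0|=|\chi_ns_0|\le C(1+a+b)^2$ follows from \eqref{eq:formula-growth} at $z=0$. Since $|\zeta|\le\sqrt{ab}\le a+b$, the whole polynomial is dominated by $C(1+a+b)^3$. Another application of \eqref{eq:bad-weighted} makes this term $O(n^3e^{-c\sqrt n})=O(n^{-2})$.

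Combining the three pieces yields \eqref{eq:truncated-expansion} for both choices of $\zeta$. All moments used are finite, by \eqref{eq:Rmom} together with H\"older's inequality, so every expectation is well defined.

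The only step requiring some care is the remainder. \eqref{eq:R4} is stated only on the support of $\chi_n$ with $A-2z\ge\alpha_1n$. This is why we work with $\mathbf1_{\mathcal G_\zeta}$ rather than with $\chi_n$ alone: on $\mathcal G_\zeta$ both hypotheses hold, so \eqref{eq:R4} applies directly. The point is that the bound $n^{-4}\cdot n^2$ needs the fourth moment of $\zeta$ at the scale $n^2$, and that scale is exactly what \eqref{eq:moment-ledger} and \eqref{eq:circ-fourth} provide.
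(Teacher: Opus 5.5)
Your proof is correct and follows essentially the same route as the paper: you split on $\mathcal G_\zeta$ and $\mathcal B_\zeta$, bound the Taylor remainder by $Cn^{-4}\E\zeta^4$ with $\E Z^4,\ \E(Z^\circ)^4\le Cn^2$, and show both bad-event contributions are exponentially small. The only differences are cosmetic. You dominate everything by polynomial weights $(1+a+b)^k$ and invoke \eqref{eq:bad-weighted}. The paper instead applies H\"older to the growth majorant $H_n$ and observes that the truncated polynomial is bounded by a constant on the support of $\chi_n$.
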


\begin{proof}
Fix $\zeta\in\{Z,Z^\circ\}$.  On the explicitly defined event
$\mathcal G_\zeta$, the radii lie in the interval on which
$\chi_n=1$, and $A-2\zeta\ge\alpha_1n$; hence
\eqref{eq:z-expansion} and the remainder estimate \eqref{eq:R4} apply.
On the bad event $\mathcal B_\zeta$, the growth bound
\eqref{eq:formula-growth} gives
\[
 S_n(a,b,\zeta)
 \le Cn^{-1/2}\sqrt{a+b}\left(1+\frac{a+b}{n}\right).
\]
Let
\[
 H_n(a,b):=Cn^{-1/2}\sqrt{a+b}\left(1+\frac{a+b}{n}\right).
\]
For every fixed $p>1$, \eqref{eq:Rmom} gives $\|H_n(a,b)\|_p\le C_p$.
If $p'$ is conjugate to $p$, H\"older's inequality and
\eqref{eq:bad-prob} therefore give
\[
 \E[H_n(a,b)\mathbf1_{\mathcal B_\zeta}]
 \le C_p\Pp(\mathcal B_\zeta)^{1/p'}
 \le C_pe^{-c_p\sqrt n}.
\]
Since an exponential in $\sqrt n$ is bounded by $C_Nn^{-N}$ for every
fixed $N$, the expectation over the bad event is $O(n^{-N})$.  On the support
of $\chi_n$, $|\zeta|\le\sqrt{ab}\le Cn$, and the
coefficient bounds give
\[
 |S_0|+|L\zeta|+|Q\zeta^2|+|C_3\zeta^3|\le C.
\]
Outside the support these four terms vanish.  Their contribution on the bad
event is therefore also exponentially small.

For the Taylor remainder, \eqref{eq:R4} and \eqref{eq:moment-ledger} give
\[
 \E|\mathcal R_4(a,b,Z)|
 \le Cn^{-4}\E Z^4
 \le Cn^{-2}.
\]
For $Z^\circ$, condition first on $a,b$ and use
\eqref{eq:circ-fourth}.  Independence of $a$ and $b$ gives
\[
 \E(Z^\circ)^4
 =\frac{3}{n(n+2)}(\E a^2)^2.
\]
The bound \eqref{eq:Rmom} with $p=4$ gives $\E a^2=\E R_X^4\le Cn^2$.
Hence $\E(Z^\circ)^4\le Cn^2$, and the same $Cn^{-2}$ remainder bound
follows.
\end{proof}

\subsection{The first three Taylor terms}

The radial pair $(a,b)$ has exactly the same distribution in the original
and the two formulas. Hence the $S_0$ terms in
Proposition~\ref{prop:truncated-expansion} cancel. It remains to treat the
next three orders.

Set
\begin{equation}\label{eq:deltas}
 \delta_a=a-n=\Delta_X,
 \qquad
 \delta_b=b-n=\Delta_Y.
\end{equation}

\subsubsection{The linear term}

\begin{proposition}\label{prop:linear}
One has
\begin{equation}\label{eq:linear-bound}
 \left|
 \E[L(a,b)Z^\circ]-\E[L(a,b)Z]
 \right|
 \le\frac C{n^2}.
\end{equation}
\end{proposition}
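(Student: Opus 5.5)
First, $\E[L(a,b)Z^\circ]=0$. Conditionally on $(a,b)$ we have $\E[Z^\circ\mid a,b]=0$ by \eqref{eq:circ-moments}, and $L(a,b)$ is a function of $(a,b)$. So it remains to show $|\E[L(a,b)Z]|\le C/n^2$.

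Since $Y$ is independent of $X$ and centered, we condition on $X$:
\[
 \E[L(a,b)Z]=\E\bigl[\ip{X}{\E_Y[L(a,b)Y]}\bigr].
\]
Taylor expand $L$ in $b$ around $n$ to second order, with $a$ held fixed:
\[
 L(a,b)=L(a,n)+\partial_bL(a,n)\,\delta_b+\tfrac12\partial_b^2L(a,\xi)\,\delta_b^2 .
\]
The zeroth-order term gives $\E_Y[L(a,n)Y]=0$, because $Y$ is centered. The first-order term gives $\partial_bL(a,n)\,\E[\Delta_YY]=\partial_bL(a,n)\,v$. This contributes $\E[\partial_bL(a,n)\ip{X}{v}]$, and we expand once more, now in $a$:
\[
 \partial_bL(a,n)=\partial_bL(n,n)+O(n^{-3})\,\delta_a .
\]
The constant part again vanishes since $\E X=0$. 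The remainder is bounded by
\[
 Cn^{-3}\,\E|\Delta_X\ip{X}{v}|\le Cn^{-3}\,\|\Delta_X\|_2\,|v|\le Cn^{-3}\sqrt n\sqrt n=Cn^{-2},
\]
using \eqref{eq:L-deriv}, \eqref{eq:delta-L2}, and \eqref{eq:v-bound}. The second-order remainder in $b$ contributes at most
\[
 Cn^{-3}\,\E\bigl[\delta_b^2|Z|\bigr]\le Cn^{-3}\|\Delta_Y\|_4^2\|Z\|_2\le Cn^{-3}\cdot n\cdot\sqrt n=Cn^{-3/2},
\]
by \eqref{eq:L-deriv} and \eqref{eq:moment-ledger}. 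This is not good enough, so the naive order is insufficient and we need a symmetric expansion.

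The better approach is to expand jointly in $(\delta_a,\delta_b)$ around $(n,n)$ up to third order, using \eqref{eq:L-deriv} with $i+j\le3$:
\[
 L(a,b)=\sum_{i+j\le2}\frac{\partial_a^i\partial_b^jL(n,n)}{i!\,j!}\,\delta_a^i\delta_b^j+\mathcal R_3 ,
 \qquad |\mathcal R_3|\le Cn^{-4}\bigl(|\delta_a|+|\delta_b|\bigr)^3 .
\]
Multiply each term by $Z=\ip{X}{Y}$ and use independence, so that every term factorizes as $\ip{\E[\delta_a^iX]}{\E[\delta_b^jY]}$.

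1. Whenever $i=0$ or $j=0$, the corresponding factor is $\E X=0$ or $\E Y=0$, so the term vanishes.
2. The only surviving term with $i+j\le2$ is $(i,j)=(1,1)$. It equals $\partial_a\partial_bL(n,n)\,|v|^2$, which is bounded by $Cn^{-3}\cdot 8n=Cn^{-2}$ using \eqref{eq:v-bound}.
3. The remainder is bounded by $Cn^{-4}\,\E[(|\Delta_X|+|\Delta_Y|)^3|Z|]$. Hölder's inequality with \eqref{eq:moment-ledger} gives $Cn^{-4}\,n^{3/2}\,n^{1/2}=Cn^{-2}$.

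The Taylor expansion is legitimate on all of $\R^2$ because $L$ is smooth, being cut off by $\chi_n$. The derivative bounds \eqref{eq:L-deriv} are global, and the integrability of all terms follows from \eqref{eq:moment-ledger}. This completes the plan.

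The main obstacle is realizing that a one-variable expansion loses a factor $\sqrt n$. The fix is to expand to total order three, so that the only surviving structured term is $|v|^2$, which Lemma~\ref{lem:Tu} controls. That lemma is exactly where the quadratic variance inequality enters.
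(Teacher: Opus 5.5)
Your final argument is correct and is essentially the paper's proof. You show $\E[L(a,b)Z^\circ]=0$ by conditioning, then take a joint second-order Taylor expansion of $L$ at $(n,n)$ with cubic remainder $Cn^{-4}(|\delta_a|+|\delta_b|)^3$. All terms with $i=0$ or $j=0$ vanish by centering, the cross term $\partial_a\partial_bL(n,n)\,|v|^2$ is controlled by Lemma~\ref{lem:Tu}, and H\"older handles the remainder. The one-variable expansion you try first is unnecessary but harmless.
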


\begin{proof}
By \eqref{eq:circ-moments},
\begin{equation}\label{eq:linear-circ-zero}
 \E[L(a,b)Z^\circ]=0.
\end{equation}
Let $P_L$ be the second-order Taylor polynomial of $L$ at $(n,n)$.
By \eqref{eq:L-deriv},
\begin{equation}\label{eq:L-rem}
 |L(a,b)-P_L(a,b)|
 \le Cn^{-4}(|\delta_a|+|\delta_b|)^3.
\end{equation}
Using H\"older's inequality and \eqref{eq:moment-ledger},
\begin{align}
 \E\bigl[|L-P_L|\,|Z|\bigr]
 &\le Cn^{-4}
 \norm{|\delta_a|+|\delta_b|}_6^3\norm Z_2
 \le\frac C{n^2}.
 \label{eq:L-rem-expect}
\end{align}

Write
\[
 P_L(a,b)=\ell_0+\ell_a\delta_a+\ell_b\delta_b
 +\frac12\ell_{aa}\delta_a^2
 +\ell_{ab}\delta_a\delta_b
 +\frac12\ell_{bb}\delta_b^2.
\]
For every integrable function $f$ of $a$, independence and $\E Y=0$ give
\begin{equation}\label{eq:pure-linear-zero}
 \E[f(a)Z]
 =\E\!\left[f(a)\ip{X}{Y}\right]
 =\E[f(a)X]\mathbin{\cdot}\E Y=0.
\end{equation}
The same argument, with the roles of $X$ and $Y$ interchanged, applies to
functions of $b$.  Hence the constant term, both linear terms, and the two
pure quadratic terms in $P_L$ vanish after multiplication by $Z$.  The only
term left is $\ell_{ab}\delta_a\delta_b Z$.  By
\eqref{eq:L-deriv}, $|\ell_{ab}|\le Cn^{-3}$, while independence gives
\begin{equation}\label{eq:linear-cross}
 \E[\delta_a\delta_bZ]
 =\sum_{i=1}^n\bigl(\E[\Delta_X X_i]\bigr)^2
 =|v|^2.
\end{equation}
Lemma~\ref{lem:Tu} gives $|v|^2\le8n$. Together with
\eqref{eq:L-rem-expect}, this proves \eqref{eq:linear-bound}.
\end{proof}

\subsubsection{The quadratic term}

\begin{proposition}\label{prop:quadratic}
One has
\begin{equation}\label{eq:quadratic-bound}
 \left|
 \E[Q(a,b)(Z^\circ)^2]-\E[Q(a,b)Z^2]
 \right|
 \le\frac C{n^2}.
\end{equation}
\end{proposition}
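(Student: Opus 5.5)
Mirroring the linear term, we Taylor expand $Q$ around $(n,n)$, but only to first order, since $Q$ is already of size $n^{-2}$ and $\E Z^2=n$. Let $P_Q=q_0+q_a\delta_a+q_b\delta_b$ be the first-order Taylor polynomial of $Q$ at $(n,n)$. By \eqref{eq:Q-deriv}, $|q_0|\le Cn^{-2}$, $|q_a|,|q_b|\le Cn^{-3}$, and
\[
 |Q(a,b)-P_Q(a,b)|\le Cn^{-4}(|\delta_a|+|\delta_b|)^2 .
\]
Using H\"older's inequality and \eqref{eq:moment-ledger}, with $\|\delta\|_4^2\le Cn$ and $\|Z\|_4^2\le Cn$, the remainder contributes at most $Cn^{-4}\cdot n\cdot n=Cn^{-2}$ for $\zeta=Z$. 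For $Z^\circ$ we first condition on $(a,b)$, use $\E[(Z^\circ)^2\mid a,b]=ab/n$ from \eqref{eq:circ-moments}, and then apply \eqref{eq:Rmom}; this gives the same $Cn^{-2}$ bound. It therefore suffices to compare $\E[P_Q(a,b)\zeta^2]$ for the two choices of $\zeta$.

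\emph{Constant term.} Isotropy and independence give $\E Z^2=\E\Tr(XX^TYY^T)=n$. On the other side, $\E(Z^\circ)^2=\E[ab]/n=n^2/n=n$, since $\E a=\E b=n$ and $a,b$ are independent. The constant terms therefore cancel exactly.

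\emph{Linear terms.} For the spherical pair, $\E[\delta_a(Z^\circ)^2]=\E[\delta_a ab]/n=\E[\delta_a a]\,\E b/n=\E[\delta_a a]=\E\delta_a^2$, using $\E\delta_a=0$. For the original pair, independence gives
\[
 \E[\delta_aZ^2]=\E\bigl[\delta_a\, X^T\E[YY^T]X\bigr]=\E[\delta_a|X|^2]=\E\delta_a^2 ,
\]
and note that this computation uses only the isotropy of $Y$. Both sides are equal, so the linear terms in $\delta_a$ cancel exactly, and by symmetry so do the terms in $\delta_b$. Interestingly, the matrix $B$ of Lemma~\ref{lem:B} would only enter if we kept the cross term $\delta_a\delta_b$.

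The main point to check is that stopping at first order is legitimate. It is, because the remainder bound only needs fourth moments, which \eqref{eq:moment-ledger} supplies. If one instead expands to second order for uniformity with the linear term, the cross term is
\[
 q_{ab}\,\E[\delta_a\delta_bZ^2]=q_{ab}\Tr(B^2)=q_{ab}\|B\|_{\HS}^2\le Cn^{-4}\cdot 64n ,
\]
by Lemma~\ref{lem:B}. Its spherical analogue is $q_{ab}(\E[\delta_a a])^2/n\le Cn^{-4}\cdot n$ by \eqref{eq:delta-L2}, and the pure second-order terms are handled as above with a fixed moment bound. Either way the total difference is $O(n^{-2})$, which proves \eqref{eq:quadratic-bound}.
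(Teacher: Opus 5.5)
Your proof is correct, and it takes a genuinely different and more economical route than the paper.

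The paper expands $Q$ to \emph{second} order at $(n,n)$. The cubic remainder $Cn^{-5}(|\delta_a|+|\delta_b|)^3$ then costs only $O(n^{-5/2})$. Every term depending on $a$ alone or on $b$ alone cancels exactly through the identity $\E[f(a)Z^2]=\E[f(a)a]=\E[f(a)(Z^\circ)^2]$; your constant-term and linear-term cancellations are special cases of it. The single surviving cross term is controlled by $q_{ab}=O(n^{-4})$, together with $\E[\delta_a\delta_bZ^2]=\norm{B}_{\HS}^2=O(n)$ from Lemma~\ref{lem:B} and $(\E\Delta_X^2)^2/n=O(n)$. So the paper actually gets $O(n^{-5/2})$ for this difference, more than is needed.

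You instead stop at first order and leave the $\delta_a\delta_b$ contribution inside the quadratic remainder. There the crude H\"older bound $n^{-4}\norm{|\delta_a|+|\delta_b|}_4^2\norm{Z}_4^2=O(n^{-2})$, and its conditioned analogue $n^{-5}\norm{|\delta_a|+|\delta_b|}_4^2\norm{a}_2\norm{b}_2=O(n^{-2})$ for $Z^\circ$, exactly meet the $O(n^{-2})$ budget.

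What your route buys is that Lemma~\ref{lem:B}, and with it the general-$M$ quadratic variance inequality, is not used at all for this proposition. The radial bound $\E\Delta_X^2\le 8n$ plus the Carbery--Wright moment comparison suffice. This sits in some tension with the introduction's claim that the matrix estimate is ``required in the quadratic term.'' It is needed only to make this term $o(n^{-2})$, and the final argument never uses that, since the linear, cubic and $\mathcal R_4$ contributions are already of exact order $n^{-2}$.

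One small correction to your closing aside on the second-order variant: the pure terms in $\delta_a^2$ and $\delta_b^2$ do not need a moment bound. They cancel exactly by the same function-of-$a$ (respectively function-of-$b$) identity.
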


\begin{proof}
Let $P_Q$ be the second-order Taylor polynomial of $Q$ at $(n,n)$.
By \eqref{eq:Q-deriv},
\begin{equation}\label{eq:Q-rem}
 |Q(a,b)-P_Q(a,b)|
 \le Cn^{-5}(|\delta_a|+|\delta_b|)^3.
\end{equation}
For the original inner product, H\"older's inequality gives
\begin{align*}
 \E\bigl[|Q-P_Q|Z^2\bigr]
 &\le Cn^{-5}
 \norm{|\delta_a|+|\delta_b|}_6^3\norm Z_4^2.
\end{align*}
The fixed-moment estimates in \eqref{eq:moment-ledger} mean, in the present
calculation, that
\[
 \norm{|\delta_a|+|\delta_b|}_6
 \le \norm{\Delta_X}_6+\norm{\Delta_Y}_6
 \le C\sqrt n,
 \qquad
 \norm Z_4\le C\sqrt n.
\]
Consequently the last display is at most
$Cn^{-5}n^{3/2}n=Cn^{-5/2}$.

For $Z^\circ$, first condition on $a,b$ and use
$\E[(Z^\circ)^2\mid a,b]=ab/n$ from \eqref{eq:circ-moments}.  Then
\begin{align*}
 \E\bigl[|Q-P_Q|(Z^\circ)^2\bigr]
 &\le \frac{C}{n^6}
 \E\bigl[(|\delta_a|+|\delta_b|)^3ab\bigr]\\
 &\le \frac{C}{n^6}
 \norm{|\delta_a|+|\delta_b|}_6^3
 \norm a_4\norm b_4.
\end{align*}
Here $\norm a_4=\norm{R_X}_8^2\le Cn$ and the same bound holds for $b$
by \eqref{eq:Rmom}.  Thus this expectation is also bounded by
$Cn^{-6}n^{3/2}n^2=Cn^{-5/2}$.  Since $n^{-5/2}\le n^{-2}$, the Taylor
remainder contributes at most $Cn^{-2}$ in both cases.

If $f$ is any integrable function of $a$, isotropy and independence give
\begin{equation}\label{eq:pure-quadratic-original}
 \E[f(a)Z^2]
 =\E\left[f(a)X^T(\E YY^T)X\right]
 =\E[f(a)a].
\end{equation}
On the other hand, by \eqref{eq:circ-moments},
\begin{equation}\label{eq:pure-quadratic-circ}
 \E[f(a)(Z^\circ)^2]
 =\E\left[f(a)\frac{ab}{n}\right]
 =\E[f(a)a],
\end{equation}
because $\E b=n$. Write the polynomial explicitly as
\[
 P_Q(a,b)=q_0+q_a\delta_a+q_b\delta_b
 +\frac12q_{aa}\delta_a^2+q_{ab}\delta_a\delta_b
 +\frac12q_{bb}\delta_b^2,
\]
where the coefficients are the corresponding derivatives of $Q$ at
$(n,n)$.  The terms $q_0$, $q_a\delta_a$, and
$q_{aa}\delta_a^2/2$ are functions of $a$ alone, so their expectations
against $Z^2$ and $(Z^\circ)^2$ agree by
\eqref{eq:pure-quadratic-original}--\eqref{eq:pure-quadratic-circ}.  The
same argument applies to the three terms depending only on $b$.  Hence the
difference of the two expectations contains only the term
$q_{ab}\delta_a\delta_b$.

For the original pair,
\begin{equation}\label{eq:quadratic-cross-original}
 \E[\delta_a\delta_bZ^2]
 =\sum_{i,j=1}^n
 \bigl(\E[\Delta_X X_iX_j]\bigr)^2
 =\norm B_{\HS}^2.
\end{equation}
For the pair with independent uniform directions,
\begin{align}
 \E[\delta_a\delta_b(Z^\circ)^2]
 &=\frac1n\E[\delta_a a]\E[\delta_b b]
 \notag\\
 &=\frac{(\E\Delta_X^2)^2}{n}.
 \label{eq:quadratic-cross-circ}
\end{align}
By \eqref{eq:delta-L2} and Lemma~\ref{lem:B}, both quantities in
\eqref{eq:quadratic-cross-original}--\eqref{eq:quadratic-cross-circ} are
$O(n)$. The mixed Taylor coefficient is $O(n^{-4})$ by
\eqref{eq:Q-deriv}. This proves \eqref{eq:quadratic-bound}.
\end{proof}

\subsubsection{The cubic term}

\begin{proposition}\label{prop:cubic}
One has
\begin{equation}\label{eq:cubic-bound}
 \left|
 \E[C_3(a,b)(Z^\circ)^3]-\E[C_3(a,b)Z^3]
 \right|
 \le\frac C{n^2}.
\end{equation}
\end{proposition}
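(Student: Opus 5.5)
The plan follows the pattern of Propositions~\ref{prop:linear} and~\ref{prop:quadratic}, but only a zeroth-order expansion of $C_3$ around $(n,n)$ should be needed, since \eqref{eq:C-deriv} already gives $|C_3|\le Cn^{-3}$ and $|\nabla C_3|\le Cn^{-4}$. First, the spherical term: conditioning on $(a,b)$ and using $\E[(Z^\circ)^3\mid a,b]=0$ from \eqref{eq:circ-moments}, we get $\E[C_3(a,b)(Z^\circ)^3]=0$ exactly. So it remains to show $|\E[C_3(a,b)Z^3]|\le Cn^{-2}$.

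Next, split $C_3(a,b)=C_3(n,n)+\bigl(C_3(a,b)-C_3(n,n)\bigr)$. For the constant part, Lemma~\ref{lem:Z3} gives $0\le\E Z^3\le8n$. Hence
\[
 |C_3(n,n)\E Z^3|\le Cn^{-3}\cdot8n=Cn^{-2}.
\]
For the remainder, the mean value theorem along the segment from $(n,n)$ to $(a,b)$ applies. The cut-off coefficient $C_3$ is smooth on all of $\R^2$, and \eqref{eq:C-deriv} is a global bound on $\nabla C_3$. Together these give
\[
 |C_3(a,b)-C_3(n,n)|\le Cn^{-4}(|\delta_a|+|\delta_b|).
\]
Then H\"older's inequality and \eqref{eq:moment-ledger} yield
\[
 \E\bigl[|C_3(a,b)-C_3(n,n)|\,|Z|^3\bigr]
 \le Cn^{-4}\norm{|\delta_a|+|\delta_b|}_4\norm Z_4^3
 \le Cn^{-4}\cdot\sqrt n\cdot n^{3/2}=Cn^{-2}.
\]
Combining the two parts proves \eqref{eq:cubic-bound}.

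The main point to check is that \eqref{eq:C-deriv} really is a uniform gradient bound on all of $\R^2$, so the mean value step holds off the support of $\chi_n$ as well. This follows from the scaling $C_3=n^{-3}\mathscr C_n(a/n,b/n)$ with $\mathscr C_n$ compactly supported and bounded in $C^1$ uniformly in $n$. I would also note that the exact value $\E Z^3=\norm{\E X^{\otimes3}}_{\HS}^2$ enters only through its size $O(n)$. No cancellation against the spherical side is needed, because the spherical cubic term vanishes identically. The one-derivative expansion is already sharp enough because the cubic coefficient has an extra $n^{-1}$ factor relative to $Q$.
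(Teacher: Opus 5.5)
Your proposal is correct and follows the paper's proof: the spherical term vanishes by \eqref{eq:circ-moments}, the constant part $c_*\E Z^3$ is controlled by Lemma~\ref{lem:Z3}, and the variable part uses the global gradient bound on $C_3$ along the segment from $(n,n)$ to $(a,b)$. The only difference is cosmetic: you pair $\norm{|\delta_a|+|\delta_b|}_4$ with $\norm Z_4^3$ in H\"older, whereas the paper uses Cauchy--Schwarz with $\norm{|\delta_a|+|\delta_b|}_2\norm Z_6^3$. Both give $Cn^{-2}$.
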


\begin{proof}
The term with independent uniform directions is zero by \eqref{eq:circ-moments}. Put
$c_*=C_3(n,n)=c_3(n,n)$. By \eqref{eq:C-deriv},
\begin{equation}\label{eq:c-star}
 |c_*|\le Cn^{-3},
 \qquad
 |C_3(a,b)-c_*|
 \le Cn^{-4}(|\delta_a|+|\delta_b|).
\end{equation}
The second bound follows from the fundamental theorem of calculus along the
line segment from $(n,n)$ to $(a,b)$.  Since $C_3$ was extended smoothly by
zero and \eqref{eq:C-deriv} is a global bound,
\begin{align*}
 C_3(a,b)-C_3(n,n)
 &=\int_0^1 \nabla C_3(n+s\delta_a,n+s\delta_b)
       \mathbin{\cdot}(\delta_a,\delta_b)\,\dd s.
\end{align*}
Taking absolute values and using $|\nabla C_3|\le Cn^{-4}$ gives
\[
 |C_3(a,b)-c_*|
 \le Cn^{-4}(|\delta_a|+|\delta_b|)
\]
for every $a,b\ge0$.

By Lemma~\ref{lem:Z3},
\begin{equation}\label{eq:cubic-constant}
 |c_*\E Z^3|
 \le Cn^{-3}\cdot8n
 \le\frac C{n^2}.
\end{equation}
For the variable coefficient, Cauchy--Schwarz and
\eqref{eq:moment-ledger} give
\begin{align}
 \E\bigl[|C_3-c_*|\,|Z|^3\bigr]
 &\le Cn^{-4}
 \norm{|\delta_a|+|\delta_b|}_2\norm Z_6^3
 \notag\\
 &\le Cn^{-4}\cdot\sqrt n\cdot n^{3/2}
 \le\frac C{n^2}.
 \label{eq:cubic-variable}
\end{align}
Combining \eqref{eq:cubic-constant} and \eqref{eq:cubic-variable} proves
the proposition.
\end{proof}

\subsection{Completion of the upper bound}

\begin{proposition}\label{prop:fluctuation}
One has
\begin{equation}\label{eq:cdf-identity-final}
 \E_\Theta\cJ(\mu_\Theta,\bar\mu)
 \le\frac C{n^2},
\end{equation}
and hence
\begin{equation}\label{eq:fluctuation}
 \E_\Theta W_1(\mu_\Theta,\bar\mu)
 \le\frac Cn.
\end{equation}
\end{proposition}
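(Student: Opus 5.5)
The first bound is an assembly of results already proved. By Lemma~\ref{lem:cdf-comparison},
\[
 \cE_n=\frac12\bigl(\E S_n(a,b,Z^\circ)-\E S_n(a,b,Z)\bigr).
\]
Apply Proposition~\ref{prop:truncated-expansion} to both $\zeta=Z$ and $\zeta=Z^\circ$. Each expectation then equals the expectation of $S_0+L\zeta+Q\zeta^2+C_3\zeta^3$, up to an $O(n^{-2})$ error.

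The terms $S_0(a,b)$ have identical expectations in the two cases, since the pair $(a,b)=(R_X^2,R_Y^2)$ has the same joint law in both. The definitions \eqref{eq:circ} use the same radii, and only the directions change. Hence these terms cancel exactly.

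The remaining differences are bounded separately:
\begin{itemize}
\item the linear term by Proposition~\ref{prop:linear},
\item the quadratic term by Proposition~\ref{prop:quadratic},
\item the cubic term by Proposition~\ref{prop:cubic}.
\end{itemize}
Each contributes at most $C/n^2$. Summing the two remainder errors and the three term differences gives $|\cE_n|\le C/n^2$. Since $\cE_n=\E_\Theta\cJ(\mu_\Theta,\bar\mu)$ is nonnegative by definition \eqref{eq:J-def}, this is \eqref{eq:cdf-identity-final}.

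Before doing this, I would check that $\cJ(\mu_\theta,\bar\mu)$ is well defined. This needs finite third moments of $\mu_\theta$ and $\bar\mu$, which follow from \eqref{eq:Rmom}. The same bound justifies Lemma~\ref{lem:same-independent} and its use of the identity \eqref{eq:cdf-identity}.

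For the Wasserstein bound \eqref{eq:fluctuation}, apply Lemma~\ref{lem:CDF-reduction} for each fixed $\theta$:
\[
 W_1(\mu_\theta,\bar\mu)\le\sqrt\pi\,\cJ(\mu_\theta,\bar\mu)^{1/2}.
\]
Average over $\Theta$ and use Jensen's (equivalently Cauchy--Schwarz's) inequality:
\[
 \E_\Theta W_1(\mu_\Theta,\bar\mu)\le\sqrt\pi\,\bigl(\E_\Theta\cJ(\mu_\Theta,\bar\mu)\bigr)^{1/2}\le\sqrt{\pi C}\,n^{-1}.
\]
This is \eqref{eq:fluctuation}.

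The substantive work is already contained in the Taylor-term propositions. The only point needing care here is bookkeeping. First, the same smooth cutoff $\chi_n$, and hence the same $S_0$, must be used for both $\zeta=Z$ and $\zeta=Z^\circ$, so that the cancellation of the $S_0$ terms is exact rather than only approximate. Second, the small-$n$ case must be handled: Lemma~\ref{lem:sphere-gauss} and the spherical density formulas were proved for $n\ge 6$. For $n<6$, I would simply enlarge $C$, since $W_1(\mu_\theta,\bar\mu)\le \E|\ip X\theta|+\E|\ip X\Theta|\le 2$ by isotropy.
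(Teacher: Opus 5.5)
Your proposal is correct and follows the paper's proof essentially step for step. You combine Lemma~\ref{lem:cdf-comparison} with Proposition~\ref{prop:truncated-expansion}, cancel the $S_0$ terms, and bound the linear, quadratic and cubic differences by Propositions~\ref{prop:linear}--\ref{prop:cubic}. You then use $\cE_n\ge0$ and pass to $W_1$ via Lemma~\ref{lem:CDF-reduction} and Cauchy--Schwarz, exactly as the paper does.

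Your small-$n$ remark is harmless; the paper treats bounded $n$ only in the proof of Theorem~\ref{thm:main}. It covers only the $W_1$ bound, but \eqref{eq:cdf-identity-final} for bounded $n$ is equally trivial: $\cJ(\mu_\theta,\bar\mu)$ is bounded by universal third-moment bounds of one-dimensional log-concave marginals.
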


\begin{proof}
Apply Proposition~\ref{prop:truncated-expansion} to $Z$ and $Z^\circ$.
The $S_0$ terms cancel because the radial pair $(a,b)$ is identical in the
two expectations. Propositions~\ref{prop:linear},
\ref{prop:quadratic}, and \ref{prop:cubic}, together with the
$O(n^{-2})$ Taylor remainders, yield
\[
 \left|
 \E S_n(a,b,Z^\circ)-\E S_n(a,b,Z)
 \right|
 \le\frac C{n^2}.
\]
Lemma~\ref{lem:cdf-comparison} states that
\[
 \cE_n
 =\frac12\left(\E S_n(a,b,Z^\circ)-\E S_n(a,b,Z)\right).
\]
Since $\cE_n\ge0$, the preceding absolute-value estimate implies
\eqref{eq:cdf-identity-final}.  Lemma~\ref{lem:CDF-reduction} gives, for every
$\theta$,
\[
 W_1(\mu_\theta,\bar\mu)
 \le\sqrt\pi\,\cJ(\mu_\theta,\bar\mu)^{1/2}.
\]
Average this inequality over $\theta$. Cauchy--Schwarz gives
\begin{align*}
 \E_\Theta\cJ(\mu_\Theta,\bar\mu)^{1/2}
 &\le (\E_\Theta 1^2)^{1/2}
 \left(\E_\Theta\cJ(\mu_\Theta,\bar\mu)\right)^{1/2}\\
 &=\left(\E_\Theta\cJ(\mu_\Theta,\bar\mu)\right)^{1/2}.
\end{align*}
Using \eqref{eq:cdf-identity-final} proves
\[
 \E_\Theta W_1(\mu_\Theta,\bar\mu)
 \le\sqrt{\pi\cE_n}
 \le\frac Cn.
\]
\end{proof}

\begin{proof}[Proof of Theorem~\ref{thm:main}]
For every direction $\theta$, the triangle inequality gives
\[
 W_1(\mu_\theta,\gamma_1)
 \le W_1(\mu_\theta,\bar\mu)+W_1(\bar\mu,\gamma_1).
\]
Average over $\theta$ and use Proposition~\ref{prop:fluctuation} and
Proposition~\ref{prop:averaged}. This proves \eqref{eq:main} for all
sufficiently large $n$.

For bounded $n$, let $\delta_0$ be the point mass at zero.  The triangle
inequality gives
\[
 W_1(\mu_\theta,\gamma_1)
 \le W_1(\mu_\theta,\delta_0)+W_1(\delta_0,\gamma_1)
 =\E|\ip{X}{\theta}|+\E|G|.
\]
By Cauchy--Schwarz and isotropy,
$\E|\ip{X}{\theta}|\le(\E\ip{X}{\theta}^2)^{1/2}=1$, and
$\E|G|<1$.  Hence the last display is at most two.
Increasing the universal constant completes all remaining dimensions.
\end{proof}

\appendix
\section{Algebraic details}

This appendix repeats the main algebraic calculations in a compact form.
The displayed formulas show how every power of $n$ used in the proof arises.

\subsection{The spherical matrix moment}

Let $e=e_1$. Sign changes of the coordinates make every off-diagonal entry
zero, and rotations of the last $n-1$ coordinates make their diagonal entries
equal. Thus
\[
 \E[|\Theta_1|\Theta\Theta^T]
 =\operatorname{diag}(\lambda_\parallel,
 \lambda_\perp,\ldots,\lambda_\perp).
\]
Formula \eqref{eq:absolute-sphere-moment}, first with $r=1$ and then with
$r=3$, gives
\[
 \lambda_\parallel=\E|\Theta_1|^3=\frac{2m_n}{n+1}.
\]
Taking the trace gives
$\lambda_\parallel+(n-1)\lambda_\perp=m_n$, hence
$\lambda_\perp=m_n/(n+1)$. This proves
\eqref{eq:sphere-matrix} with no asymptotic approximation.

\subsection{The first three coefficients in the spherical formula}

Remove the common factor $m_n$ and write
\[
 \mathscr S(z)
 =r(z)+\frac{(5A+2z)r(z)}{12(n+1)}
 +\frac{D^2}{4(n+1)r(z)},
 \qquad
 r(z)=(A-2z)^{1/2}.
\]
Using
\[
 r'(z)=-r^{-1},
 \quad
 r''(z)=-r^{-3},
 \quad
 r'''(z)=-3r^{-5},
\]
and the corresponding derivatives of $r^{-1}$, differentiation at
$z=0$ yields
\[
 \frac{\mathscr S'(0)}{1!}
 =\frac1{\sqrt A}
 \left[-1-\frac A{4(n+1)}+\frac{D^2}{4(n+1)A}\right],
\]
\[
 \frac{\mathscr S''(0)}{2!}
 =\frac1{2A^{3/2}}
 \left[-1-\frac{3A}{4(n+1)}
 +\frac{3D^2}{4(n+1)A}\right],
\]
and
\[
 \frac{\mathscr S'''(0)}{3!}
 =\frac1{6A^{5/2}}
 \left[-3-\frac{7A}{4(n+1)}
 +\frac{15D^2}{4(n+1)A}\right].
\]
These are exactly \eqref{eq:l-formula}--\eqref{eq:c-formula}.

\subsection{Checking the powers of n}

On the cutoff support, $A\asymp n$, $|D|\le A$, and $m_n\asymp n^{-1/2}$.
Therefore
\[
 L=O(n^{-1}),
 \qquad
 Q=O(n^{-2}),
 \qquad
 C_3=O(n^{-3}).
\]
Each derivative in $a$ or $b$ introduces one additional factor $n^{-1}$.
The mixed second-order Taylor coefficients are consequently
\[
 [\delta_a\delta_b]P_L=O(n^{-3}),
 \qquad
 [\delta_a\delta_b]P_Q=O(n^{-4}).
\]
The estimates proved in Lemmas~\ref{lem:Tu}, \ref{lem:B}, and \ref{lem:Z3} give
\[
 |v|^2=O(n),
 \qquad
 \norm B_{\HS}^2=O(n),
 \qquad
 \frac{(\E\Delta_X^2)^2}{n}=O(n),
 \qquad
 \E Z^3=O(n).
\]
Thus every term is $O(n^{-2})$ before the final square root in
Lemma~\ref{lem:CDF-reduction}.

\section*{Acknowledgments}

The author thanks Professor Hanchao Wang for helpful discussions and
guidance.


\end{document}